\def\crn#1#2{{\vcenter{\vbox{
        \hbox{\kern#2pt \vrule width.#2pt height#1pt
           }
          \hrule height.#2pt}}}}
\newcommand{\stopthm}{\hfill$\square$\medskip}
\newcommand{\pa}{\partial}
\newcommand{\na}{\nabla}
\newcommand{\Ric}{\operatorname{Ric}}
\newcommand{\Sym}{\operatorname{Sym}}
\newcommand{\Lo}{\mathring{L}}
\newcommand{\Span}{\operatorname{span}}
\newcommand{\tr}{\operatorname{tr}}
\newcommand{\R}{\mathbb R}
\newcommand{\C}{\mathbb C}
\newcommand{\ep}{\epsilon}
\newcommand{\al}{\alpha}
\newcommand{\ga}{\gamma}
\newcommand{\si}{\sigma}
\newcommand{\de}{\delta}
\newcommand{\be}{\beta}
\newcommand{\gh}{\widehat{g}}
\newcommand{\gb}{\overline{g}}
\newcommand{\hb}{\overline{h}}
\newcommand{\Hb}{\overline{H}}
\newcommand{\Pb}{\overline{P}}
\newcommand{\Wb}{\overline{W}}
\newcommand{\Xb}{\overline{X}}
\newcommand{\Yh}{\widehat{Y}}
\newcommand{\wt}{\widetilde}
\newcommand{\rh}{\widehat{r}}
\newcommand{\Ft}{\widetilde{F}}
\newcommand{\ft}{\widetilde{f}}
\newcommand{\cEb}{\underline{\mathcal{E}}}
\newcommand{\Ga}{\Gamma}
\newcommand{\Si}{\Sigma} 
\newcommand{\cL}{\mathcal{L}}
\newcommand{\cH}{\mathcal{H}}
\newcommand{\cHh}{\widehat{\mathcal{H}}}
\newcommand{\cF}{\mathcal{F}}
\newcommand{\cE}{\mathcal{E}}
\newcommand{\cK}{\mathcal{K}}
\newcommand{\cI}{\mathcal I}  
\newcommand{\cJ}{\mathcal J}  
\newcommand{\cC}{\mathcal{C}}
\newcommand{\cM}{\mathcal{M}}
\newcommand{\cU}{\mathcal{U}}
\newcommand{\cV}{\mathcal{V}}
\newcommand{\cW}{\mathcal{W}}
\theoremstyle{plain}
\newtheorem{theorem}{Theorem}[section]
\newtheorem{lemma}[theorem]{Lemma}
\newtheorem{proposition}[theorem]{Proposition}
\newtheorem{corollary}[theorem]{Corollary}
\theoremstyle{definition}
\theoremstyle{remark}
\newtheorem{remark}[theorem]{Remark}
\numberwithin{equation}{section}
\title[Higher-Dimensional Willmore Energies via Minimal Submanifold
  Asymptotics]{Higher-Dimensional Willmore Energies via Minimal Submanifold
Asymptotics}
\author[C. Robin Graham and Nicholas Reichert]{C. Robin Graham}
\address{Department of Mathematics, University of Washington,
Box 354350\\
Seattle, WA 98195-4350, USA}
\email{robin@math.washington.edu}
\author[]{Nicholas Reichert}
\email{nwr@uw.edu}
\begin{document}

\begin{abstract}
A conformally invariant generalization of the Willmore energy for compact
immersed submanifolds of even dimension in a Riemannian
manifold is derived and studied.  The energy arises as the coefficient of
the log term in the renormalized area expansion of a minimal submanifold 
in a Poincar\'e-Einstein space with prescribed boundary at infinity.  Its
first variation is identified as the obstruction to smoothness of the
minimal submanifold.  The energy is explicitly 
identified for the case of submanifolds of dimension four.  Variational
properties of this four-dimensional energy are studied in detail when the
background is a Euclidean space or a sphere, including identifications of
critical embeddings, questions of boundedness above and below for various
topologies, and second variation.   
\end{abstract}

\maketitle

\thispagestyle{empty}

\section{Introduction}\label{intro}

The Willmore energy $\int_\Si |H|^2 \,da_\Si$ of a compact immersed 
surface $\Si\subset\R^n$ measures
the total bending of the surface.  A basic property is its conformal
invariance.  In this paper we derive a conformally invariant generalization
of the Willmore 
energy for compact immersed submanifolds $\Si$ of even dimension $k\geq 2$
in a Riemannian manifold $(M,g)$ of dimension $n>k$.  Our energy $\cE$ is
defined via an inductive algorithm which for $k>4$ is prohibitively 
difficult to carry out to obtain explicit formulae.  However, we do derive
the formula for the $k=4$ energy, which we use to study some of its basic 
variational properties when $(M,g)$ is a Euclidean space or a sphere.        

Our energy arises upon consideration of a Plateau problem at infinity
for minimal submanifolds of dimension $k+1$ of an asymptotically
Poincar\'e-Einstein space $(X,g_+)$ of dimension $n+1$ whose boundary at   
infinity is $(M,g)$.  In case $(M,g)$ is a Euclidean space or a  
sphere, $(X,g_+)$ is the corresponding half-space or ball model of 
hyperbolic space.  Existence theory for minimal currents 
in the case that $g_+$ is hyperbolic is discussed in \cite{A1}, \cite{A2}.
Here we are concerned with formal asymptotics:   
we search for a submanifold $Y^{k+1}\subset \Xb$ satisfying 
$Y\cap M = \Si$, which is minimal to high order at infinity.  It turns out
that the minimality 
condition uniquely determines the Taylor expansion of $Y$ to order $k+2$,
at which point there is generically an obstruction $\cH\in \Gamma(N\Si)$ to
existence of a smooth $Y$.  Here $N\Si$ denotes the normal bundle to $\Si$
in $M$.

The area of any such asymptotically minimal $Y$ is infinite.  However,
finite quantities can be obtained by consideration of an asymptotic
expansion of the area.  One takes $X=M\times (0,\ep_0)_r$ near infinity and 
writes the Poincar\'e-Einstein metric in normal form relative to $g$ as 
$$
g_+=\frac{dr^2+g_r}{r^2},
$$
where $g_r$ is a 1-parameter family of metrics on $M$ satisfying $g_0=g$.  
The area of $Y\cap \{r>\ep\}$ has an asymptotic expansion in $\epsilon$,
and the generalized Willmore energy $\cE$ is defined to be the coefficient
of $\log\frac{1}{\epsilon}$ in the renormalized area expansion.  This turns 
out to be expressible as an integral over $\Si$ of a scalar invariant
of the local submanifold geometry of $\Si$ in $(M,g)$.  The integrand
involves derivatives of the second fundamental form of order up to
$(k-2)/2$.  

This area renormalization procedure was described in \cite{GW} and has
been studied in different contexts by various authors.  A main focus  
has been the constant term in the renormalized area expansion, usually
called the renormalized area.  When $k$ is odd, the renormalized area is a
global invariant of a minimal submanifold of a Poincar\'e-Einstein
space and there is no $\log\frac{1}{\epsilon}$ term in the expansion.  
The primary interest of \cite{GW} was the anomaly for the  
renormalized area when $k$ is even, measuring its failure to be 
conformally invariant under rescaling of $g$.  The paper \cite{GW} did
point out the conformal invariance of the coefficient of the 
$\log\frac{1}{\epsilon}$ term and identified it as the Willmore 
energy when $k=2$.  The contribution of the present paper, then, is to 
follow up with further analysis of this energy for $k>2$, particularly
from the point of view of regarding it as a generalization of the $k=2$
Willmore energy.  

As is well-known and described in \cite{Gr1}, there is an analogous
renormalization procedure for the volume of the asymptotically
Poincar\'e-Einstein manifold $(X,g_+)$ itself.  In this case, when $n$ is
even, the coefficient of the $\log\frac{1}{\epsilon}$ term in the expansion
is a conformal invariant of $(M,g)$ which equals a multiple of 
the integral of Branson's $Q$-curvature.  A basic result (\cite{HSS},
\cite{GH}) in this setting is that 
the metric variation of this coefficient is a multiple of the ambient 
obstruction tensor, which is a multiple of the coefficient of the first
$\log$ term in the expansion of $g_r$.  In Theorem~\ref{variation}, we
prove the analogous 
result for the generalized Willmore energy $\cE$: its variational
derivative with respect to variations of $\Si$ is the negative of the 
obstruction field $\cH$.  In particular, this identifies the Euler-Lagrange
equation for the energy $\cE$ as the equation $\cH=0$.  As a consequence,
in Proposition~\ref{minimalcritical} we deduce that if $(M^n,g)$ is
Einstein and $\Si$ is a minimal submanifold, then $\Si$ is critical for
$\cE$.  This 
generalizes a well-known property of the $k=2$ Willmore energy and produces
many examples of $\cE$-critical manifolds.  This can be viewed as an 
analogue in this setting of the fact that the ambient obstruction 
tensor vanishes for Einstein metrics.  

In Corollary~\ref{Eformcor}, we identify explicitly the $k=4$ energy $\cE$
for general background $(M^n,g)$ with $n\geq 5$.  When $M=\R^n$ with the 
Euclidean metric, our energy simplifies to:
\begin{equation}\label{Eeuclidean}
\cE = \frac{1}{128}\int_\Si \Big( |\nabla H|^2 
-|L^t H|^2 +\frac{7}{16}|H|^4 \Big) 
\,da_\Si.
\end{equation}
Here $L:S^2 T\Si\rightarrow N\Si$ is the second fundamental form
and $H=\tr L\in \Ga(N\Si)$ is the mean curvature vector.  
$L^t:N\Si\rightarrow S^2T\Si$ denotes the dual linear transformation and
$\nabla:\Ga(N\Si)\rightarrow \Ga(T^*\Si\otimes N\Si)$ the normal bundle
connection induced by the Levi-Civita connection of $g$.  This energy was
derived for $\Si^4\subset \R^5$   
in \cite{Gu2} by calculating how various quantities transform under  
conformal motions of $\R^5$ and searching for a linear combination which is 
conformally invariant.  However, that derivation dropped a factor of 
$-2$ in the calculations, so ended up with incorrect coefficients for 
$|L^tH|^2$ and $|H|^4$; compare \eqref{Eeuclidean} above with (61) of
\cite{Gu2}.  When $M=S^n$ with the round metric of sectional curvature 1,
our general formula reduces to:  
\begin{equation}\label{Esphere}
\cE = \frac{1}{128} \int_\Si \Big( |\nabla H|^2 -|L^t H|^2 
+\frac{7}{16} |H|^4 +6|H|^2 +48\Big) \,da_\Si.
\end{equation}

There are parallels that suggest that $\cE$ should be regarded 
as the correct $k=4$ analogue of the $k=2$ Willmore energy, and that
$\cE$-critical submanifolds of $\R^n$ or $S^n$ are analogues of  
Willmore surfaces.  We mentioned above that $\cE$ shares with the $k=2$
Willmore energy the fact that minimal submanifolds of $\R^n$ or $S^n$ are
critical.  For $k=4$ this is evident from the fact that all the 
nonconstant terms in the integrands of \eqref{Eeuclidean} and
\eqref{Esphere} are at least quadratic in $H$.  So, as examples of 
$4$-dimensional $\cE$-critical manifolds we have:  totally geodesic  
$S^4\subset S^n$, or any round $S^4\subset \R^n$, or the usual 
minimal 4-dimensional products of spheres in $S^n$.  The latter give rise 
to $\cE$-critical ``anchor rings'' in Euclidean spaces via stereographic
projection.  In
light of the interest of the Willmore conjecture, it is natural to ask
about the behavior of $\cE$ as one varies over immersions of $\Si^4$ having
the topologies of these $\cE$-critical anchor rings.  We show:
\begin{proposition}\label{no}
$\cE$ is unbounded above and below over embeddings of any of the following: 
\begin{gather*}
S^2\times S^2 \subset S^5  \\
S^1\times S^3\subset S^5 \\
S^1\times S^1\times S^2\subset S^6 \\
S^1\times S^1\times S^1\times S^1\subset S^7. 
\end{gather*}
\end{proposition}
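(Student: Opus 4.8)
The plan is to exhibit, for each of the four submanifolds, one family of embeddings along which $\cE\to-\infty$ and another along which $\cE\to+\infty$.  In the formula \eqref{Esphere} the only term that is not manifestly nonnegative is $-|L^tH|^2$, so the two families will be engineered so that two different terms of the integrand dominate: for the lower bound, collapse a factor of a generalized Clifford torus so that $|L^tH|^2$ overwhelms $\tfrac{7}{16}|H|^4$; for the upper bound, graft onto an arbitrary embedding a localized, low-amplitude, high-frequency ripple so that the positive term $|\nabla H|^2$ overwhelms everything else.  Neither operation changes the diffeomorphism type.

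\emph{Unboundedness below.}  For $a_1,\dots,a_m>0$ with $\sum_i a_i^2=1$ let $\Si_{\vec a}=S^{p_1}(a_1)\times\cdots\times S^{p_m}(a_m)$, a homogeneous embedded submanifold of $S^{p_1+\cdots+p_m+m-1}$.  The four cases of the proposition correspond to $(p_1,p_2)=(2,2)$ and $(1,3)$ with ambient $S^5$, to $(p_1,p_2,p_3)=(1,1,2)$ with ambient $S^6$, and to $(p_1,p_2,p_3,p_4)=(1,1,1,1)$ with ambient $S^7$.  Since $\Si_{\vec a}$ is homogeneous with parallel second fundamental form, $\nabla H=0$ and all of the pointwise invariants in \eqref{Esphere} are constant on it; from the standard description of the second fundamental form of such a product in the round sphere one computes, with $k=\sum_i p_i=4$,
\[
|H|^2=\sum_i\frac{p_i^2}{a_i^2}-k^2,\qquad
|L^tH|^2=\sum_i p_i\Big(\frac{p_i}{a_i^2}-k\Big)^2,\qquad
\Vol(\Si_{\vec a})=\prod_i\omega_{p_i}a_i^{p_i},
\]
where $\omega_p=\Vol(S^p)$.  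Now fix an index $m$ with $p_m\le 2$ (each of the four cases has one), keep the other $a_i$ bounded away from $0$, and let $a_m\to0$.  Then $|H|^2=p_m^2a_m^{-2}+O(1)$ and $|L^tH|^2=p_m^3a_m^{-4}+O(a_m^{-2})$, so the integrand of \eqref{Esphere} equals
\[
\frac{p_m^3}{a_m^{4}}\Big(\tfrac{7}{16}\,p_m-1\Big)+O(a_m^{-2}),
\]
with $\tfrac{7}{16}p_m-1<0$ exactly because $p_m\le 2<16/7$.  As $\Vol(\Si_{\vec a})=c\,a_m^{p_m}\bigl(1+o(1)\bigr)$ with $c>0$, integration yields $\cE(\Si_{\vec a})=-c'\,a_m^{p_m-4}\bigl(1+o(1)\bigr)\to-\infty$.  (This also shows why the Clifford family cannot be used for the upper bound in these cases: every available factor has dimension $\le 2$.)

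\emph{Unboundedness above.}  Begin with any embedding of the given submanifold in the appropriate sphere, fix a point, and in a coordinate ball $B$ write the submanifold as the graph of a map $u$ into the normal coordinate directions.  Replace $u$ by $u+\lambda^2\varphi\,\sin(x_1/\lambda)\,e$, where $\varphi$ is a fixed cutoff supported in $B$ with $\varphi\equiv1$ on a smaller ball, $e$ is one fixed normal coordinate direction, and $\lambda\to0$.  This perturbation is $C^1$-small and supported in the fixed ball $B$, so the result is again an embedding of the same diffeomorphism type, agreeing with the original outside $B$.  On $B$ the added second fundamental form is $O(1)$, so $|H|$ and $|L|$ — hence $|L^tH|^2$, $|H|^4$ and $|H|^2$ — stay bounded independently of $\lambda$, while $|\nabla H|^2$ acquires a contribution of size $\lambda^{-2}\cos^2(x_1/\lambda)$ whose integral over $B$ is of order $\lambda^{-2}$.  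Since the only negative term in \eqref{Esphere} is $-|L^tH|^2$, now bounded below on all of $\Si$, we get $\cE\ge\frac1{128}\int_\Si|\nabla H|^2\,da-C\to+\infty$.  (Alternatively, for $S^1\times S^3\subset S^5$ one may collapse instead the three-dimensional factor of the Clifford family above: since $3>16/7$ the leading coefficient becomes positive and $\cE\to+\infty$ directly.)

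The ripple half of the argument is soft; the real work lies on the side of the lower bound, namely in computing $|H|^2$ and $|L^tH|^2$ for the product tori $\Si_{\vec a}$ inside the round sphere and then tracking the leading order of $(\text{integrand})\times\Vol(\Si_{\vec a})$ carefully enough to see both that its sign is governed by whether the collapsing factor has dimension $<16/7$ and that the volume collapse $a_m^{p_m}\to0$ is too slow to absorb the $a_m^{-4}$ growth of the integrand.
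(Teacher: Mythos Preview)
Your argument is correct, and the lower-bound half is essentially the paper's own approach, streamlined: the paper computes the energy of the Clifford families $\prod_i S^{p_i}(r_i)$ explicitly case by case (via parametrizations and, in the higher-codimension cases, \emph{Mathematica}), and reads off unboundedness below from the resulting formulas, whereas you extract only the leading asymptotics $\frac{p_m^3}{a_m^4}\bigl(\tfrac{7}{16}p_m-1\bigr)$ from the closed formulas for $|H|^2$ and $|L^tH|^2$ on a general product of round spheres.  Those formulas are correct, and the observation that the sign is governed by whether the collapsing factor has dimension $<16/7$ is a nice way to package the four cases at once.

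Where you genuinely diverge from the paper is the upper bound.  The paper does not use any soft perturbation argument: for $S^1\times S^3$, $S^1\times S^1\times S^2$, and $(S^1)^4$ the explicit Clifford-family formulas are already unbounded above (this is your parenthetical alternative, noting that collapsing a factor of dimension $3>16/7$ flips the sign).  For $S^2\times S^2$ the Clifford family has a global maximum, so the paper instead stereographically projects to an anchor ring in $\R^5$, applies a non-isotropic dilation $\delta_a$, and carries out a rather involved \emph{Mathematica} computation of the energy of $\delta_a(T^{2,2}_{\sqrt{2},1})$ to obtain $\cE\sim\frac{2\pi^2}{35}a^4\to+\infty$.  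Your high-frequency ripple argument replaces all of this with a single mechanism that works uniformly: since $|\nabla H|^2$ is the unique term in \eqref{Esphere} involving three derivatives of the immersion and the only negative term $-|L^tH|^2$ involves at most two, a $C^1$-small oscillation of amplitude $\lambda^2$ and frequency $\lambda^{-1}$ keeps $L$ and $H$ bounded while forcing $\int|\nabla H|^2\sim\lambda^{-2}$.  What you lose is the explicit asymptotic constant; what you gain is a topology-independent argument that bypasses the heavy computation entirely.
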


\noindent
Proposition~\ref{no} rules out the most naive formulations of a
4-dimensional Willmore Conjecture for $\cE$.  As we
discuss in \S\ref{otherenergies} (see also \cite{Gu2}), it is possible to
modify $\cE$ to obtain non-negative 
conformally invariant energies by adding appropriate multiples of the 
fourth power of the norm of the trace-free 
second fundamental form.  But then one loses the property that minimal
submanifolds of $\R^n$ and $S^n$ are critical.  This seems to us an
important geometric property of a higher-dimensional energy to be regarded
as a true analog of the Willmore energy.       

We do not know whether $\cE$ is bounded below over immersions of
$S^4$ in $\R^5$.  It is unbounded above:  we used {\it
Mathematica} to calculate $\cE$ explicitly for the family of ellipsoids in
$\R^5$ with axes of length $(1,1,1,1,a)$ with $a>0$.  {From} this one can 
deduce that $\cE\rightarrow \infty$ as $a\rightarrow 0$ and as
$a\rightarrow \infty$.  Moreover, a numerical plot suggests that $\cE$ is
convex as a function of $a$ and has a unique minimum at $a=1$,
corresponding to the round $S^4$. 
As far as we know, it is plausible that a round $S^4\subset \R^5$ minimzes
$\cE$ over all immersions of $S^4$ in $\R^5$; it would be interesting to
resolve this question.  Locally this is the case:  

\begin{proposition}\label{2varS4}
The second variation of $\cE$ at a round $S^4\subset \R^5$ is nonnegative,
and is positive in directions transverse to the orbit of the conformal
group.   
\end{proposition}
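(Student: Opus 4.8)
The plan is to use the conformal invariance of $\cE$ to transport the round $S^4\subset\R^5$ to a totally geodesic $S^4$ in the round $S^5$, where the second variation reduces to an elementary spherical-harmonic computation. A conformal diffeomorphism carries the round $S^4\subset\R^5$ to a totally geodesic $\Si=S^4\subset S^5$, and since $\cE$ is a conformal invariant the induced linear isomorphism on normal variation fields (conformal maps preserve orthogonality, altering only lengths) identifies the second-variation quadratic form at the round $S^4$ with that at $\Si$, and carries the conformal orbit to the conformal orbit; hence nonnegativity and the location of the kernel transfer. By Proposition~\ref{minimalcritical}, $\Si$ is $\cE$-critical, so $\de^2\cE$ is a well-defined quadratic form on normal fields; as $N\Si$ is a trivial, flat line bundle with a parallel unit section $\nu$, it suffices to treat $V=\varphi\,\nu$ with $\varphi\in C^\infty(S^4)$.

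Next I would compute $\de^2\cE$ directly from \eqref{Esphere}. At $\Si$ one has $L=0$ and $H=0$, so along any variation $|L^tH|^2$ and $|H|^4$ vanish to fourth order and do not contribute to the second variation; only the constant term $48$, the term $6|H|^2$, and the term $|\nabla H|^2$ survive, giving
\begin{equation*}
\de^2\cE(V,V)=\tfrac{1}{128}\Big(48\,\de^2\Vol(V,V)+12\!\int_\Si|\dot H|^2\,da_\Si+2\!\int_\Si|\nabla^\perp\dot H|^2\,da_\Si\Big),
\end{equation*}
where $\dot H$ denotes the linearized mean curvature vector along a variation with velocity $V$. For a totally geodesic submanifold of a space of constant sectional curvature $1$, the standard formulas give $\dot H=\Delta^\perp V+4V$ and $\de^2\Vol(V,V)=\int_\Si\big(|\nabla^\perp V|^2-4|V|^2\big)\,da_\Si$.

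Then I would diagonalize. Writing $\varphi$ as a spherical harmonic of degree $l$ on $S^4$, so $-\Delta\varphi=\mu_l\varphi$ with $\mu_l=l(l+3)$, one finds $\dot H=(4-\mu_l)V$ and $\int_\Si|\nabla^\perp V|^2\,da_\Si=\mu_l\int_\Si|V|^2\,da_\Si$, hence
\begin{equation*}
\de^2\cE(V,V)=\tfrac{1}{128}(\mu_l-4)\big(48+12(\mu_l-4)+2\mu_l(\mu_l-4)\big)\,\|V\|_{L^2}^2=\frac{\mu_l(\mu_l+2)(\mu_l-4)}{64}\,\|V\|_{L^2}^2.
\end{equation*}
Since distinct eigenspaces are $\de^2\cE$-orthogonal, this already shows $\de^2\cE\geq0$, with equality exactly on the span of the modes of degree $0$ and $1$ (where $\mu_l=0$ and $\mu_l=4$), a subspace of dimension $1+5=6$.

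Finally I would identify this $6$-dimensional kernel with the tangent space of the conformal orbit of $\Si$. That orbit has dimension $\dim O(6,1)-\dim(\text{stabilizer})=21-15=6$, and its tangent space at $\Si$ consists of the normal components along $S^4$ of the conformal Killing fields of $S^5$; inspecting these, the infinitesimal rotations that tilt the great sphere $S^4$ contribute precisely the degree-one spherical harmonics, while the conformal gradient field dual to the polar direction contributes the constants, so these normal components span exactly the modes of degree $\le1$. Thus $\ker\de^2\cE=T_\Si(\text{conformal orbit})$, and $\de^2\cE$ is strictly positive in every direction transverse to the orbit. The principal difficulty is the bookkeeping: pinning down the exact numerical factors in the term-by-term second variation and in the linearized mean curvature, and then carefully matching the $6$-dimensional kernel with the conformal directions by decomposing the conformal Killing fields of $S^5$ along $S^4$.
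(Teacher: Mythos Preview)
Your proof is correct and follows essentially the same route as the paper: transfer to the totally geodesic $S^4\subset S^5$ by conformal invariance, compute the second variation from \eqref{Esphere} using $\dot H=-JV$ with $J=-\Delta-4$, diagonalize on spherical harmonics, and identify the $6$-dimensional kernel with the conformal directions $\cK_{S^4}\oplus\cC_{S^4}$. The only packaging difference is that the paper first records the general identity $\cJ=2J(J+4)(J+6)$ for the second variation at any minimal hypersurface in $S^5$ (Proposition~\ref{2varE}) before specializing, whereas you compute directly with $L=0$; your eigenvalue $\mu_l(\mu_l+2)(\mu_l-4)/64$ for $\cE$ agrees with the paper's $2J(J+4)(J+6)$ for $\cEb=128\cE$.
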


It might be interesting to study the boundedness properties of $\cE$ 
over immersions of $S^1\times S^1\times S^2$ or $\big(S^1\big)^4$ in $S^5$.   
Likewise, to study variational properties of $\cE$ for $\Si$ with other 
topologies, for instance, $\Si = \C\mathbb{P}^2$.  
The Veronese embedding $\C\mathbb{P}^2\rightarrow S^7$ is minimal; hence 
$\cE$-critical.  We would like to think, without real concrete evidence,
that there should be some interesting variational problems for the energy
$\cE$, perhaps of min-max type.  

Gover and Waldron have developed a program to study conformal hypersurface
geometry based on the the singular Yamabe problem and tractor calculus
(\cite{GoW1}, \cite{GoW2}, \cite{GoW3}, \cite{GGHW}, \cite{GoW5}).  This
includes the derivation of conformally invariant energies for
hypersurfaces in both parities of dimension.  These energies were made
explicit in terms of underlying geometry for hypersurfaces of dimension 2
and 3.  In \cite{V}, Vyatkin used 
tractor methods to derive an explicit conformally invariant energy for 4
dimensional 
hypersurfaces in conformally flat 5-manifolds.  In \S\ref{otherenergies},
we relate his energy to $\cE$ in the case when the background manifold is
$\R^5$.  Conformally invariant hypersurface energies based on volume
renormalization of singular Yamabe metrics are defined and analyzed in
\cite{Gr2}, \cite{GoW4}.    

The paper \cite{Z} of Y. Zhang was posted while this paper was in final
preparation.  
Zhang also studies the expansion of minimal submanifolds of
Poincar\'e-Einstein spaces through the terms of order 4, derives a formula
for $\cE$ for $k=4$, and obtains the critical points we
derive in \S\ref{prod} for products of spheres in Euclidean spaces.     

This paper is organized as follows.  
In \S\ref{formulation}, we review the formal asymptotics of 
minimal submanifolds of Poincar\'e-Einstein spaces up to the 
order of the locally undetermined term in the expansion.
The treatment in
\cite{GW} was incomplete in that it derived non-invariant asymptotic
expansions in local coordinates but provided no explanation 
of how to formulate the results globally.  Such a global formulation is   
needed to construct the renormalized area expansion.  In
Theorem~\ref{U}, we provide an invariant  
formulation of the asymptotics using the normal exponential map of the
boundary submanifold.  All the coefficients in the expansion are
invariantly defined sections of the normal bundle of $\Si$ determined by
its geometry as a submanifold of $(M,g)$.  We conclude \S\ref{formulation}
by showing that the obstruction field $\cH\in \Gamma(N\Si)$, which arises
as the coefficient of the first log term in the expansion, is invariant
under conformal rescalings of $g$.   

In \S\ref{energysection}, we consider the renormalized area expansion,
define  
the energy $\cE$, and show that it is conformally invariant.  We then prove
that the first variation of $\cE$ is the negative of the obstruction field 
$\cH$, and deduce that a minimal $\Si$ is $\cE$-critical if $(M,g)$ is
Einstein.  

In \S\ref{formulas}, we derive formulas for the 
expansion of the minimal submanifold and the renormalized area  
through order 
4.  This gives formulas for $\cE$ for $k=2$, $4$ and 
and for $\cH$ for $k=2$.  We use a formalism of Guven to identify 
$\cH$ for $k=4$ when the background is a Euclidean space.  
These formulas for $\cH$ are in particular the negatives of formulas 
for the first variation of $\cE$ in the cases $k=2$, $4$.   
We conclude \S\ref{formulas} by commenting on the nonzero  
leading terms in the expansion coefficients, the obstruction field, and the
energy.   

In \S\ref{4d}, we analyze $\cE$ when $\dim\Si=4$.   In \S\ref{prod} we
calculate explicitly the energy of products of spheres in $S^n$ as a
function of the 
radii, and use the resulting formulas to identify $\cE$-critical embeddings  
of products of spheres.  This gives examples of some non-minimal
$\cE$-critical embeddings.  
We also make a remarkable observation
concerning the relationships between these energy formulas for the
different topologies.  
In \S\ref{anchor}, we stereographically   
project products of spheres to obtain 4-dimensional anchor rings in
Euclidean spaces, thereby obtaining $\cE$-critical anchor rings.  We also
analyze the energy of a non-isotropically dilated family of such anchor
rings to show that $\cE$ is unbounded above over embeddings of $S^2\times
S^2$ in $S^5$.  This combined with the results in \S\ref{prod} enable us  
to prove Proposition~\ref{no}.  In \S\ref{2var}, we derive a general
formula for the second variation of $\cE$ at a minimal immersed
hypersurface in $S^5$, precisely generalizing the corresponding
formula derived in \cite{W} for the classical Willmore energy.  We apply
this formula to $S^4\subset S^5$, thereby proving Proposition~\ref{2varS4},
and also to the standard minimal embedding $S^2\times S^2\subset S^5$. 
Finally, in \S\ref{otherenergies} we discuss other energies obtained by
modifying $\cE$ by adding conformally invariant expressions.  
In particular, we construct non-negative energies and we derive the
relationship mentioned above between $\cE$ and Vyatkin's energy.  

\bigskip
\noindent
{\it Acknowledgements.}  Research of CRG was partially supported by NSF
grant \# DMS 1308266.  NR held a postdoctoral position at the University of
Washington while this work was carried out.  His research was also
supported by NSF RTG grant \#  DMS 0838212 and a PIMS Postdoctoral
Fellowship.  He is grateful to all these institutions for their support.  

\section{Notation and Conventions}\label{notation}
For a Riemannian manifold $(M^n,g)$, we denote the Levi-Civita connection 
by ${}^M\nabla$, the curvature tensor by 
$R_{ijkl}$, the Ricci tensor by $\Ric(g)$ or $R_{ij}=R^k{}_{ikj}$, and the
scalar curvature by $R=R^i{}_i$.  Our sign convention for $R_{ijkl}$ is
such that spheres have  
positive scalar curvature.  $S^n(r)$ denotes the Euclidean sphere of
dimension $n$ and radius $r$, and the notation $S^n$ is used for 
$S^n(1)$.  The Schouten tensor of $(M,g)$ is 
$$
P_{ij}=\frac{1}{n-2}\Big(R_{ij}-\frac{R}{2(n-1)}g_{ij}\Big) 
$$
and the Weyl tensor is defined by the decomposition
\begin{equation}\label{curv}
R_{ijkl}=W_{ijkl}+P_{ik}g_{jl}-P_{jk}g_{il}-P_{il}g_{jk}+P_{jl}g_{ik}.  
\end{equation}
The Cotton and Bach tensors are
$$
C_{ijk}={}^M\nabla_kP_{ij}-{}^M\nabla_jP_{ik}
$$
and 
$$
B_{ij}={}^M\nabla^kC_{ijk}-P^{kl}W_{kijl}. 
$$
In invariant expressions such as these, each Latin index $i$, $j$, $k$ can
be interpreted as a label for $TM$ or its dual.  

$\Si$ will denote an immersed submanifold of $(M,g)$ of dimension $k$ via
an immersion 
$f:\Si\rightarrow M$.  The pullback bundle $f^*TM$ splits as
$f^*TM=T\Si\oplus N\Si$.  We use $\al$, $\be$, $\ga$ as index labels for
$T\Si$ and $\al'$, $\be'$, $\ga'$ for $N\Si$.  A Latin index $i$ for an
element or section of $f^*TM$ thus corresponds to a pair $(\al,\al')$.  So,
for instance, when restricted to $\Si$, the Schouten tensor $P_{ij}$ splits 
into its tangential $P_{\al\be}$, mixed $P_{\al\al'}$, and normal
$P_{\al'\be'}$ pieces.  Likewise, the restriction of the metric $g_{ij}$ to
$\Si$ can be identified with the metric $g_{\al\be}$ induced on $\Si$
together with the metric $g_{\al'\be'}$ induced on $N\Si$.  We routinely
use $g_{\al\be}$ and $g_{\al'\be'}$ and their inverses to lower and raise  
unprimed and primed indices.

The second fundamental form is $L:S^2T\Si\rightarrow N\Si$, defined by  
$L(X,Y)=({}^M\na_X Y)^\perp$.  We typically write it as
$L_{\al\be}^{\al'}$, or perhaps as $L_{\al\be\al'}$ or
$L_\al{}^\be{}_{\al'}$ upon lowering and/or raising indices.  Since $L$ has 
only one primed index and 
is symmetric in $\al\be$, it is not necessary to pay 
attention to the order of the three indices.  The mean curvature vector is 
$H=\tr L$, i.e. the section of $N\Si$ given by   
$H^{\al'}=g^{\al\be}L_{\al\be}^{\al'}=L_{\al}{}^{\al\al'}$.   

The Levi-Civita connection on $M$ induces connections on $T\Si$ and 
$N\Si$ together with their duals and tensor products, all of which we
denote $\nabla$.  So, for instance, we can form the covariant
derivative $\na_\al H^{\al'}$, which is a section of $T^*\Si\otimes N\Si$.
A point which requires 
some attention is that if we have a tensor on $M$ defined near $\Si$ (such
as the Schouten tensor $P_{ij}$), we can form its covariant derivative 
${}^M\na_kP_{ij}$ and then consider on $\Si$ a piece of this tensor such  
as ${}^M\na_\al P_{\al'\be}$.  Alternately, we can first consider on $\Si$
the piece  
$P_{\al'\be}$, which is a section of $N^*\Si\otimes T^*\Si$, and then
differentiate with respect to the induced connection to obtain $\na_\al
P_{\al'\be}$.  Of 
course, these are different sections of $T^*\Si\otimes N^*\Si\otimes
T^*\Si$; the distinction is indicated by the ${}^M$ specifying the
connection used.     

Norms are always taken with respect to the metric on tensor products
induced by the metric on the underlying bundle.  So, for instance, 
in \eqref{Eeuclidean}, we have 
\[
\begin{gathered}
|\na H|^2 = \na_\al H^{\al'}\na^\al H_{\al'}\\
|L^t H|^2 = L_{\al\be}^{\al'}L^{\al\be}_{\be'}H_{\al'}H^{\be'}\\
|H|^4 = \big( H_{\al'}H^{\al'}\big)^2.
\end{gathered}
\]

We often compute in local coordinates.  We always use a  
coordinate system $\{(x^\al,u^{\al'}):1\leq \al\leq k, 1\leq \al'\leq  
n-k\}$ for $M$ near $\Si$, with the properties that 
$\Si=\{u^{\al'}=0\}$ and $\pa_\al\perp \pa_{\al'}$ on $\Si$.  Hence, on
$\Si$, the 
$\pa_\al$ span $T\Si$, the $\pa_{\al'}$ span $N\Si$, and the mixed 
metric components $g_{\al\al'}$ vanish.  So our use of indices for
coordinates is consistent with the abstract interpretation described
above.  When computing in local coordinates, partial derivatives are
expressed using either of the two notations $\pa_\al u_\be = u_{\be,\al}$.   

Our sign convention for Laplacians is that $\Delta=\sum \pa_i^2$ on
Euclidean space.  

When dealing with embedded submanifolds, as in \S\ref{formulation}, we 
typically identify $\Si$ with its image and suppress mention of the
immersion $f$.

\section{Formal Asymptotics}\label{formulation}

Let $(M^n,[g])$ be a conformal manifold, $n\geq 2$, and $g$ a chosen metric
in the conformal class.  By a Poincar\'e metric $g_+$ in normal
form relative to $g$, we will mean a metric $g_+$ on $X=M\times (0,\ep_0)_r$,
for some $\ep_0>0$, of the form  
\begin{equation}\label{normalform}
g_+=\frac{dr^2+g_r}{r^2},
\end{equation}
where $g_r$ is a smooth 1-parameter family of metrics on $M$ for which
$g_0=g$, and satisfying
$$
\Ric(g_+)+ng_+ = O(r^{n-2}).
$$
These conditions uniquely determine the Taylor expansion of $g_r \mod
O(r^n)$, and it is even to this order (\cite{FG}).  The form of the
expansion changes  
at order $n$ for solutions to higher order, but that will not be relevant
here.  Set $\gb=r^2g_+=dr^2+g_r$.  We identify $M$ with $M\times \{0\}$,
and view $M=\pa X$ as the 
boundary at infinity relative to $g_+$.   In case $M=\R^n$ and $g=|dx|^2$
is the Euclidean metric, $g_r=g$ is constant in $r$, and $g_+$ is the
upper-half space realization of the hyperbolic metric.  

In this section we consider local geometry of embedded submanifolds of $M$.
In the next section we will 
construct global invariants of immersed submanifolds by integration of
the local invariants derived here.  

Let $\Sigma\subset M$ be an embedded submanifold of
dimension $k$, $2\leq k<n$, with $k$ even.  We consider the formal  
asymptotics of embedded submanifolds $Y^{k+1}\subset \Xb=M\times 
[0,\ep_0)$ with $\pa Y =\Sigma$ which are minimal with respect to $g_+$. 
Such a submanifold can be described invariantly in terms of a 1-parameter
family of sections of the $g$-normal bundle $N\Sigma$ of $\Si$ in $M$ as
follows.     

The normal 
exponential map of $\Si$ with respect to $g$, denoted $\exp_\Si$, defines a 
diffeomorphism from a neighborhood of the zero  
section in $N \Sigma$ to a neighborhood of $\Sigma$ in $M$.  Let 
$Y^{k+1}\subset M\times [0,\ep_0)$ be a smooth submanifold which is
transverse to $M$ and satisfies $Y\cap M = \Sigma$.  For $r\geq 0$ small,
let $Y_r\subset M$ denote the slice of $Y$ at height $r$, 
defined by $Y\cap (M\times \{r\})=Y_r\times \{r\}$.  Then $Y_r$ is a smooth 
submanifold of $M$ of dimension $k$ and $Y_0=\Sigma$.  There is a unique
section $U_r\in\Gamma(N\Sigma)$ so that $\exp_\Si \{U_r(p):p\in\Si\} =
Y_r$.   This defines a smooth 1-parameter family $U_r$ of
sections of $N \Sigma$ for which 
\begin{equation}\label{Y}
Y=\left\{\big(\exp_\Si U_r(p),r\big): p\in \Sigma, r\geq 0\right\}.
\end{equation}
In particular, $U_0=0$.  The submanifolds $Y\subset X$ which we consider 
will all be 
orthogonal to $M$ at $\Sigma$ with respect to $\gb$.  Thus the tangent 
bundle to $Y$ along $\Sigma$ is 
$T\Sigma\oplus \Span{\pa_r}$, and the normal bundle to $Y$ along $\Si$ can
be identified with $N\Si$.  In this case we have $\pa_r U_r|_{r=0}=0$,  
i.e. $U_r = O(r^2)$.  

The condition that $Y$ is minimal becomes a system of partial differential
equations on the normal vector fields $U_r$.  Recall that minimality of $Y$
is equivalent to the statement that $H_Y=0$, where $H_Y$ denotes the mean
curvature vector of $Y\subset X$ with respect to the metric $g_+$.

\begin{theorem}\label{U}
There is a smooth $U_r$ so that $|H_Y|_{\gb}=O(r^{k+2})$.  This condition
uniquely determines the Taylor expansion of $U_r$ modulo $O(r^{k+2})$, and
this Taylor expansion is even in $r \mod O(r^{k+2})$ .  The quantity  
$\cH:=r^{-k-2}H_Y|_{r=0}$   
defines a section of $N \Sigma$ which is independent of the choice of the
$O(r^{k+2})$ ambiguity in $U_r$.  If nonzero, $\cH$ is therefore an
obstruction to solving $|H_Y|_{\gb}=o(r^{k+2})$ with $U_r$ a formal power
series.   

There is a solution to $|H_Y(U_r)|_{\gb} = O(r^{k+3}|\log r|)$ of the form 
\begin{equation}\label{Uwithlog}
U_r = V_r -(k+2)^{-1}\cH r^{k+2} \log r, 
\end{equation}
where $V_r$ is smooth.  The $r^{k+2}$ coefficient in $V_r$ is  
formally undetermined.   
\end{theorem}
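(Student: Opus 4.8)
The plan is to set up the minimality equation $H_Y = 0$ as a quasilinear system of ODEs (in the variable $r$) for the coefficients in the Taylor expansion of $U_r$, and to analyze the resulting recursion order by order, identifying the exact order at which the indicial operator degenerates. First I would write $\gb = dr^2 + g_r$ and compute $H_Y$ for a hypothetical $Y$ given by \eqref{Y}. Using the normal exponential map of $\Si$, coordinates $(x^\al, u^{\al'})$ adapted to $\Si$ can be chosen so that $u^{\al'} = U_r^{\al'}(x)$ describes $Y_r$, and the slice metric on $Y_r$ is the pullback of $g_r$ under this graph. The mean curvature vector $H_Y$ with respect to $g_+ = r^{-2}\gb$ relates to the mean curvature $\Hb_Y$ with respect to $\gb$ by the conformal transformation law; the key effect is that $H_Y$ acquires a term proportional to $r\,\pa_r$ paired against the component of the graph in the $r$-direction, which for a graph over $M\times\{r\}$ means $H_Y$ contains the structurally dominant term coming from the $\pa_r^2 U_r$ piece together with a first-order term $\tfrac{1-k}{r}\pa_r U_r$ (the factor $1-k$ being the relevant conformal weight for a $(k+1)$-dimensional submanifold in an $(n+1)$-dimensional conformal rescaling). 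Schematically the equation takes the form
\begin{equation}\label{proposalODE}
\pa_r^2 U_r - \tfrac{k-1}{r}\pa_r U_r = F_r[U],
\end{equation}
where $F_r[U]$ collects the lower-order-in-derivatives terms: the Laplacian $\D_\Si U_r$ on $\Si$, the second fundamental form $L$ and mean curvature $H$ of $\Si$, the Taylor coefficients of $g_r$ (hence curvature of $(M,g)$), and terms at least quadratic in $U_r$ and its first derivatives. Crucially $F_r[U] = O(1)$ as long as $U_r = O(r^2)$.

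The indicial roots of the operator on the left of \eqref{proposalODE} are $0$ and $k$. The root $0$ would correspond to a constant term, which is forbidden because $U_0 = 0$ and $\pa_r U_r|_{r=0} = 0$ (orthogonality at $\Si$), so we start the expansion at $r^2$. The homogeneous operator applied to $r^m$ gives $(m(m-1) - (k-1)m)r^{m-2} = m(m-k)r^{m-2}$, which is invertible for every $m$ with $2 \le m < k+2$ except — and this is the whole point — at $m = k$, where it... does not vanish against the quadratic term but rather the obstruction appears at the next even order. Let me be careful: $m(m-k) = 0$ at $m = k$, so the coefficient of $r^k$ in $U_r$ is formally undetermined, paralleling the undetermined coefficient in the Poincaré metric expansion; but since the source $F_r$ has a nonzero $r^{k-2}$-coefficient that must be killed by $m(m-k)r^{m-2}$ at $m=k$, whose factor is zero, one instead finds that solvability at order $r^k$ imposes nothing new (the indicial factor annihilates the equation), and the genuine obstruction surfaces at order $r^{k+2}$: there the factor is $(k+2)(k+2-k) = 2(k+2) \ne 0$, so the coefficient is determined, but matching back through the quasilinear coupling forces the residual $\cH$. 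Re-examining, the cleanest bookkeeping is: solve for coefficients $U^{(2)}, U^{(4)}, \dots, U^{(k)}$ uniquely from \eqref{proposalODE} with $U^{(k)}$ the one requiring care; then $H_Y$ at order $r^{k+2}$ is $\cH$, already shown in the first part of Theorem~\ref{U} to be well-defined independent of the $O(r^{k+2})$ ambiguity. For the log statement, one feeds the ansatz \eqref{Uwithlog} into \eqref{proposalODE}: applying $\pa_r^2 - \tfrac{k-1}{r}\pa_r$ to $-\tfrac{1}{k+2}\cH r^{k+2}\log r$ produces $-\cH r^k\big(\tfrac{1}{k+2}[(k+2)(k+1) - (k-1)(k+2)]\log r + (\text{const})\big) = -\cH r^k(2\log r + \dots)$ — wait, I want the $r^k$ coefficient, so let me recompute with exponent $k+2$ against source of order $r^k$ on the right: $\pa_r^2(r^{k+2}\log r) = (k+2)(k+1)r^k\log r + (2k+3)r^k$ and $\tfrac{k-1}{r}\pa_r(r^{k+2}\log r) = (k-1)(k+2)r^k\log r + (k-1)r^k$, so the difference is $2(k+2)r^k\log r + (k+4)r^k$, hence the $\log r$ term on the left cancels the $r^k\log r$ forced on the right by the source once one iterates, producing a smooth remainder of order $r^{k+3}|\log r|$ after one more step. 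The undetermined $r^{k+2}$ coefficient of $V_r$ sits at indicial factor $(k+2)(2)\ne 0$ against the homogeneous operator but is undetermined because — here I should instead say it is undetermined because the obstruction $\cH$ has been subtracted, so the equation at that order becomes homogeneous and the indicial root structure of the full problem (with the log correction) leaves a one-parameter freedom, exactly as in the Poincaré-metric case where the coefficient of the $\log$ term is determined but the coefficient of $r^n$ is free.

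The key steps in order: (1) derive \eqref{proposalODE} explicitly, getting the precise form of the linear operator $\pa_r^2 - \tfrac{k-1}{r}\pa_r$ and verifying that $F_r[U]$ involves only $U$, $\na_\Si U$ and their products plus data of $(\Si \subset M)$ and $g_r$, with $F_r = O(1)$ when $U = O(r^2)$; (2) run the recursion for the even coefficients $U^{(2j)}$, $2j \le k$, noting that oddness is forced because $F_r$ is even in $r$ through this order (since $g_r$ is even mod $O(r^n)$ and the equation respects the parity); (3) at order $r^{k+2}$ read off $\cH = r^{-k-2}H_Y|_{r=0}$ and invoke the first part of the theorem for its well-definedness; (4) substitute the log ansatz \eqref{Uwithlog} and verify the cancellation at order $r^{k+2}\log r$ in the equation, i.e. that $-\tfrac{1}{k+2}\cH$ is precisely the coefficient making the $r^{k+2}\log r$ terms cancel so that $V_r$ can be taken smooth with $|H_Y|_{\gb} = O(r^{k+3}|\log r|)$, and that the $r^{k+2}$ coefficient of $V_r$ is then unconstrained. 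The main obstacle I expect is step (1): disentangling the conformal transformation of the mean curvature vector under $g_+ = r^{-2}\gb$ for a submanifold that is \emph{not} a slice, and extracting cleanly that the operator in $r$ is $\pa_r^2 - \tfrac{k-1}{r}\pa_r$ with the stated source structure — in particular getting the coefficient $k-1$ right (it controls where the indicial root sits and hence the order $k+2$ and the constant $(k+2)^{-1}$ in \eqref{Uwithlog}) and confirming that no term in $F_r$ secretly raises the derivative count or spoils the evenness. Once \eqref{proposalODE} is in hand, steps (2)–(4) are a standard indicial-root analysis of the type used for the Poincaré metric and for the volume renormalization, and the parity and well-definedness assertions follow as in those settings.
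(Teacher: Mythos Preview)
Your overall strategy---reduce to an ODE in $r$, read off the indicial polynomial, solve order by order up to the degenerate root, then add a log term---is exactly the paper's approach. But you have the wrong coefficient in the first-order term, and this error propagates through everything that follows.

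The operator should be $\pa_r^2 - \tfrac{k+1}{r}\pa_r$, not $\pa_r^2 - \tfrac{k-1}{r}\pa_r$. The conformal change $g_+ = r^{-2}\gb$ gives $H_{g_+} = r^2\big(H_{\gb} + (k+1)(\nabla_{\gb}\log r)^\perp\big)$ for a $(k+1)$-dimensional submanifold, and $(\nabla_{\gb}\log r)^\perp = r^{-1}(\pa_r)^\perp \approx -r^{-1}u^{\al'}_{,r}\,\pa_{\al'}$ for the graph. Thus the leading linear operator acting on $u$ is $\pa_r^2 u - (k+1)r^{-1}\pa_r u$, with indicial polynomial $m(m-k-2)$ and roots $0$, $k+2$. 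The paper obtains this via the explicit graph equation $\cM(u)$, whose linearization satisfies $\cM(u+wr^m) = \cM(u) + m(m-k-2)\,g_{\ga'\al'}w^{\al'}r^{m-1} + O(r^m)$.

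With the correct roots $0$ and $k+2$, the story is clean and matches the theorem on the nose: coefficients $U_{(2)},\dots,U_{(k)}$ are uniquely determined (the factor $m(m-k-2)$ is nonzero for $2\le m\le k$); at $m=k+2$ the factor vanishes, so the $r^{k+2}$ coefficient is undetermined and the order-$r^{k+1}$ residual in $\cM$ (equivalently order $r^{k+2}$ in $H_Y$) is the obstruction $\cH$; and applying your operator to $r^{k+2}\log r$ gives $(k+2)r^k$ with \emph{no} $r^k\log r$ term (precisely because $k+2$ is a root), which is what lets $-(k+2)^{-1}\cH\,r^{k+2}\log r$ cancel the obstruction.

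Your version, with root $k$, is internally inconsistent: you say the $r^k$ coefficient is undetermined, but the theorem asserts the expansion is determined mod $O(r^{k+2})$; you then try to locate the obstruction at $k+2$ anyway by invoking ``quasilinear coupling,'' but at $m=k+2$ your indicial factor $2(k+2)$ is nonzero, so there is no obstruction there. Your log computation produces a nonzero $2(k+2)r^k\log r$ term, which means the log ansatz would not kill a smooth obstruction but would instead introduce new log terms. You sensed something was off (``Re-examining\dots'') but did not find the source. Fix the coefficient to $k+1$ and the rest of your outline goes through and is essentially the paper's proof.
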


\begin{remark}
The same result is true for $k$ odd, but in that case
$\cH$ is always identically zero.
\end{remark}

\begin{remark}
Boundary regularity for minimal hypersurfaces in hyperbolic space has been
studied in \cite{HL}, \cite{L1}, \cite{L2}, \cite{T}, \cite{HJ}, and
\cite{HSW}.  
\end{remark}

\begin{proof}
A local coordinate version of this result was derived in \S 2 of \cite{GW}.  
We 
show how to reformulate Theorem~\ref{U} in terms of local coordinates
and outline the proof, referring to \cite{GW} for some details.     

We will work in geodesic normal coordinates on 
$M$ near $\Sigma$.  Choose a local coordinate system $\{x^\al: 1\leq 
\al\leq k\}$ for an open subset $\cV\subset \Sigma$ and a local frame
$\{e_{\al'}(x): 1\leq \al'\leq  n-k\}$ for $N\Sigma|_{\cV}$.  Let 
$\{u^{\al'}: 1\leq \al'\leq n-k\}$ denote the    
corresponding linear coordinates on the fibers of $N \Sigma|_{\cV}$.  The
map $\exp_\Si \big(u^{\al'} e_{\al'}(x)\big)\mapsto (x,u)$ defines a  
coordinate system $(x^\al,u^{\al'})$ in a neighborhood $\cW$ of $\cV$ in
$M$, with respect to which $\Sigma$ is given by $u^{\al'}=0$.  For each
$(x,u)$, 
the curve $t\mapsto (x,tu)$ is a geodesic for $g$ normal to $\Sigma$.  In 
particular, in these coordinates the mixed metric components $g_{\al\al'}$
vanish on $\cV$.  Extend the coordinates to $\cW\times [0,\ep)\subset \Xb$  
to be constant in $r$.  If $U_r$ is a 1-parameter family of sections of $N
\Sigma$ and we define $u^{\al'}(x,r)$ by $U_r(x) =
u^{\al'}(x,r)e_{\al'}(x)$, then the description \eqref{Y} of $Y$ is the
same as saying that in the coordinates $(x,u,r)$ on $X$, $Y$ is the graph
$u^{\al'}=u^{\al'}(x,r)$.     

The setting in \cite{GW} was
that $(x^\al,u^{\al'})$ is any local coordinate system on $M$ near a point
of $\Si$ with the properties that $\Si = \{u^{\al'}=0\}$ and
$\pa_{\al}\perp \pa_{\al'}$ on $\Si$, and $Y$ is described as the graph  
$u^{\al'}=u^{\al'}(x,r)$.  So our geodesic normal coordinates constructed
above and our description of $Y$ in terms of them are of this form.  

Let $h$ denote the metric induced on $Y$ by $g_+$ and set $\hb = r^2h$,
so $\hb$ is the metric induced by $\gb=dr^2+g_r$.  Now $(x^\al,r)$ restrict
to local coordinates on $Y$.  In terms of these coordinates, $\hb$ is given
by: 
\begin{equation}\label{hbar}
\begin{split}
\hb_{\al\be}=&g_{\al\be}+2g_{\al'(\al} u^{\al'}{}_{,\be)}
+g_{\al'\be'}u^{\al'}{}_{,\al}u^{\be'}{}_{,\be} \\
\hb_{\al 0} =&g_{\al\al'}u^{\al'}{}_{,r}
+g_{\al'\be'}u^{\al'}{}_{,\al}u^{\be'}{}_{,r}\\ 
\hb_{00}=& 1+g_{\al'\be'}u^{\al'}{}_{,r}u^{\be'}{}_{,r}.
\end{split}
\end{equation}
We use a '$0$' index for the $r$-direction.  
The components of $\hb$ and the derivatives of $u$ are evaluated at
$(x,r)$.  We have written 
$$
g_r=g_{\al\be}(x,u,r)dx^{\al}dx^\be
+2g_{\al\al'}(x,u,r)dx^\al
du^{\al'}+g_{\al'\be'}(x,u,r)du^{\al'}du^{\be'},
$$
and in \eqref{hbar}, all $g_{ij}$ are understood to be evaluated at
$(x,u(x,r),r)$.   

It was shown in \cite{GW} that for $g_+$ of the form \eqref{normalform} and
for $Y$ described as the graph $u^{\al'}=u^{\al'}(x,r)$, the usual minimal
submanifold equation for a graph takes the form $\cM(u)=0$, where 
\begin{equation}\label{Mform}
\begin{aligned}
\cM (u)_{\ga'} = &
\left [ r \partial_{r} - (k+1) + \frac{1}{2} r\cL_{,r} \right ] 
\left [ \hb^{00} g_{\al'\ga'}u^{\al'}{}_{,r} +
\hb^{\al 0} \left ( g_{\al\ga'} + g_{\al'\ga'}u^{\al'}{}_{,\al} 
\right ) \right ]\\
& +r\left [ \partial_{\be} + \frac{1}{2} \cL_{,\be} \right ]
\left [ \hb^{0\be} g_{\al'\ga'}u^{\al'}{}_{,r} +
\hb^{\al\be} \left ( g_{\al\ga'} + g_{\al'\ga'}u^{\al'}{}_{,\al} 
\right ) \right ] \\
& -\frac{1}{2} r \hb^{\al\be}\left [ g_{\al\be,\ga'} +
2g_{\al\al',\ga'}u^{\al'}{}_{,\be} 
+ g_{\al'\be',\ga'}u^{\al'}{}_{,\al}u^{\be'}{}_{,\be} \right ] \\
& -r \hb^{\al 0}\left [ g_{\al\al',\ga'}u^{\al'}{}_{,r} 
+ g_{\al'\be',\ga'}u^{\al'}{}_{,\al}u^{\be'}{}_{,r} \right ] \\
& -\frac{1}{2} r \hb^{00}\left [ 
g_{\al'\be',\ga'}u^{\al'}{}_{,r}u^{\be'}{}_{,r} \right ]. 
\end{aligned}
\end{equation}
Here $\cL=\log(\det \hb)$.  Components of $\hb$ and $\cM(u)$ are evaluated  
at $(x,r)$, and all $g_{ij}$ and  derivatives thereof are
evaluated at $(x,u(x,r),r)$.  The equation $\cM(u)=0$ is the equation we
will use to study the asymptotics of $U_r$.  

We next relate $\cM(u)$ to the mean curvature $H_Y$.  
Recall that $-H_Y$ is the first variation of area of $Y$, in the sense that
if $F_t:Y\rightarrow X$ is a compactly supported variation of $Y$, then 
$$
A(F_t(Y))\,{\dot{}}= -\int_Y\langle H_Y,\dot{F}\rangle_{g_+} da_Y. 
$$
Here $A$ denotes the area and $da_Y$ the area density, both with respect to 
$g_+$, and $\dot{}$ denotes $\pa_t |_{t=0}$.  The area $A(F_t(Y))$ itself is
infinite, but $A(F_t(Y))\,{\dot{}}$ is well-defined and finite since the
variation is compactly supported in $X$.  The usual derivation of the
minimal submanifold equation \eqref{Mform} for a graph amounts to
considering variations 
of the form $F_t(x,u,r)=(x,u_t,r)$ relative to coordinates $(x,u,r)$ as
above. That derivation shows that for such variations, one has   
$$
\dot{A} = -\int_Y r^{-1}\cM(u)_{\ga'}\dot{u}^{\ga'} da_Y.
$$
Therefore 
$$
r^{-1}\cM(u)_{\ga'}\dot{u}^{\ga'}=\langle H_Y,\dot{F}\rangle_{g_+}=
r^{-2}\langle H_Y,\dot{F}\rangle_{\gb}.
$$
If we write $H_Y=H^\be\pa_\be + H^{\be'}\pa_{\be'} + H^0\pa_r$, then it
follows that 
\begin{equation}\label{solveH}
g_{\be\ga'}H^\be + g_{\be'\ga'}H^{\be'} = r\cM(u)_{\ga'}.  
\end{equation}
On the other hand, $H_Y$ is normal to $Y$, so 
$$
\langle H_Y, \pa_\al + u^{\al'},_{\al} \pa_{\al'}\rangle_{\gb} = 0,\qquad
\langle H_Y, \pa_r + u^{\al'},_r \pa_{\al'}\rangle_{\gb} = 0.
$$
These can be rewritten
$$
\left(g_{\al\be}+g_{\al'\be}u^{\al'},_\al\right)H^\be
=-\left(g_{\al\be'}+g_{\al'\be'}u^{\al'},_{\al}\right)H^{\be'},
\quad
H^0=-g_{\al'\be}u^{\al'},_rH^{\be}-g_{\al'\be'}u^{\al'},_rH^{\be'}.
$$
Since $g_{\al\be}$ is smooth and nonsingular up to $r=0$ and
$g_{\al'\be}=0$  at $r=0$, the first equation can be solved to express
$H^{\be}$ as a linear function of $H^{\be'}$ near $r=0$.  The second
equation then gives $H^0$ as a function of $H^{\be'}$ near $r=0$.  Then
\eqref{solveH} can be used to solve for $H^{\be'}$ in terms of 
$r\cM(u)_{\ga'}$.  It follows that $r|\cM(u)|_{\gb}$ and $|H_Y|_{\gb}$
vanish to the same order at $r=0$.  

As discussed in \cite{GW}, the asymptotics of $u(x,r)$ can be derived
inductively from the equation $\cM(u)=0$, starting with the initial
condition $u(x,0)=0$.  For   
instance, upon simply setting $r=0$, the last four lines of 
\eqref{Mform} vanish and the first gives $u^{\al'}{}_{,r}=0$.  Suppose 
inductively that 
$u$ satisfies $\cM(u)=O(r^{m-1})$.  It is not hard to see directly
from \eqref{Mform} that 
\begin{equation}\label{inductive}
\cM(u+wr^m)_{\ga'}=\cM(u)_{\ga'}
+m(m-k-2)g_{\ga'\al'}w^{\al'}r^{m-1}+O(r^m). 
\end{equation}
(The only contribution at
order $m-1$ comes from the first term on the right-hand side.)  So if
$m<k+2$, one can uniquely determine $w|_{r=0}$ to make
$\cM(u+wr^m)=O(r^m)$.  Hence the
Taylor expansion of $u \mod O(r^{k+2})$ is uniquely 
determined by the equation $|\cM(u)|_{\gb}=O(r^{k+1})$.  By the discussion
in the previous paragraph, this corresponds to $|H_Y|_{\gb}=O(r^{k+2})$.
That $U_r$ is even follows by inspection of \eqref{Mform}:  the map 
$u\mapsto \cM(u)$ respects parity.  

To analyze what happens at order $k+2$, let $v$ be smooth and satisfy
$\cM(v)=O(r^{k+1})$.  It follows from \eqref{inductive} with $m=k+2$ that 
$r^{-k-1}\cM(v)|_{r=0}$ is independent of the choice of the order $r^{k+2}$
ambiguity in $v$, and if nonzero, is therefore an obstruction to solving
$\cM(u)=O(r^{k+2})$ with $u$ smooth.  To solve at this order it is
necessary to introduce a log term.  One calculates from \eqref{Mform} that 
$$
\cM(v+wr^{k+2}\log r)_{\ga'}
=\cM(v)_{\ga'}+(k+2)g_{\ga'\al'}w^{\al'}r^{k+1}+O(r^{k+2}|\log
r|). 
$$
Hence $w^{\al'}=-(k+2)^{-1}g^{\al'\ga'}r^{-k-1}\cM(v)_{\ga'}|_{r=0}$ is the
unique choice to make  
$u=v+wr^{k+2}\log r$ satisfy $\cM(u)=O(r^{k+2}|\log r|)$.  

Since $v$ and $u$ are unique mod $O(r^{k+2})$ and the equation $\cM(u)=0$
is a coordinate representation of the invariant condition $H_Y=0$, the 
corresponding 1-parameter families $V_r=v^{\al'}e_{\al'}$ and  
$U_r=u^{\al'}e_{\al'}$ of sections of $N\Si$ are globally and invariantly 
defined mod $O(r^{k+2})$.  
The normal space to $Y$ at $r=0$ is $\Span
\{\pa_{\be'}\}$, and it follows from \eqref{solveH} that
$g_{\ga'\be'}r^{-k-2}H^{\be'}|_{r=0}=r^{-k-1}\cM(v)_{\ga'}|_{r=0}$.  Hence 
the definition $\cH:=r^{-k-2}H_Y|_{r=0}$ is equivalent to: 
\begin{equation}\label{cH}
\cH^{\al'}=g^{\al'\ga'}r^{-k-1}\cM(v)_{\ga'}|_{r=0}.  
\end{equation}
The determination of $w$ above therefore shows that the coefficient of  
$r^{k+2}\log r$ in $U_r$ is $-(k+2)^{-1}\cH$. 
\end{proof}

We write the expansion of $U_r$ in the form
\begin{equation}\label{Uexpand}
U_r = U_{(2)}r^2+\ldots +U_{(k)}r^k
-(k+2)^{-1}\cH r^{k+2} \log r + \ldots,
\end{equation}
where each $U_{(2j)}$, $1\leq j\leq k/2$, is a globally, invariantly
defined section of $N\Sigma$ determined by the choice of metric $g$ in the
conformal class.  The $U_{(2j)}$ are not conformally invariant, but $\cH$
is: 

\begin{proposition}\label{invL}
If $\gh=\Omega^2g$ with $\Omega\in C^\infty(M)$, then
$\cHh=(\Omega|_{\Sigma})^{-(k+2)}\cH$. 
\end{proposition}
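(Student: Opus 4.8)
The plan is to use the fact that the Poincar\'e metric $g_+$ is determined by the conformal class $[g]$ alone (mod $O(r^n)$, which is more than enough since $k<n$), so that a submanifold $Y$ which is minimal for $g_+$ to high order is a geometric object attached to $\Si$ and $g_+$, independent of the representative metric used to put $g_+$ in normal form. By Theorem~\ref{U}, $\cH$ is, up to the universal constant $-(k+2)^{-1}$, the coefficient of $r^{k+2}\log r$ in the expansion \eqref{Uexpand} of the graph function $U_r$ describing $Y$. The transformation law for $\cH$ should then come entirely from the elementary substitution $r^{k+2}\log r\mapsto \rh^{k+2}\log\rh$ forced by replacing the boundary defining function $r$ adapted to $g$ by the one $\rh$ adapted to $\gh=\Omega^2g$; the point is that every other change produced by passing between the two coordinate systems is smooth in $r$, hence contributes no $\log$, so cannot affect that coefficient.

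In detail: \textbf{(1)} Recall from \cite{FG} the normal-form comparison: for $\gh=\Omega^2g$ there is a boundary defining function $\rh$ on $X$, with $\rh=(\Omega|_M)\,r+O(r^2)$ and smooth coefficients, putting $g_+$ in normal form relative to $\gh$; equivalently, the coordinate systems $(x^\al,u^{\al'},r)$ and $(\widehat{x}^\al,\widehat{u}^{\al'},\rh)$ of the proof of Theorem~\ref{U} (geodesic normal coordinates for $g$, resp.\ $\gh$, crossed with the respective defining function) are related by a diffeomorphism of a neighborhood of $M\times\{0\}$ in $X$ that is smooth to the relevant order and restricts to the identity on $\Si$. \textbf{(2)} Apply Theorem~\ref{U} relative to $\gh$ to obtain a submanifold $\widehat{Y}$ minimal for $g_+$ to order $O(\rh^{k+3}|\log\rh|)$, with graph function $\widehat{U}_{\rh}=\widehat{V}_{\rh}-(k+2)^{-1}\cHh\,\rh^{k+2}\log\rh+\cdots$. \textbf{(3)} Re-express this \emph{same} submanifold $\widehat{Y}$ as a graph $u^{\al'}=\tilde u^{\al'}(x,r)$ in the $g$-coordinates via the diffeomorphism of (1). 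The conditions singled out in Theorem~\ref{U} --- $\pa Y=\Si$, orthogonality to $M$ at $\Si$, and minimality to order $O(r^{k+3}|\log r|)$ --- are unchanged by replacing $r$ with $\rh\sim r$ (orthogonality at $\Si$ for the two barred metrics coincides because they are conformal there), so the uniqueness in Theorem~\ref{U} applied relative to $g$ gives $\tilde u^{\al'}\equiv u^{\al'}\ \mathrm{mod}\ O(r^{k+2})$; in particular the $r^{k+2}\log r$ coefficient of $\tilde u^{\al'}$ is $-(k+2)^{-1}\cH^{\al'}$. \textbf{(4)} Compute that coefficient from the $\gh$-expansion: along $Y$ one has $U_r=O(r^2)$, hence $\rh=(\Omega|_\Si)\,r+O(r^2)$ and $\widehat{x}=x+O(r^2)$, $\widehat{u}=u+O(r^2)$ along $Y$ with smooth (log-free) coefficients. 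Substituting and using
\[
\rh^{k+2}\log\rh=(\Omega|_\Si)^{k+2}\,r^{k+2}\log r+\big(\text{smooth }O(r^{k+2})\big)+O(r^{k+3}\log r),
\]
all corrections from $\widehat{u}\neq u$, from $\widehat{x}\neq x$, and from the smooth remainders are either log-free or of order beyond $r^{k+2}\log r$, so the $r^{k+2}\log r$ coefficient of $\tilde u^{\al'}$ equals $-(k+2)^{-1}(\Omega|_\Si)^{k+2}\cHh^{\al'}$. Comparing with (3) yields $\cHh^{\al'}=(\Omega|_\Si)^{-(k+2)}\cH^{\al'}$.

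I expect the main obstacle to be step (1) together with the accounting in step (4). Step (1) is the classical comparison of Poincar\'e normal forms, and one should say a word about why the Taylor coefficients of $g_r$ and of the comparison diffeomorphism actually used are conformally determined --- true since $k<n$, so that only orders strictly below where the $g_r$-expansion becomes ambiguous enter. For step (4), although no genuinely hard computation is involved, one must verify carefully that transporting the $\gh$-expansion of $\widehat{Y}$ into the $g$-coordinates produces, at order $r^{k+2}$, only a log term with the stated coefficient plus smooth (log-free) terms --- i.e.\ that the difference of the two normal exponential maps and the subleading part of $\rh/r$ never contaminate the $r^{k+2}\log r$ coefficient. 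Step (3), the appeal to the uniqueness clause of Theorem~\ref{U}, is conceptually the reason the statement holds at all, but it is a direct consequence of what has already been proved.
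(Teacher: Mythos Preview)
Your approach is correct and rests on the same key input as the paper's proof: the diffeomorphism $\psi$ from \cite{FG} relating the normal forms of the Poincar\'e metric with respect to $g$ and $\gh$, satisfying $\psi^*\rh=\Omega r+O(r^2)$ and $\psi|_M=\operatorname{id}$. Where you diverge is in how you extract the transformation law from this. You characterize $\cH$ as the coefficient of $r^{k+2}\log r$ in $U_r$ and then chase that coefficient through the coordinate change, which forces the careful accounting in your step~(4) (that the smooth part of the coordinate comparison and the difference of the two normal exponential maps produce no $\log$ contamination at order $r^{k+2}$). The paper instead uses the other characterization in Theorem~\ref{U}, namely $\cH=r^{-k-2}H_Y|_{r=0}$, and argues in two lines: if $Y$ satisfies $|H_Y|_{\gb}=O(r^{k+2})$ then $\Yh=\psi(Y)$ satisfies $|H_{\Yh}|_{\overline{\gh}}=O(\rh^{k+2})$, and since $\psi|_M=\operatorname{id}$ one has
\[
\cHh=\psi^*\big(\rh^{-k-2}H_{\Yh}\big)\big|_{\rh=0}
=\Omega^{-(k+2)}\,r^{-k-2}H_Y\big|_{r=0}
=(\Omega|_\Sigma)^{-(k+2)}\cH.
\]
Your route has the virtue of making the mechanism (substitution $\rh^{k+2}\log\rh\mapsto(\Omega|_\Si)^{k+2}r^{k+2}\log r$) completely explicit, but the paper's route sidesteps all of your step~(4) bookkeeping by working with the mean-curvature definition, which is manifestly geometric and scales trivially under the $\psi$-pullback.
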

\begin{proof}
Write $\gh_+=\rh^{-2}(d\rh^2 + \gh_{\rh})$ for the analogue of
\eqref{normalform} with respect to $\gh$.  Then there is a
diffeomorphism  
$\psi$ on a neighborhood of $M$ in $M\times [0,\ep_0)$, restricting to the  
identity on $M\times \{0\}$, for which $\psi^*\gh_+=g_+\mod O(r^{n-2})$
and $\psi^*\rh=\Omega r+O(r^2)$ (\cite{FG}).  If $Y$ satisfies    
$|H_Y|_{\gb}=O(r^{k+2})$, then $\Yh=\psi(Y)$ satisfies 
$|H_{\Yh}|_{\overline{\widehat{g}}}=O(\rh^{k+2})$, 
where the mean curvature of $\Yh$ is taken with respect to $\gh_+$.  Since 
$\psi$ restricts to the identity on $M$, it follows that  
$$
\cHh = \psi^*\cHh=\psi^*\left(\rh^{-k-2}H_{\Yh}|_{\rh=0}\right)
=\Omega^{-(k+2)} r^{-k-2}H_Y|_{r=0} =(\Omega|_{\Sigma})^{-(k+2)}\cH. 
$$
\end{proof}

\section{Energy}\label{energysection}

In this section we consider immersed submanifolds of $M$.  Thus let $\Si$
be a manifold of even dimension $k$ and $f:\Si\rightarrow M$ an immersion.  
Relative to the metric $g$ on $M$, the pullback bundle $f^*TM$ splits as  
$$
f^*TM=T\Si\oplus N\Si.
$$
$f$ is locally an embedding, so the considerations of the previous section
apply.  In particular, Theorem~\ref{U} determines a 1-parameter family
of sections $U_r\mod O(r^{k+2})$ of $N\Si$ and an obstruction field $\cH\in 
\Gamma(N\Si)$.  In this section we set $Y=\Si\times [0,\ep_0)$ immersed in 
$\Xb=M\times [0,\ep_0)$ via the map 
$\ft:\Si\times [0,\ep_0)\rightarrow \Xb$ given by: 
\begin{equation}\label{Phi}
\ft(p,r)=\big(\exp_\Si U_r(p),r\big). 
\end{equation}

Consider the asymptotics of the area density $da_Y$ for the metric induced  
by $g_+$.  We have $da_Y=\varphi\,da_\Sigma dr$ for
an invariantly defined function $\varphi$ on $\Sigma\times (0,\ep_0)$.
Here $da_\Si$ denotes the area density of $\Si$ with respect to the metric 
induced by $g$.  In terms of local coordinates $(x^\al,u^{\al'})$
introduced in the proof of Theorem~\ref{U}, 
we have
$$
\varphi(x,r)=\sqrt{\frac{\det h(x,r)}{\det \hb_{\al\be}(x,0)}} 
=r^{-k-1}\sqrt{\frac{\det \hb(x,r)}{\det \hb_{\al\be}(x,0)}}
$$
with $\hb$ given by \eqref{hbar}.  
Since $U_r$ is even in $r$ to order $k+2$, it follows that the expansion of
$\sqrt{\frac{\det \hb(x,r)}{\det \hb_{\al\be}(x,0)}}$ has only even terms 
through order $k$.  Hence we can write
\begin{equation}\label{invariantexpand}
da_Y=
r^{-k-1}\left[a^{(0)}+a^{(2)}r^2+\ldots
  +a^{(k)}r^k+\ldots\right]\,da_\Sigma dr
\end{equation}
for invariantly defined functions $a^{(2j)}$, $1\leq j\leq k/2$, on
$\Sigma$ determined by    
\begin{equation}\label{areaexpand}
\sqrt{\frac{\det \hb(\cdot,r)}{\det \hb_{\al\be}(\cdot,0)}}   
= a^{(0)}+a^{(2)}r^2+\ldots +a^{(k)}r^k+\ldots. 
\end{equation}
In particular, $a^{(0)}= 1$.  The $a^{(2j)}$ are called the renormalized
area coefficients for $\Si$.  

Assume now that $\Si$ is compact.  It follows
upon integration of \eqref{invariantexpand} that for $\epsilon_0$ fixed,  
\begin{equation}\label{area}
\operatorname{Area}(Y\cap \{\ep<r<\ep_0\}) =
A_0\ep^{-k}+A_2\ep^{-k+2}+\ldots + 
A_{k-2}\ep^{-2} + \cE \log \frac{1}{\ep} +O(1)
\end{equation}
as $\ep\rightarrow 0$, with
\begin{equation}\label{areacoeffs}
A_{2j}=\frac{1}{k-2j}\int_\Sigma a^{(2j)}\,da_\Sigma,\quad 0\leq j\leq k/2-1,
\qquad \cE=\int_\Sigma a^{(k)}\,da_\Sigma. 
\end{equation}

\begin{proposition}\label{invariance}
$\cE$ is independent of the choice of representative metric $g$.
\end{proposition}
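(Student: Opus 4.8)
The plan is to mimic the diffeomorphism argument used in the proof of Proposition~\ref{invL}. Let $\gh = \Omega^2 g$ be a conformally rescaled metric, and write $\gh_+ = \rh^{-2}(d\rh^2 + \gh_{\rh})$ for the associated Poincar\'e metric in normal form. By \cite{FG} there is a diffeomorphism $\psi$ on a neighborhood of $M$ in $M\times[0,\ep_0)$, restricting to the identity on $M\times\{0\}$, with $\psi^*\gh_+ = g_+ \mod O(r^{n-2})$ and $\psi^*\rh = \Omega r + O(r^2)$. Since the induced metrics $h$, $\hh$ on $Y$, $\Yh:=\psi(Y)$ are then related by $\psi^*\hh = h \mod O(r^{n-2})$ and the renormalized area coefficients only involve the Taylor expansion of the induced metric through order $k < n$, the two area expansions carry the same information up to the coordinate change $\psi$. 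The key point to extract is that $\int_\Si \ah^{(k)}\, da_{\Sigh} = \int_\Si a^{(k)}\, da_\Si$, which is exactly $\wh{\cE} = \cE$.

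The cleanest route is to avoid chasing the $\psi$-pullback of each coefficient $a^{(2j)}$ individually and instead argue directly from the renormalization. First I would observe that $\mathrm{Area}(\Yh \cap \{\rh > \ep\})$ and $\mathrm{Area}(Y \cap \{r > \ep\})$, both computed using the \emph{same} metric $g_+$ (since $\psi^*\gh_+ = g_+$ to the order that matters and $\psi$ does not change areas), differ only in the cutoff locus: $\{\rh > \ep\}$ versus $\{r > \ep\}$, and $\rh = \Omega r + O(r^2)$. So the two truncated areas differ by the integral of the area density over the thin region between $\{r = \ep\}$ and $\{\rh = \ep\}$. Using \eqref{invariantexpand}, $da_Y = r^{-k-1}(1 + a^{(2)}r^2 + \cdots)\, da_\Si\, dr$, and the region in question is $\{\ep < r < \Omega^{-1}\ep + O(\ep^2)\}$ over each point of $\Si$; integrating the density over this range and expanding in $\ep$ produces a Laurent expansion in $\ep$ whose terms are $\ep^{-k}, \ldots, \ep^{-2}$ and a term in $\log\frac1\ep$ whose coefficient one checks is \emph{zero}, because the $\log$ arises only from the $r^{-1}$ term of the density and $\int_\ep^{c\ep} r^{-1}\,dr = \log c$ is bounded. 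Hence the coefficient of $\log\frac1\ep$ is unchanged: the truncated areas agree modulo $O(1)$ plus negative powers of $\ep$, so $\cE$, read off as the $\log\frac1\ep$ coefficient, is the same whether the defining function is $r$ or $\rh$. This shows the $\log$ coefficient is insensitive to the choice of boundary defining function in the conformal class, which is precisely the content of the proposition.

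Carrying this out requires one genuine check: that replacing $g_+$ by $\gh_+$ — i.e. replacing the approximate identity $\psi^*\gh_+ = g_+ + O(r^{n-2})$ — does not perturb the $\log\frac1\ep$ coefficient. This is where the hypothesis $k < n$, hence $k \le n-2$ in even dimensions... more precisely $k+2 \le n$, is used: the $O(r^{n-2})$ error in the metric contributes to the area density only at orders $r^{-k-1+(n-2)}$ and higher, i.e. at order $r^{n-k-3}$ and beyond relative to the leading term, and since $n - k - 3 \ge -1$ is not automatic one must be slightly careful — the safe statement is that the error affects only the coefficients $a^{(2j)}$ with $2j \ge n-2 > k$, hence not $a^{(k)}$, and likewise not the $\log$ term. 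I expect this bookkeeping — tracking that the $O(r^{n-2})$ discrepancy between $\psi^*\gh_+$ and $g_+$ stays strictly below the relevant order — to be the main obstacle, though it is a routine order-counting argument once set up; everything else is the elementary computation of $\int_\ep^{\Omega^{-1}\ep} r^{-k-1}(1 + \cdots)\,dr$. An alternative, if one prefers to stay closer in spirit to Proposition~\ref{invL}, is to transform coordinates $(x, u, r) \mapsto (x, u, \rh)$ directly, compute how $da_Y$ and the family $U_r$ transform, verify that $\ah^{(k)} da_{\Sigh} - a^{(k)} da_\Si$ is an exact divergence on $\Si$, and integrate; but the cutoff-region argument above sidesteps the need to identify that divergence explicitly.
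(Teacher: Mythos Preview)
Your proposal is correct and the core argument --- compare the two truncated areas and show that the thin region between the cutoffs $\{r>\ep\}$ and $\{\rh>\ep\}$ contributes no $\log\frac{1}{\ep}$ term --- is exactly the paper's approach. The paper's version is slightly more economical: rather than introducing a second Poincar\'e metric $\gh_+$ and the diffeomorphism $\psi$, it keeps the single metric $g_+$ throughout and simply replaces the defining function $r$ by the special defining function $\rh$ satisfying $\rh^2 g_+|_{TM}=\gh$ and $|d\rh/\rh|_{g_+}=1$. Since the minimal $Y$ and the area density $da_Y$ depend only on $g_+$, not on the choice of defining function, both $\cE$ and $\wh{\cE}$ are read off from the \emph{same} area integral, merely truncated differently; this makes the $O(r^{n-2})$ bookkeeping step you flag as the main obstacle disappear entirely --- there is no metric discrepancy to control, only the cutoff changes. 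Your detour through $\psi$ is not wrong, just avoidable.
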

\begin{proof}
Let $\gh$ be a conformally related metric.  There is a uniquely determined
defining function 
$\rh$ in a neighborhood of $M$ in $\Xb$ such that $\rh^2g_+|_{TM}=\gh$ and  
$|d\rh/\rh|_{g_+}=1$.  The difference $\cE-\widehat{\cE}$ is the
coefficient of $\log\frac{1}{\ep}$ in the expansion of 
$\operatorname{Area}(Y\cap \{\ep<r\}) -\operatorname{Area}(Y\cap
\{\ep<\rh\})$.  Now $\rh>\ep$ is equivalent to $r>\ep b(x,\ep)$ for 
a smooth positive function $b(x,\ep)$.  Writing 
$\operatorname{Area}(Y\cap \{\ep<r\}) -\operatorname{Area}(Y\cap
\{\ep<\rh\})$ as an integral, it follows without difficulty that
the coefficient of $\log\frac{1}{\ep}$ in its expansion is equal to zero. 
See Proposition 2.1 of \cite{GW} for details.    
\end{proof}

The motivation for viewing $\cE$ as a version of the Willmore energy is
the fact, derived in \cite{GW}, that when $k=2$ and $(M,g)$ is
3-dimensional Euclidean space, $\cE$ reduces to a multiple of the usual 
Willmore energy of $\Si$.  This derivation will be reviewed in
\S\ref{formulas}.  

\begin{remark} There are other natural invariant immersions 
$\Si\times [0,\ep_0)\rightarrow M\times [0,\ep_0)$ having the same image as  
$\ft$, which give rise to different  
coefficients $a^{(2j)}$ in \eqref{invariantexpand}.  The quantity
$\operatorname{Area}(Y\cap \{\ep<r<\ep_0\})$ is independent of the choice 
of parametrization, so as long as the immersion takes the form 
$(p,r)\rightarrow (\Phi_r(p),r)$ for a 1-parameter family of
immersions $\Phi_r:\Si\rightarrow M$ satisfying $\Phi_0=f$,
the coefficients $A_{2j}$ given by \eqref{areacoeffs} will be the same.
The corresponding $a^{(2j)}$ will differ by a divergence.   Consequently, one 
might not expect that the specific coefficients $a^{(2j)}$ will play as 
fundamental a role as they do for the case of volume renormalization, where 
there is a canonical parametrization.    
\end{remark}

Next we consider the variational derivative of $\cE$ on the space of
immersions of $\Si$ into $M$.  
Let $F_t:\Sigma \rightarrow M$, $0\leq t< \delta$ be a variation of  
$\Sigma$, i.e. a smooth 1-parameter family of immersions with $F_0=f$.
Denote by $F:\Si\times [0,\de)\rightarrow M$ the map $F(p,t)=F_t(p)$.  
Let 
$\Sigma_t$ denote $\Si$ immersed into $M$ via $F_t$, let $a^{(k)}_t$ 
be the corresponding renormalized volume coefficient, and set $\cE_t = 
\int_{\Sigma_t}a^{(k)}_tda_{\Sigma_t}$.  
Also set $\dot {F} =\pa_t F|_{t=0} 
\in \Gamma(f^*TM)$ and $\dot{\cE}=\pa_t \cE_t|_{t=0}$.  
\begin{theorem}\label{variation}
If $k\geq 2$, then              
$$
\dot{\cE} = - \int_\Sigma \langle\dot{F},\cH \rangle_g \,da_\Sigma . 
$$
\end{theorem}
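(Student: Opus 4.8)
The plan is to compute $\dot{\cE}$ as the variation of the $\log\frac{1}{\ep}$-coefficient in the renormalized area expansion, and to exploit the fact that the minimal-submanifold equation $H_Y=0$ has been solved to order $r^{k+2}$. The key point is that varying the submanifold $\Si$ inside $M$ induces a corresponding variation of the model submanifold $Y\subset X$, and the first variation of the area of $Y$ is controlled by $H_Y$, which by Theorem~\ref{U} is $O(r^{k+2})$. So the variation of the area, computed via the standard first-variation formula $\dot A = -\int_Y \langle H_Y,\dot{\ft}\rangle_{g_+}\, da_Y$, sees the obstruction field $\cH = r^{-k-2}H_Y|_{r=0}$ at precisely the order that produces a $\log$ term.

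The steps I would carry out are as follows. First, set up the variation: extend $F_t:\Si\to M$ to a variation $\ft_t:\Si\times[0,\ep_0)\to \Xb$ of the model immersion $\ft$ from \eqref{Phi}, where $\ft_t(p,r) = (\exp_\Si (U^t_r(p)), r)$ with $U^t_r$ the 1-parameter family attached to $\Si_t$ by Theorem~\ref{U}; here $U^0_r = U_r$ and $\partial_t\ft_t|_{t=0} = \dot F$ at $r=0$. Second, apply the first variation of area to $Y_t\cap\{\ep<r<\ep_0\}$: since the variation is not compactly supported, there is a boundary term at $r=\ep$ and at $r=\ep_0$, giving
\begin{equation*}
\partial_t\operatorname{Area}(Y_t\cap\{\ep<r<\ep_0\})\big|_{t=0} = -\int_{Y\cap\{\ep<r<\ep_0\}}\langle H_Y,\dot{\ft}\rangle_{g_+}\,da_Y + (\text{boundary terms}).
\end{equation*}
Third, analyze each piece. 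The boundary term at $r=\ep_0$ is $O(1)$ in $\ep$ and contributes nothing to the $\log\frac{1}{\ep}$-coefficient; the boundary term at $r=\ep$ must be shown to contribute no $\log$ term either — this requires checking the asymptotic behavior of $\dot\ft$ and the induced metric near $r=0$, and is where the $U_r = O(r^2)$ and evenness properties from Theorem~\ref{U} are used. The bulk integral: write $da_Y = r^{-k-1}\varphi_0\, da_\Si\, dr$ with $\varphi_0$ smooth in $r$, use $H_Y = \cH\, r^{k+2} + O(r^{k+3}\log r)$ (only the primed, i.e. normal, component survives at $r=0$, by \eqref{cH}), and observe that the contraction $\langle H_Y,\dot\ft\rangle_{g_+} = r^{-2}\langle H_Y,\dot\ft\rangle_{\gb}$ is $O(r^{k}\log r)$-ish; more precisely $r^{-k-1}r^{-2}\cdot r^{k+2} = r^{-1}$, so $\int_\ep \frac{dr}{r}$ produces exactly $\log\frac{1}{\ep}$ with coefficient $\int_\Si\langle\cH,\dot F\rangle_g\, da_\Si$ (after confirming that $\varphi_0|_{r=0}=1$ and that only the $r=0$ values of $\cH$ and $\dot\ft$ enter the residue). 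Matching against \eqref{area} then gives $\dot{\cE} = -\int_\Si\langle\dot F,\cH\rangle_g\, da_\Si$.

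The main obstacle I anticipate is the careful bookkeeping of the boundary term at $r=\ep$ and the justification that one may differentiate the expansion \eqref{area} term by term in $t$ — i.e., that $\dot{\cE}$ really is the $\log\frac{1}{\ep}$-coefficient of $\partial_t\operatorname{Area}(Y_t\cap\{\ep<r<\ep_0\})|_{t=0}$. This needs uniformity of the expansion \eqref{invariantexpand} in $t$, which follows because all the $U^t_{(2j)}$ and $a^{(2j)}_t$ depend smoothly on $t$ (the inductive construction in the proof of Theorem~\ref{U} is smooth in parameters). A secondary subtlety is that $\dot\ft$ is a vector field along $Y$ in $X$, not a priori tangent or normal to $Y$; but only its pairing with the normal field $H_Y$ matters, and at $r=0$ the normal space to $Y$ is $\Span\{\pa_{\al'}\}$, so $\langle H_Y,\dot\ft\rangle_{\gb}|_{r=0} = \langle \cH, \dot F^\perp\rangle_g \cdot r^{k+2}|_{\text{leading}}$, and one checks the tangential part of $\dot F$ contributes only a divergence (equivalently, reparametrization) which integrates to zero over compact $\Si$. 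With these points settled the computation is routine.
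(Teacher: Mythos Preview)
Your overall strategy---use the first variation of area on $Y\cap\{\ep<r<\ep_0\}$ and read off the $\log\frac{1}{\ep}$ coefficient---matches the paper's, but the mechanism by which the log term appears is genuinely different, and both routes are valid. You choose the \emph{smooth} truncation $U_r^t=U^t_{(2)}r^2+\cdots+U^t_{(k)}r^k$, so that $H_Y$ is smooth with $r^{-k-2}H_Y|_{r=0}=\cH$; then the bulk integral $-\int_{Y^\ep}\langle H_Y,\dot{\ft}\rangle_{g_+}\,da_Y$ has integrand $\sim r^{-1}\langle\cH,\dot F\rangle_g\,da_\Si\,dr$ and produces the $\log\frac{1}{\ep}$ term directly, while the boundary term at $r=\ep$ is a Laurent series in $\ep$ (everything smooth) and contributes no log. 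The paper instead includes the $-(k+2)^{-1}\cH\,r^{k+2}\log r$ term in $U_r^t$, making $|H_Y|_{\gb}=O(r^{k+3}|\log r|)$; then the bulk integral is $O(1)$ and carries no log, and the entire $\log\frac{1}{\ep}$ contribution comes from the boundary term at $r=\ep$, via the $-\cH^{\be'}\ep^{k+1}\log\ep$ piece of $u^{\be'}{}_{,r}$ inside $\langle n,\dot{\Ft}\rangle_{g_+}$.

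Your route is arguably cleaner: it makes the link between $\dot\cE$ and $\cH$ transparent (the obstruction is literally the residue of the mean-curvature integrand), and it avoids the careful log-tracking the paper does in its Lemma on the expansion of $\dot{\Ft}$ and in the boundary analysis. Two small corrections: write $\exp_{\Si_t}$ rather than $\exp_\Si$ (since $U_r^t\in\Gamma(N\Si_t)$), and with the smooth choice of $U_r$ your remainder for $H_Y$ is $O(r^{k+3})$, not $O(r^{k+3}\log r)$. Also, evenness of $U_r$ is not what makes the boundary log-free in your setup---smoothness alone suffices---and the remark about the tangential part of $\dot F$ contributing a divergence is unnecessary, since $\cH\in\Gamma(N\Si)$ already kills it in the pairing.
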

\begin{proof}

For each $t$, let $U_r^t$ be the 1-parameter family of sections of
$N\Si_t$ determined by $\Sigma_t$ modulo $O(r^{k+2})$ as in 
Theorem~\ref{U}.  For definiteness, we fix the indeterminacy in $U_r^t$
by truncating the expansion \eqref{Uexpand} after the log term: 
\begin{equation}\label{Utexpand}
U_r^t(p)= U_{(2)}^t(p)r^2 + \ldots + U_{(k)}^t(p)r^k -(k+2)^{-1}\cH^t(p)
r^{k+2} \log r,\qquad p\in \Si.  
\end{equation}
Let $\Ft_t:\Si\times [0,\ep_0)\rightarrow \Xb$ be the immersion defined
by analogy to \eqref{Phi}:
\begin{equation}\label{Ft}
\Ft_t(p,r)
=\big(\exp_{\Si_t} U_r^t(p),r\big),
\end{equation}
and denote by $Y_t$ be the corresponding immersed submanifold of $\Xb$.  
Then $Y_0=Y$ and $U^0_r=U_r$.  

Fix $\ep_0$ small and let $0<\ep<\ep_0$.   Set 
$Y_t^\ep=Y_t\cap\{\ep<r<\ep_0\}$ and $Y^\ep = Y^\ep_0$.  Then 
$\Ft_t|_{\Si\times (\ep,\ep_0)}:\Si\times (\ep,\ep_0)\rightarrow Y_t^\ep$
is a variation of  the manifold-with-boundary $Y^\ep$.   
The first variation of area formula for $\Ft_t|_{\Si\times (\ep,\ep_0)}$  
with background metric $g_+$ states    
\begin{equation}\label{Area}
A(Y_t^\ep)\,\dot{} 
= -\int_{Y^\ep}\langle H_Y,\dot{\Ft}\rangle_{g_+}\,da_Y
+\left(\int_{Y\cap \{r=\ep_0\}} + \int_{Y\cap \{r=\ep\}}\right)  
\langle n,\dot{\Ft}\rangle_{g_+} da_\pa,
\end{equation}
where $da_\pa$ denotes the induced area density and $n$ the outward 
pointing normal on $\pa Y^\ep=(Y\cap \{r=\ep_0\})\cup (Y\cap \{r=\ep\})$.    
Both sides of this equation blow up as $\ep\rightarrow 0$.  We consider
their asymptotic expansions in $\ep$.  

According to \eqref{area}, we have 
$$
A(Y^\ep_t)\,\dot{} = \dot{A}_0\ep^{-k}+\dot{A}_2\ep^{-k+2}+\ldots +
\dot{A}_{k-2}\ep^{-2} + \dot{\cE} \log \frac{1}{\ep} +O(1).
$$
So $\dot{\cE}$ occurs as the coefficient of $\log \frac{1}{\ep}$ in the
asymptotic expansion of the left-hand side of \eqref{Area}.  The proof will
be concluded by showing that the coefficient of $\log \frac{1}{\ep}$ on the
right-hand side is $-\int_\Sigma \langle\dot{F},\cH \rangle_g
\,da_\Sigma$.  It suffices to assume that $\dot{F}$ is supported in a small
open set in $\Si$.  In the following argument, we sometimes reduce $\ep_0$
and $\de$ without mention.       

We begin by analyzing $\dot{\Ft}$, a section of $\ft^*T\Xb$.  
Certainly $\dot{\Ft}|_\Si = \dot{F}$.  The  
decomposition $\Xb= M\times [0,\ep_0)$ induces a decomposition   
$T\Xb= TM\oplus T([0,\ep_0))$.  It is clear from \eqref{Ft} that the  
$T([0,\ep_0))$-component 
of $\dot{\Ft}$ vanishes at each point.  Choose $\cV\subset \Si$ open and  
local coordinates $z=(z^1,\ldots,z^n)$ for $M$ in a
neighborhood of $f(\cV)$.  We can write 
\begin{equation}\label{Fcoef}
\dot{\Ft} = \dot{\Ft}{}^i \pa_{z^i} 
\end{equation}
with coefficients $\dot{\Ft}{}^i$ which are 
functions on $\cV\times [0,\ep_0)$.
\begin{lemma}\label{Fdot}
Each $\dot{\Ft}{}^i$ has an asymptotic expansion of the form 
\begin{equation}\label{expand}
f_0 + f_2r^2 + \ldots + f_kr^k 
+ f_{\log}r^{k+2}\log r +O(r^{k+2})
\end{equation}
with coefficients $f_{2j}, f_{\log}\in C^\infty(\cV)$.    
\end{lemma}
\begin{proof}
As bundles on $\Si\times [0,\de)$, the pullback bundle splits as 
$F^*TM=T\oplus N$, where $T_{(p,t)}=T_p\Si_t$, $N_{(p,t)}=N_p\Si_t$.  We
define $\exp:N\rightarrow M$ near the zero section by
$\exp v=\exp_{\Si_t}v$ for $v\in N_{(p,t)}=N_p\Si_t$.  Let  
$e_{\al'}(p,t)$, $1\leq \al'\leq n-k$, be a smooth frame for  
$N|_{\cV\times [0,\de)}$.  
This frame determines a diffeomorphism
$\chi:\cV\times [0,\de)\times \cU\rightarrow N$ onto its image, for $\cU$ a 
neighborhood of the origin in $\R^{n-k}$, by
$$
\chi(p,t,u)=\big(p,t,u^{\al'}e_{\al'}(p,t)\big).
$$
If we represent points of $M$ using the local coordinates $z=(z^1,\ldots, 
z^n)$, then in these coordinates the map $\exp$ can be expressed as 
$$
(\exp\circ\chi)(p,t,u)=z(p,t,u),
$$
where $z(p,t,u)$ is a smooth $\R^n$-valued function on $\cV\times
[0,\de)\times \cU$.  
In these terms, the definition \eqref{Ft} of $\Ft_t$ becomes
$$
(\Ft_t)(p,r)=\left(z\big(p,t,u_r(p,t)\big),r\right),
$$
where $u_r:\cV\times [0,\de)\rightarrow \cU$ denotes the components of 
$U_r^t$ in the 
frame $e_{\al'}$, defined by  
\begin{equation}\label{usubr}
U_r^t(p)=u^{\al'}_r(p,t)e_{\al'}(p,t).
\end{equation}

Now $z$ is a smooth function of $(p,t,u)$.  
So the asymptotic expansion in $r$ of the $z$-components of $\Ft_t$
can be obtained by composing the Taylor expansion of $z$ about $u=0$ with 
the expansion of the $u^{\al'}_r$, which are determined by combining
\eqref{Utexpand} with \eqref{usubr}.   
It follows that each $z$-component of $\Ft_t$ has an expansion of the 
form \eqref{expand} with coefficients depending smoothly on $t$. 
Differentiation in $t$ at $t=0$ yields the stated claim concerning 
$\dot{\Ft}$.    
\end{proof}

Return now to consider the right-hand side of \eqref{Area}.  
According to Theorem~\ref{U}, we have 
$|H_Y|_{\gb}=O(r^{k+3}|\log r|)$.    
Lemma~\ref{Fdot} shows that $|\dot{\Ft}|_{\gb}=O(1)$.  Consequently
$|\langle H_Y,\dot{\Ft}\rangle_{g_+}| =O(r^{k+1}|\log r|)$.  Since 
$da_Y=O(r^{-k-1})da_\Sigma dr$, we deduce that
$$
\Big|\int_{Y^\ep}\langle H_Y,\dot{\Ft}\rangle_{g_+}\,da_Y\Big|=O(1) 
$$
as $\ep\rightarrow 0$.  In particular, this term does not contribute to the
$\log\frac{1}{\ep}$ term in the expansion of the right-hand side of
\eqref{Area}.  Likewise, the integral over $Y\cap \{r=\ep_0\}$ is
independent of $\ep$, so does not contribute to the $\log\frac{1}{\ep}$
term.  So the $\log \frac{1}{\ep}$ term in the asymptotic expansion of the
right-hand side of \eqref{Area} equals that for 
$\int_{Y\cap\{r=\ep\}}\langle n,\dot{\Ft}\rangle_{g_+}\,da_\ep$.  Here 
we denote the induced area density on $Y\cap\{r=\ep\}$ by $da_\ep$.     
We regard $Y\cap\{r=\ep\}$ as the immersed submanifold of $X$ defined by  
the immersion 
$\ft_\ep:\Sigma\rightarrow X$,  $\ft_\ep(p)=\ft(p,\ep)$.    
We study the pointwise asymptotics in $\ep$ of 
$\langle n,\dot{\Ft}\rangle_{g_+}|_{r=\ep}$ and of $da_\ep$ using local   
coordinates.    

Choose a local coordinate system $\{x^\al: 1\leq 
\al\leq k\}$ for an open subset $\cV\subset \Sigma$ and a local frame
$\{e_{\al'}(x): 1\leq \al'\leq  n-k\}$ for $N\Sigma|_{\cV}$ as in the proof
of Theorem~\ref{U}.  In the corresponding local coordinates $(x,u)$ 
for $M$ near $f(\cV)$, $Y$ is given by $u^{\al'}=u^{\al'}(x,r)$, where
$u^{\al'}(x,r)$ are the components of $U_r(x)$ in the frame
$\{e_{\al'}(x)\}$. 
The metric on $Y$ induced by $g_+$ takes the 
form $h=r^{-2}\hb$ in the local coordinates $(x,r)$ on $\Sigma\times
[0,\ep_0)$, with $\hb$ given by \eqref{hbar}.  The outward unit 
conormal to $\{r=\ep\}$ is $-dr/|dr|_h$, so the outward unit normal is
given in these $(x,r)$ coordinates by 
$$
-\frac{1}{\sqrt{h^{00}}}\left(h^{0\al}\,\pa_\al + h^{00}\,\pa_r\right)=
-\left(\frac{h^{0\al}}{\sqrt{h^{00}}}\,\pa_\al + \sqrt{h^{00}}\,\pa_r\right).  
$$
Thus
\[
\begin{split}
-n&=\ft_*\left(\frac{h^{0\al}}{\sqrt{h^{00}}}\,\pa_\al +
\sqrt{h^{00}}\,\pa_r\right)
=\frac{h^{0\al}}{\sqrt{h^{00}}}\left(\pa_\al +
u^{\al'}{}_{,\al}\,\pa_{\al'}\right)
+\sqrt{h^{00}}\left(\pa_r +
u^{\al'}{}_{,r}\pa_{\al'}\right)\\
&=\frac{h^{0\al}}{\sqrt{h^{00}}}\pa_\al 
+\left(\frac{h^{0\ga}}{\sqrt{h^{00}}}u^{\al'}{}_{,\ga}
+\sqrt{h^{00}}u^{\al'}{}_{,r}\right)\pa_{\al'}  
+\sqrt{h^{00}}\,\pa_r.
\end{split}
\]
All metric coefficients $h^{0\al}$, $h^{00}$ are evaluated at $(x,\ep)$.

Take the coordinates $z$ in \eqref{Fcoef} to be $z=(x,u)$.  Then
\eqref{Fcoef} becomes
$$
\dot{\Ft} = \dot{\Ft}{}^{\al}\pa_\al +\dot{\Ft}{}^{\al'}\pa_{\al'}.  
$$
Recalling \eqref{normalform}, it follows that 
\begin{equation}\label{pb1}
\begin{split}
-\ep\langle n,\dot{\Ft} \rangle_{g_+}
=&\,\,g_{\al\be}\dot{\Ft}{}^{\al}\frac{\hb{}^{0\be}}{\sqrt{\hb{}^{00}}}
+g_{\al\be'}\dot{\Ft}{}^{\al}\left(\frac{\hb{}^{0\ga}}{\sqrt{\hb{}^{00}}}u^{\be'}{}_{,\ga}
+\sqrt{\hb{}^{00}}u^{\be'}{}_{,r}\right)\\
&+g_{\be\al'}\dot{\Ft}{}^{\al'}\frac{\hb{}^{0\be}}{\sqrt{\hb{}^{00}}}
+g_{\al'\be'}\dot{\Ft}{}^{\al'}\left(\frac{\hb{}^{0\ga}}{\sqrt{\hb{}^{00}}}u^{\be'}{}_{,\ga}
+\sqrt{\hb{}^{00}}u^{\be'}{}_{,r}\right).
\end{split}
\end{equation}
All $g_{ij}$ are evaluated at $(x,u(x,\ep),\ep)$.  
Likewise,
\begin{equation}\label{pb2}
da_\ep=\sqrt{\det h_{\al\be}(x,\ep)}\,dx
=\ep^{-k}\sqrt{\det \hb_{\al\be}(x,\ep)}\,dx
=\ep^{-k}\sqrt{\frac{\det \hb_{\al\be}(x,\ep)}{\det\hb_{\al\be}(x,0)}}
\,da_\Sigma.
\end{equation}

Consider the asymptotic expansion in $\ep$ of each of the terms appearing
in the right-hand sides of \eqref{pb1}, \eqref{pb2}.  In \eqref{pb1}, 
the factors $g_{ij}$, $\dot{\Ft}{}^i$, $\hb{}^{0\be}$, 
$\sqrt{\hb{}^{00}}$, $1/\sqrt{\hb{}^{00}}$, $u^{\be'}{}_{,\ga}$ and 
$u^{\be'}{}_r$ all have expansions in 
nonnegative powers of $\ep$ and positive powers of $\log\ep$.  We analyze
the powers of $\ep$ multiplying the $\log\ep$ terms in the expansions.  

First consider  
$g_{ij}(x,u(x,\ep),\ep)$.  Since $g_{ij}(x,u,r)$ is
smooth, the asymptotic expansion of $g_{ij}(x,u(x,\ep),\ep)$ is obtained by
composing the Taylor expansion of $g_{ij}$ in $u$ and $r$ about $u=0$,
$r=0$ with the asymptotic expansion of $u$ in $r$, and then setting
$r=\ep$.  Thus it follows 
from \eqref{Utexpand} with $t=0$ that each $\log \ep$ term in   
$g_{ij}(x,u(x,\ep),\ep)$ occurs muliplied by $\ep$ to a power at least
$k+2$.  Next consider the induced metric coefficients $\hb_{\al\be}$,  
$\hb_{\al 0}$, $\hb_{00}$ given by \eqref{hbar}.  We claim likewise that
$\log \ep$ occurs in each of these muliplied by $\ep$ to a power at least  
$k+2$.  For $\hb_{\al\be}$ this is clear since the derivatives of
$u^{\al'}$ which appear are tangential to $\Sigma$.  Now $u^{\al'}{}_{,r}$
has a term of the form $\ep^{k+1}\log \ep$.  However, since $g_{\al\al'}$, 
$u^{\al'}{}_{,\al}$ and $u^{\al'}{}_{,r}$ all vanish at $\ep=0$, the
log terms in the expansions of the $u^{\al'}{}_{,r}$ occurring 
in $\hb_{\al 0}$ and $\hb_{00}$ all get multiplied by at least one extra
factor of $\ep$, and the claim 
follows.  We conclude that each $\log \ep$ in the inverse
metric coefficients $\hb^{\al\be}$, $\hb^{\al 0}$, $\hb^{00}$ also 
is muliplied by $\ep$ to a power at least $k+2$.  Lemma~\ref{Fdot} shows
that each $\log \ep$ in $\dot{\Ft}{}^{\al}$ and 
$\dot{\Ft}{}^{\al'}$ is muliplied by $\ep$ to a power at least
$k+2$.  

The first term on the right-hand side of \eqref{pb1} is 
$g_{\al\be}\dot{\Ft}{}^{\al}\hb{}^{0\be}/\sqrt{\hb{}^{00}}$.
{From} the above observations it is clear that each $\log \ep$ term in its
asymptotic expansion is muliplied by $\ep$ to a power at least $k+2$.
Likewise for the third term 
$g_{\be\al'}\dot{\Ft}{}^{\al'}\hb{}^{0\be}/\sqrt{\hb{}^{00}}$. 
The second and fourth terms of \eqref{pb1} include a factor
$u^{\be'}{}_{,r}$, whose expansion has a term of the form
$\ep^{k+1}\log \ep$.  Now the second term has a 
leading factor $g_{\al\be'}$, which vanishes at $\ep=0$.  So 
each $\log \ep$ term in the asymptotic expansion of the second term is
muliplied by $\ep$ to a power at least $k+2$.  However, this is not the
case for the fourth term.   According to \eqref{Utexpand}, $u^{\be'}{}_{,r}$
has a term $-\cH^{\be'}\ep^{k+1}\log\ep$.  Since 
$\dot{\Ft}|_\Sigma  =\dot{F}$ and $\hb^{00}|_\Sigma=1$, it follows that the
expansion of the fourth term of \eqref{pb1} has a term 
$-g_{\al'\be'}\dot{F}^{\al'}\cH^{\be'}\,\ep^{k+1}\log\ep$.  Putting all this
together, we conclude that the  
expansion of $\langle n,\dot{\Ft} \rangle_{g_+}$
has a term $\langle \dot{F},\cH\rangle_g\,\ep^k\log\ep$, 
and all other $\log\ep$ terms appear with a coefficient of
$\ep^{k+1}$ or higher.     

It is clear that the factor 
$\sqrt{\det \hb_{\al\be}(x,\ep)/\det\hb_{\al\be}(x,0)}$ in \eqref{pb2} has
an expansion 
with a leading term of $1$ and with all $\log\ep$ terms multiplied by a
power of $\ep$ at least $k+2$.  Combining with the conclusion of the above
paragraph, it follows that the $\log\ep$ coefficient in the expansion of  
$\langle n,\dot{\Ft}\rangle_{g_+}da_\ep$ is 
$\langle \dot{F},\cH\rangle_g\,\,da_\Sigma$.
Integrating over $\Sigma$ concludes the proof of Theorem~\ref{variation}.    
\end{proof}

As a consequence of Theorem~\ref{variation}, we deduce the following
proposition. 
\begin{proposition}\label{minimalcritical}
Let $k\geq 2$ be even and suppose $(M^n,g)$ is Einstein.  If $\Si^k$ is a
minimal immersed submanifold of $(M,g)$, then $\Si$ is critical for $\cE$.    
\end{proposition}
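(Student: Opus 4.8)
The plan is to reduce the statement to the vanishing of the obstruction field $\cH$ and then exhibit an honestly minimal $Y$ by hand. By Theorem~\ref{variation}, every variation $F_t$ of $\Si$ satisfies $\dot{\cE}=-\int_\Sigma\langle\dot F,\cH\rangle_g\,da_\Sigma$, so it suffices to show that $\cH=0$ whenever $(M,g)$ is Einstein and $\Si$ is $g$-minimal.

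Normalize the Einstein condition as $\Ric(g)=(n-1)\kappa\,g$. The first step is to record that
\[
g_+=\frac{dr^2+\big(1-\tfrac{\kappa}{4}r^2\big)^2 g}{r^2}
\]
is a genuine Poincar\'e metric in normal form relative to $g$: a direct computation (see also \cite{FG}) shows $\Ric(g_+)=-ng_+$, so in particular $\Ric(g_+)+ng_+=O(r^{n-2})$, and here $g_r=\phi(r)^2 g$ with $\phi(r)=1-\tfrac{\kappa}{4}r^2$. Since the construction of $\cH$ in Theorem~\ref{U} only involves the coefficients $g^{(2j)}$ of $g_r$ with $2j\le k<n$, which are uniquely determined, we may compute $\cH$ using this particular $g_+$. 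Writing $\gb=r^2g_+=dr^2+\phi^2 g$, I take $Y$ as in \eqref{Y} with $U_r\equiv 0$, i.e. $Y=\Si\times[0,\ep_0)$ immersed via $(p,r)\mapsto(f(p),r)$: this is a legitimate choice in Theorem~\ref{U} ($U_r=O(r^2)$, even in $r$, orthogonal to $M$ along $\Si$), and $Y$ is smooth up to $r=0$.

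The heart of the argument is then to check that this $Y$ is exactly $g_+$-minimal, so that $\cH=r^{-k-2}H_Y|_{r=0}=0$ at once. First I would compute the second fundamental form of $\Si\times[0,\ep_0)$ in the warped product $(X,\gb)$. Since $\partial_r$ is a unit geodesic field for $\gb$ and, by O'Neill's formula, $\bar\nabla_W\partial_r=(\phi'/\phi)W\in T\Si$ for $W\in T\Si$, the $\partial_r$--$\partial_r$ and $\partial_r$--$T\Si$ blocks of the second fundamental form of $Y$ vanish; and $\bar\nabla_V W=\nabla^g_V W-\phi\phi'\,g(V,W)\,\partial_r$ for $V,W\in T\Si$ shows that the remaining $T\Si$--$T\Si$ block is precisely the $g$-second fundamental form $L$ of $\Si\subset M$. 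Tracing with respect to the induced metric $\phi^{-2}g_\Si\oplus 1$ gives $H_Y^{\gb}=\phi^{-2}H=0$, because $\Si$ is $g$-minimal. Second, passing to $g_+=e^{2\omega}\gb$ with $\omega=-\log r$, the mean curvature vector transforms by the standard conformal rule $H_Y^{g_+}=e^{-2\omega}\big(H_Y^{\gb}-(k+1)(\operatorname{grad}_{\gb}\omega)^{\perp}\big)$; but $\operatorname{grad}_{\gb}\omega=-r^{-1}\partial_r$ is tangent to $Y$, so its normal projection is zero and $H_Y=H_Y^{g_+}=0$ identically. Hence $\cH=0$, and by Theorem~\ref{variation}, $\dot{\cE}=0$ for every variation, i.e. $\Si$ is critical for $\cE$.

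The step I expect to require the most care is the bookkeeping justifying that the explicit Einstein $g_+$ may be used — that is, that $\cH$ as built in Theorem~\ref{U} depends only on $g_r$ modulo $O(r^n)$ (equivalently on the coefficients $g^{(0)},g^{(2)},\dots,g^{(k)}$, all of order strictly below $n$), so that it is insensitive to the choice among Poincar\'e metrics and may be evaluated on the distinguished Einstein one where $Y=\Si\times[0,\ep_0)$ is manifestly minimal. Everything else is the short warped-product and conformal-change computation sketched above.
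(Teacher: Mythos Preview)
Your proof is correct and follows essentially the same strategy as the paper: use the explicit Poincar\'e metric for an Einstein background, show that $U_r\equiv 0$ gives an exactly minimal $Y$, conclude $\cH=0$, and invoke Theorem~\ref{variation}. The only cosmetic difference is in how minimality of $Y$ is verified: the paper substitutes $s=-\log r$ so that $g_+$ itself becomes a warped product $ds^2+A(s)^2g$ and then cites the general fact that $\Si\times\R$ is minimal in such a warped product (or alternatively checks $u=0$ in \eqref{Mform} directly), whereas you compute the second fundamental form in the warped product $\gb=dr^2+\phi^2 g$ and then pass to $g_+$ by the conformal-change formula for the mean curvature. Both routes are short and equivalent; your concern about the ``bookkeeping'' (that $\cH$ depends only on $g_r$ modulo $O(r^n)$ and hence may be computed from the explicit Einstein model) is legitimate and is handled in the paper simply by appealing to the uniqueness statement in the Poincar\'e-metric construction.
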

\begin{proof}
The Poincar\'e metric for an Einstein metric can be written explicitly:
if $\Ric(g)=2\lambda(n-1)g$, then $g_r=(1-\lambda r^2/2)^2g$ (see
\cite{FG}).  So 
\[
g_+=r^{-2}\big(dr^2+(1-\lambda r^2/2)^2g\big)=ds^2+(e^s-\lambda
e^{-s}/2)^2g,\qquad s=-\log r. 
\]
It is easy to verify the general fact that if $\Si$ is a minimal
submanifold of a Riemannian manifold $(M,g)$, 
then $\Si\times \R$ is a minimal submanifold of $M\times \R$ with
respect to any warped product metric of the form $g_+=ds^2+A(s)g$, where
$s$ denotes the variable in $\R$ and $A(s)$ is a positive function.  Thus 
when $g$ is Einstein and $\Si$ is minimal, the minimal extension $Y$ in  
Theorem~\ref{U} is simply $Y=\Si\times \R$.  That is, the  
corresponding normal field is $U_r=0 \mod O(r^{k+2})$.  (An alternate,
equivalent way to see this is simply to note that if $g_r=B(r)g$ for some
positive function $B(r)$ and $\Si$ is minimal, then $u=0$ solves $\cM(u)=0$
exactly, where recall $\cM(u)$ is given by \eqref{Mform}).  

Since $U_r=0$ has no log term in its expansion, it must be that $\cH=0$.
By Theorem~\ref{variation}, it follows that $\Si$ is critical for $\cE$.   
\end{proof}

\noindent
Proposition~\ref{minimalcritical} implies in particular that minimal 
submanifolds of $\R^n$ or $S^n$ are critical for $\cE$.  

\section{Derivation of Formulas}\label{formulas}

In this section we derive formulas for the renormalized area coefficients
$a^{(2)}$, $a^{(4)}$ in the expansion \eqref{areaexpand}, and for the
coefficients $U_{(2)}$, $U_{(4)}$ in the expansion \eqref{Uexpand} of $U_r$.  This gives 
formulas for the energy $\cE$ for $k=2$, $4$ by integration, and for the 
obstruction $\cH$ for $k=2$.  We also use Theorem~\ref{variation} and a 
formalism of Guven to identify $\cH$ for $k=4$ when $(M,g)$ is a Euclidean 
space.  

The coefficients $U_{(2j)}$ are determined by solving the equation
$\cM(u)=0$ inductively order by order, where $\cM(u)$ is given by
\eqref{Mform}.   
Using the fact that $u^{\al'}{}_{,r}=0$ at $r=0$, one sees easily that all
terms on the right-hand side of \eqref{Mform} are $O(r^2)$ except for 
the first and third.  Thus
$$
(r\pa_r-(k+1))(\hb^{00}g_{\al'\ga'}u^{\al'}{}_{,r})
-\tfrac12 r\hb^{\al\be}g_{\al\be.\ga'}=O(r^2).
$$
Applying $\pa_r|_{r=0}$ and raising an index gives
$ku^{\al'}{}_{,rr} =-\frac12 g^{\al'\ga'}g^{\al\be}g_{\al\be,\ga'}$ at
$r=0$.  Therefore
\begin{equation}\label{U2}
U_{(2)}=\frac{1}{2k}H.
\end{equation}

We next turn to the identification of $a^{(2)}$ and $a^{(4)}$.  We will
return later to the determination of $U_{(4)}$ by further differentiation
of \eqref{Mform}.   

\begin{proposition}\label{a's}
If $k\geq 1$, then 
\[
\begin{split}
 a^{(2)}=-\frac12 & \left(\frac{k-1}{k^2}|H|^2+P^\al{}_\al\right)\\  
a^{(4)} =\frac{1}{8k^2} & \Bigg( |\nabla H|^2 -
L^{\al'}_{\al\be}L^{\al\be}_{\be'}H_{\al'}H^{\be'}
+\frac{k^2-2k-1}{k^2} |H|^4\\
& -W^\al{}_{\al'\al\be'}H^{\al'}H^{\be'}  
-2k g^{\al\be}{}^M\nabla_{\al'}P_{\al\be} H^{\al'}   
-4k P^\al{}_{\al'}\nabla_\al H^{\al'}\\ 
&+(2k-3)P^\al{}_\al|H|^2 -(k+4)P_{\al'\be'}H^{\al'}H^{\be'} \Bigg) \\
+&\frac18 \Big( -P^{\al\be}P_{\al\be}+ P^{\al\al'}P_{\al\al'}+(P^\al{}_\al)^2 
-\frac{1}{n-4}B^\al{}_\al\Big)\\
+&\frac{4-k}{k}H_{\al'}U^{\al'}_{(4)}
\end{split}
\]
\end{proposition}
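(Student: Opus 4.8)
The plan is to compute the left side of \eqref{areaexpand} directly through order $r^4$, using three inputs. The first is the Fefferman--Graham expansion of the Poincar\'e metric (\cite{FG}): as a family of tensors on $M$,
\[
g_r = g - r^2 P + r^4 g_{(4)} + O(r^6),\qquad g_{(4)ij} = \tfrac14 P_{ik}P_j{}^k - \tfrac{1}{4(n-4)}B_{ij}.
\]
Restricting to $\Si$ and tracing tangentially will supply the line $\tfrac18\big(-P^{\al\be}P_{\al\be}+P^{\al\al'}P_{\al\al'}+(P^\al{}_\al)^2-\tfrac{1}{n-4}B^\al{}_\al\big)$ once it is combined with the lower-order products below. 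The second input is the Taylor expansion in $u$ of $g_{\al\be}(x,u,r)$, $g_{\al\al'}(x,u,r)$, $g_{\al'\be'}(x,u,r)$ in the Fermi coordinates of Theorem~\ref{U}: its coefficients are the standard polynomials in the second fundamental form $L$, the ambient curvature, and its normal derivative, coming from the Riccati/Gauss--Codazzi expansion of a metric along a submanifold; in particular $\pa_{\ga'}g_{\al\be}|_{u=0}=-2L_{\al\be\ga'}$ and the quadratic term contributes $L_{\al\ga\ga'}L_\be{}^\ga{}_{\de'}u^{\ga'}u^{\de'}-R_{\al\ga'\be\de'}u^{\ga'}u^{\de'}$, to be split through \eqref{curv}. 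The third input is the expansion $u^{\al'}(x,r)=\tfrac{1}{2k}H^{\al'}r^2+U^{\al'}_{(4)}r^4+O(r^6)$ from \eqref{U2}, with $U_{(4)}$ as yet undetermined. Substituting the second and third inputs into \eqref{hbar} gives $\hb_{\al\be}$, $\hb_{\al 0}$, $\hb_{00}$ as explicit power series in $r$.

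The first simplification is that $u^{\al'}{}_{,r}=O(r)$ and $g_{\al\al'}(x,0,r)=-r^2P_{\al\al'}+O(r^4)$, so that $\hb_{\al 0}=O(r^3)$; hence in the Schur-complement formula for the block determinant the correction is $O(r^6)$, and through order $r^4$ one has simply $\det\hb = \hb_{00}\det(\hb_{\al\be}) + O(r^6)$. Writing $M:=g^{-1}\big(\hb_{\al\be}(\cdot,r)-g_{\al\be}(\cdot,0,0)\big)=M^{(2)}r^2+M^{(4)}r^4+O(r^6)$ and $\hb_{00}=1+b^{(2)}r^2+b^{(4)}r^4+O(r^6)$, then
\[
\sqrt{\frac{\det\hb(\cdot,r)}{\det\hb_{\al\be}(\cdot,0)}}=\sqrt{\hb_{00}}\,\exp\!\Big(\tfrac12\tr\log(I+M)\Big),
\]
so that $a^{(2)}=\tfrac12 b^{(2)}+\tfrac12\tr M^{(2)}$ and $a^{(4)}=\tfrac12 b^{(4)}-\tfrac18\big(b^{(2)}\big)^2+\tfrac14 b^{(2)}\tr M^{(2)}+\tfrac12\tr M^{(4)}-\tfrac14\tr\big(M^{(2)}\big)^2+\tfrac18\big(\tr M^{(2)}\big)^2$. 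For $a^{(2)}$ this is immediate: $b^{(2)}=\tfrac{1}{k^2}|H|^2$ from $\hb_{00}$, and $\tr M^{(2)}=-P^\al{}_\al-\tfrac1k|H|^2$, the second term coming from $g^{\al\be}\pa_{\ga'}g_{\al\be}(x,0,0)U^{\ga'}_{(2)}=-2H_{\ga'}\cdot\tfrac{1}{2k}H^{\ga'}$; this gives the stated $a^{(2)}$.

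For $a^{(4)}$ the work is to collect, with the correct combinatorial weights, every contribution to the six terms above: the quadratic-in-$L$ terms ($|\na H|^2$ from $g_{\al'\be'}u^{\al'}{}_{,\al}u^{\be'}{}_{,\be}$ in $\hb_{\al\be}$, and $|L^tH|^2=L^{\al'}_{\al\be}L^{\al\be}_{\be'}H_{\al'}H^{\be'}$ from $\tr(M^{(2)})^2$ together with the Riccati term of $g_{\al\be}$); the linear-in-curvature terms (the $-R_{\al\ga'\be\de'}$ Fermi term, split via \eqref{curv} into the $W^\al{}_{\al'\al\be'}H^{\al'}H^{\be'}$, $P^\al{}_\al|H|^2$ and $P_{\al'\be'}H^{\al'}H^{\be'}$ pieces); the normal Schouten-derivative terms ($g^{\al\be}{}^M\na_{\al'}P_{\al\be}H^{\al'}$ from the $r^2u$ term of $g_{\al\be}(x,u,r)$, and $P^\al{}_{\al'}\na_\al H^{\al'}$ from $g_{\al'\al}(x,0,r)u^{\al'}{}_{,\be}$); the Schouten-quadratic terms from $g_{(4)}$ and from the products $\tr(M^{(2)})^2$, $(\tr M^{(2)})^2$ and $b^{(2)}\tr M^{(2)}$; and the linear-in-$U_{(4)}$ terms, namely $\tfrac8k H_{\al'}U^{\al'}_{(4)}$ inside $b^{(4)}$ and $-2H_{\al'}U^{\al'}_{(4)}$ inside $\tr M^{(4)}$, which assemble to $\big(\tfrac4k-1\big)H_{\al'}U^{\al'}_{(4)}=\tfrac{4-k}{k}H_{\al'}U^{\al'}_{(4)}$ and account for the last line.

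I expect the conceptual content to be light and the main obstacle to be the bookkeeping: organizing the nested Taylor expansions so that each scalar invariant of the right weight is produced exactly once, carefully distinguishing (as in \S\ref{notation}) ${}^M\na$ of a restricted ambient tensor from $\na$ of its restriction, and tracking the $\pa$-versus-$\na$ corrections from the normal frame $e_{\al'}$ and the induced connections (these are what convert the raw $\pa_\al H^{\al'}$-type expressions into $|\na H|^2$ and the like). Matching the outcome term by term against the stated formula is then the final check; a convenient partial verification is the case $k=4$, in which $a^{(4)}=a^{(k)}$ is the energy integrand and the $U_{(4)}$-dependence drops out because its coefficient $\tfrac{4-k}{k}$ vanishes, so that $a^{(k)}$ is then expressible purely in terms of the submanifold geometry of $\Si$ in $(M,g)$.
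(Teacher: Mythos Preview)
Your plan is essentially the paper's own argument: expand $\hb$ via \eqref{hbar} using the Fefferman--Graham expansion of $g_r$, the normal Taylor expansion of $g_{ij}(x,u,0)$, and $u^{\al'}=\tfrac{1}{2k}H^{\al'}r^2+U_{(4)}^{\al'}r^4+\cdots$; observe $\hb_{\al 0}=O(r^3)$ so $\det\hb=\hb_{00}\det\hb_{\al\be}+O(r^6)$; then Taylor-expand the square root of the determinant. Your $\exp\big(\tfrac12\tr\log(I+M)\big)$ is algebraically the same expansion the paper writes out directly, and your formula for $a^{(4)}$ in terms of $b^{(2)},b^{(4)},\tr M^{(2)},\tr M^{(4)},\tr(M^{(2)})^2,(\tr M^{(2)})^2$ matches the paper's exactly under the dictionary $E=b^{(2)}$, $F=b^{(4)}$, $D=M^{(2)}$, $Q=M^{(4)}$.

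The one organizational device the paper uses that you only allude to (``tracking the $\pa$-versus-$\na$ corrections'') is worth stating explicitly: rather than carrying Christoffel corrections through the whole computation, the paper fixes a point $p\in\Si$ and chooses the tangential coordinates $x^\al$ and the normal frame $e_{\al'}$ so that $g_{\al\be,\ga}$, $g_{\al\al',\be}$, $g_{\al'\be',\al}$, $g_{\al\al',\be'}$, $g_{\al'\be',\ga'}$ all vanish at $p$ (their Lemma preceding the proof). This kills most of the connection terms at $p$, so that e.g.\ $H^{\al'}{}_{,\al}=\nabla_\al H^{\al'}$ and $P_{\al\be,\al'}={}^M\nabla_{\al'}P_{\al\be}-2L_{\ga(\al}^{\al'}P_{\be)}{}^\ga$ there, and the raw partial-derivative output of the expansion can be read off directly as the invariant expressions in the statement. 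Since both sides of the claimed identity are coordinate-invariant, verifying it at $p$ in these adapted coordinates suffices. Adopting this trick will make your ``bookkeeping'' step dramatically shorter.
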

In the last line of the expression for $a^{(4)}$, one should substitute the
formula for 
$U_{(4)}$ given in Proposition~\ref{U4prop} and combine like terms.
However, we are primarily interested in $a^{(4)}$ for $k=4$, when it  
is the integrand for $\cE$.  When $k=4$, the $U_{(4)}$ term does not
appear.  Consequently we have left the expression in the above form.   

Proposition~\ref{a's} will be proved by calculating in special coordinates.   
Recall the geodesic normal coordinate systems $(x,u)$ on $M$ near $\Si$
constructed at the beginning of the proof of Theorem~\ref{U} which are  
associated to a   
choice of local coordinates $x^\al$ for $\Sigma$ and a choice of frame
$e_{\al'}$ for $N\Si$.  Given $p\in \Si$, choose the $x^{\al}$ so that
$g_{\al\be,\ga}(p)=0$ for $1\leq \al,\be,\ga \leq k$.  (For instance,
take $x^\al$ to be geodesic normal coordinates at $p$ for the
induced metric on $\Si$.)  Choose the frame $e_{\al'}$ so that 
$\nabla_\al e_{\al'}(p)=0$ for $1\leq \al \leq k$, $1\leq \al'\leq n-k$,
where $\nabla_\al$ denotes the normal bundle connection on $N\Si$.  
\begin{lemma}\label{derivsvanish}
In such coordinates $(x,u)$, the following all vanish at $p$: 
$$
g_{\al\be,\ga}\qquad g_{\al\al',\be}\qquad g_{\al'\be',\al}
\qquad g_{\al\al',\be'} \qquad g_{\al'\be',\ga'}.
$$
At $p$ we also have 
\begin{equation}\label{whichone}
g_{\al\be,\al'}= -2 L_{\al\be\al'}
\end{equation}
and
\begin{equation}\label{R}
g_{\al\be,\al'\be'}=2R_{\al'(\al\be)\be'}+L_{\al\ga\be'}L_{\be}{}^\ga{}_{\al'} 
+L_{\be\ga\be'}L_{\al}{}^\ga{}_{\al'}.
\end{equation}
If $v$ is a section of $N\Si$, then at $p$ we have
\begin{equation}\label{hess}
\na_\al\na_\be
v^{\al'}=\big(v^{\al'}{}_{,\be\al}+g^{\al'\ga'}g_{\be\ga',\be'\al}v^{\be'}\big).  
\end{equation}
\end{lemma}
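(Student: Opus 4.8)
Here is the plan.

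The plan is to compute everything directly in the fixed coordinate system $(x,u)$, leaning on two structural facts and then reading off each assertion. The first fact: since each curve $t\mapsto(x_0,tu_0)$ is a $g$-geodesic, the geodesic equation gives $\Gamma^i_{\al'\be'}(x,u)\,u^{\al'}u^{\be'}=0$ identically in $(x,u)$, and the degree-two part in $u$ forces $\Gamma^i_{\al'\be'}(x,0)=0$ for all $i$; thus along $\Si$ all Christoffel symbols with two normal lower indices vanish, so ${}^M\na_{\pa_{\al'}}\pa_{\be'}$ vanishes on $\Si$, and using the standard identity $g_{ab,c}=\Gamma_{a,bc}+\Gamma_{b,ac}$ with all indices normal one gets $g_{\al'\be',\ga'}=0$ along $\Si$. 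The second fact records the defining properties of the coordinates on $\Si=\{u=0\}$: $g_{\al\al'}(x,0)\equiv0$, $g_{\al\be}(x,0)$ is the induced metric, $g_{\al'\be'}(x,0)=\langle e_{\al'},e_{\be'}\rangle_g$, $\pa_{\al'}|_\Si=e_{\al'}$, together with the Weingarten-type identity
\[
{}^M\na_{\pa_\al}\pa_{\be'}|_\Si={}^M\na_{\pa_\al}e_{\be'}=-L_\al{}^\ga{}_{\be'}\pa_\ga+\na_\al e_{\be'},
\]
obtained by differentiating $\langle\pa_\be,\pa_{\be'}\rangle=0$ along $\Si$ and using $L(X,Y)=({}^M\na_X Y)^\perp$. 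Finally the normalizations at $p$ contribute $g_{\al\be,\ga}(p)=0$ (geodesic normal coordinates for the induced metric) and $\na_\al e_{\be'}(p)=0$ (parallel normal frame), the latter also giving $g_{\al'\be',\ga}(p)=\pa_\ga\langle e_{\al'},e_{\be'}\rangle(p)=0$.

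Given these, the first list falls out. The vanishing of $g_{\al\be,\ga}(p)$ and $g_{\al'\be',\al}(p)$ is among the normalizations; $g_{\al\al',\be}(p)=0$ comes from differentiating $g_{\al\al'}(x,0)\equiv0$ tangentially; and $g_{\al'\be',\ga'}(p)=0$ was just observed. For $g_{\al\al',\be'}(p)=0$ I would compute $\pa_{\be'}g_{\al\ga'}|_\Si=\langle{}^M\na_{\pa_\al}\pa_{\be'},\pa_{\ga'}\rangle|_\Si$, the second Leibniz term dropping because ${}^M\na_{\pa_{\be'}}\pa_{\ga'}$ vanishes on $\Si$; the Weingarten identity turns the right side into $-L_\al{}^\eta{}_{\be'}\,g_{\eta\ga'}(x,0)+\langle\na_\al e_{\be'},e_{\ga'}\rangle$, and both summands vanish at $p$ — the first since $g_{\eta\ga'}(x,0)=0$, the second by the parallel frame. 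Formula \eqref{whichone} is the same computation without a normal derivative: $\pa_{\al'}g_{\al\be}|_\Si=\langle{}^M\na_{\pa_\al}\pa_{\al'},\pa_\be\rangle+\langle\pa_\al,{}^M\na_{\pa_\be}\pa_{\al'}\rangle=-L_{\al\be\al'}-L_{\be\al\al'}=-2L_{\al\be\al'}$, in fact valid everywhere on $\Si$.

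For the Hessian formula \eqref{R}, I would differentiate $g_{\al\be}=\langle\pa_\al,\pa_\be\rangle$ twice in the normal directions, expanding by metric compatibility and using that the relevant coordinate fields commute. At $p$ this yields four terms. Two of them are pairings of two first covariant derivatives of the type ${}^M\na_{\pa_\al}\pa_{\al'}$; by the Weingarten identity and $\na_\al e_{\al'}(p)=0$ they become $L_{\al\ga\be'}L_\be{}^\ga{}_{\al'}$ and $L_{\be\ga\be'}L_\al{}^\ga{}_{\al'}$, the two quadratic terms in \eqref{R}. In each of the other two a nested second covariant derivative ${}^M\na_{\pa_{\al'}}{}^M\na_{\pa_{\be'}}(\,\cdot\,)$ of a tangential coordinate field appears; rewriting ${}^M\na_{\pa_{\be'}}\pa_\al={}^M\na_{\pa_\al}\pa_{\be'}$, invoking the definition of the curvature operator, and using that ${}^M\na_{\pa_{\al'}}\pa_{\be'}$ vanishes on $\Si$ (so its tangential derivative vanishes at $p$) collapses this to $R(\pa_{\al'},\pa_\al)\pa_{\be'}$; the two terms then combine, via the pair and first Bianchi symmetries, into $2R_{\al'(\al\be)\be'}$. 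I expect this last step to be the only genuinely delicate one: the index placement and sign of $R$ must be matched to the convention of \eqref{curv}, which I would pin down with a sanity check — e.g. a totally geodesic $S^k\subset S^n$, where the parallel submanifolds scale by $\cos|u|$ and \eqref{R} must reproduce $g_{\al\be,\al'\be'}(p)=-2g_{\al'\be'}(p)\,g_{\al\be}(p)$.

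Finally, \eqref{hess} follows by expanding $\na_\al\na_\be(v^{\al'}e_{\al'})$ in the frame and discarding every term carrying an undifferentiated $\na e_{\al'}$, which vanishes at $p$; since the induced Christoffel symbols also vanish at $p$, $\na_\al\na_\be v^{\al'}$ is unambiguous there, and what remains is $\na_\al\na_\be v^{\al'}|_p=v^{\al'}{}_{,\be\al}|_p+v^{\be'}\,\pa_\al[(\na_\be e_{\be'})^{\al'}]|_p$. Reading the identity $\langle{}^M\na_{\pa_\be}e_{\be'},e_{\eta'}\rangle=\pa_{\be'}g_{\be\eta'}|_\Si$ (again from ${}^M\na_{\pa_{\be'}}\pa_{\eta'}$ vanishing on $\Si$) in the frame gives $(\na_\be e_{\be'})^{\ga'}=g^{\ga'\eta'}(x,0)\,g_{\be\eta',\be'}(x,0)$ along $\Si$; differentiating this product along $\pa_\al$ at $p$, the term where $\pa_\al$ hits $g^{\ga'\eta'}$ drops because $g_{\be\eta',\be'}(p)=g_{\al\al',\be'}(p)=0$, leaving $\pa_\al[(\na_\be e_{\be'})^{\al'}]|_p=g^{\al'\ga'}(p)\,g_{\be\ga',\be'\al}(p)$, exactly the asserted extra term. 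The overall structure is thus: establish the two structural facts, then everything except the curvature bookkeeping in \eqref{R} is routine manipulation of Leibniz rules.
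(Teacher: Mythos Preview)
Your proof is correct and rests on the same structural facts as the paper's: the vanishing of $\Gamma^i_{\al'\be'}$ along $\Si$ from the radial geodesic condition, the pointwise normalizations $g_{\al\be,\ga}(p)=0$ and $\na_\al e_{\al'}(p)=0$, and the Weingarten relation.  The arguments differ only in packaging at two places.  For the simultaneous vanishing of $g_{\al'\be',\al}(p)$ and $g_{\al\al',\be'}(p)$, you compute each separately via Leibniz and Weingarten, whereas the paper combines the identities $\Gamma^\ga_{\al'\be'}=0$ on $\Si$ and $\Gamma^{\be'}_{\al\al'}(p)=0$ into two linear relations whose left- and right-hand sides have opposite $\al'\be'$-symmetry, forcing both to vanish.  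For \eqref{R}, you differentiate $\langle\pa_\al,\pa_\be\rangle$ twice invariantly and invoke the curvature operator, while the paper substitutes directly into the coordinate formula $-2R_{ijkl}=g_{ik,jl}+\cdots-2g^{pq}(\Gamma_{ilp}\Gamma_{jkq}-\Gamma_{ikp}\Gamma_{jlq})$ and symmetrizes.  Your route is a shade more conceptual and avoids that formula, at the cost (which you note) of having to pin down the sign convention for $R_{ijkl}$; the paper's route makes the sign automatic but requires the auxiliary identity \eqref{christderiv}.  Either way the content is the same.
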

\begin{proof}
The $x$ coordinates were chosen so that $g_{\al\be,\ga}=0$ at $p$.  We have  
$g_{\al\al',\be}=0$ on all of $\Si$ since $g_{\al\al'}=0$ on $\Si$ and
$\pa_\be$ acts tangentially.  Recall from the construction of
$(x,u)$ that each curve $t\mapsto (x,tu)$ is a geodesic.  This implies in
particular that 
$\Ga^{\ga}_{\al'\be'}=0$ and $\Ga^{\ga'}_{\al'\be'}=0$ on $\Si$.  The  
latter equation is equivalent to $g_{\al'\be',\ga'}=0$ on $\Si$.  The
former is equivalent to
\begin{equation}\label{gderivs}
g_{\al'\be',\al}=g_{\al\al',\be'}+g_{\al\be',\al'}\qquad \text{on } \Si. 
\end{equation}
Since $e_{\al'}=\pa_{\al'}$ on $\Si$, the equation $\nabla_\al
e_{\al'}(p)=0$ is equivalent to 
$\Gamma_{\al\al'}^{\be'}(p)=0$, which is equivalent to
$g_{\al'\be',\al}=g_{\al\al',\be'}-g_{\al\be',\al'}$ at $p$. 
The
left-hand side is symmetric in $\al'\be'$ and the right-hand side is skew,
so both must vanish.  
Combining with \eqref{gderivs}, we conclude that in fact 
$g_{\al'\be',\al}=g_{\al\al',\be'}=0$ at $p$.   

Equation \eqref{whichone} holds on all of $\Si$ in any coordinates $(x,u)$
for which 
$\Si=\{u=0\}$ and $\pa_\al\perp\pa_{\al'}$ on $\Si$.  In fact, in this case 
$\na_{\pa_\al}\pa_{\be}=\Ga_{\al\be}^{\ga}\pa_\ga
+\Ga_{\al\be}^{\ga'}\pa_{\ga'}$, so 
$L_{\al\be}^{\ga'}=\Ga_{\al\be}^{\ga'}=-\frac12
g^{\ga'\al'}g_{\al\be,\al'}$.  

For \eqref{R}, the curvature tensor is given in 
local coordinates by
$$
-2R_{ijkl}=g_{ik,jl}+g_{jl,ik}-g_{il,jk}-g_{jk,il}
-2g^{pq}(\Ga_{ilp}\Ga_{jkq}-\Ga_{ikp}\Ga_{jlq}). 
$$
The Christoffel symbols all vanish at $p$ except for 
\begin{equation}\label{christoffel}
\Ga_{\al\be\al'}=L_{\al\be\al'},\qquad\qquad
\Ga_{\al\al'\be}=\Ga_{\al'\al\be}=-L_{\al\be\al'}.
\end{equation}  
So specializing the indices and evaluating at $p$ gives
$$
-2R_{\al\al'\be\be'}=g_{\al\be,\al'\be'}+g_{\al'\be',\al\be}-g_{\al\be',\be\al'}-g_{\be\al',\al\be'} 
-2g^{\ga\de}L_{\al\ga\be'}L_{\be\de\al'}.
$$
Symmetrizing in $\al\be$ gives
$$
2R_{\al'(\al\be)\be'}=g_{\al\be,\al'\be'}+
\Sym_{\al\be}\big[(g_{\al'\be',\be}-g_{\be\be',\al'}-g_{\be\al',\be'}),{}_\al\big] 
-L_{\al\ga\be'}L_\be{}^\ga{}_{\al'}-L_{\be\ga\be'}L_\al{}^\ga{}_{\al'}.  
$$
But 
\begin{equation}\label{christderiv}
(g_{\al'\be',\be}-g_{\be\be',\al'}-g_{\be\al',\be'}){}_{,\al}=0 \qquad
  \text{on } \Si 
\end{equation}
since $g_{\al'\be',\be}-g_{\be\be',\al'}-g_{\be\al',\be'}=0$ on
$\Si$ by \eqref{gderivs}, and $\pa_\al$ acts tangentially.  Thus 
\eqref{R} holds.   

For \eqref{hess}, write 
$\na_\be v^{\al'}=v^{\al'}{}_{,\be}+\Ga_{\be\be'}^{\al'} v^{\be'}$, apply
$\na_{\al}$, and expand the right-hand side again in terms of partial
derivatives and Christoffel symbols.  Using the fact that all Christoffel
symbols vanish at $p$ except for \eqref{christoffel}, one obtains at $p$:
$$
\na_\al\na_\be
v^{\al'}=v^{\al'}{}_{,\be\al}+\Ga_{\be\be'}^{\al'}{}_{,\al}v^{\be'}
=v^{\al'}{}_{,\be\al}+\frac12 
g^{\al'\ga'}\big(g_{\ga'\be',\be}+g_{\be\ga',\be'}-g_{\be\be',\ga'}\big){}_{,\al}v^{\be'}.  
$$
But \eqref{christderiv} gives 
$g_{\ga'\be',\be\al}=\big(g_{\be\be',\ga'}+g_{\be\ga',\be'}\big){}_{,\al}$ 
on $\Si$, so substituting yields \eqref{hess}.  
\end{proof}

\noindent
{\it Proof of Proposition~\ref{a's}.} 
Since the definition \eqref{invariantexpand} of the $a^{(2j)}$ 
and the formulae in Proposition~\ref{a's} are coordinate-invariant, it   
suffices to prove them at $p$ in the coordinates $(x,u)$ 
constructed above.  Choose the $x$ coordinates on $\Si$ so that $x(p)=0$.
Then $p$ is represented by $(0,0,0)$ in the coordinates $(x,u,r)$ on $X$.  

The induced metric components are given by \eqref{hbar}, and according to  
\eqref{areaexpand}, we need to calculate the Taylor expansion of 
$\sqrt{\det \hb(x,r)}$.  Now $g_{ij}$ is given (\cite{FG}) by
\begin{equation}\label{poincare}
g_{ij}(x,u,r)=g_{ij}(x,u,0) -P_{ij}(x,u,0)r^2 + \wt{B}_{ij}(x,u,0)r^4+\ldots,
\end{equation}
where
\begin{equation}\label{bach}
\wt{B}_{ij}=\frac14\left[\frac{B_{ij}}{4-n}+P_i{}^kP_{kj}\right].
\end{equation}
By \eqref{U2}, we have 
\begin{equation}\label{u}
u^{\al'}(x,r)=\tfrac{1}{2k}H^{\al'}(x)r^2+u_{(4)}^{\al'}(x)r^4+\ldots.
\end{equation}
All $g_{ij}$ in \eqref{hbar} are evaluated at $(x,u(x,r),r)$.  Since 
$g_{\al\al'}(x,0,0)=0$, it follows from \eqref{poincare} and \eqref{u} 
that $g_{\al\al'}(x,u(x,r),r)=O(r^2)$.  Then \eqref{hbar} shows  
that $\hb_{\al 0}=O(r^3)$.  Thus
\begin{equation}\label{dethb}
\det\hb = (\det \hb_{\al\be})\cdot \hb_{00} +O(r^6).
\end{equation}

Staring at \eqref{hbar} and recalling that $u=O(r^2)$, it is clear 
that in order to evaluate $\hb_{\al\be}$ and $\hb_{00}$ through order
$r^4$, we need to know $g_{\al\be}$ through order $r^4$ and $g_{\al\al'}$
and $g_{\al'\be'}$ through order $r^2$.  Taking $x=0$, corresponding to the 
point $p$, we have 
\begin{equation}\label{gprime}
\begin{split}
g_{\al'\be'}(0,u(0,r),r)&=g_{\al'\be'}(0,u(0,r),0)-P_{\al'\be'}r^2+o(r^2)\\
&=g_{\al'\be'}(0,0,0)+g_{\al'\be',\ga'}(0,0,0)u^{\ga'}(0,r)
-P_{\al'\be'}r^2+o(r^2)\\
&=g_{\al'\be'}(0,0,0)
-P_{\al'\be'}r^2+o(r^2),
\end{split}
\end{equation}
where we used $g_{\al'\be',\ga'}(0,0,0)=0$ from Lemma~\ref{derivsvanish}.  
Likewise
$$
g_{\al\al'}(0,u(0,r),r)=-P_{\al\al'}r^2+o(r^2).
$$
Now \eqref{poincare} gives 
\begin{equation}\label{gexpand}
g_{\al\be}(0,u(0,r),r)=g_{\al\be}(0,u(0,r),0)
-P_{\al\be}(0,u(0,r),0)r^2+\wt{B}_{\al\be}r^4+o(r^4).
\end{equation}
Expanding the first term and substituting from Lemma~\ref{derivsvanish} and
\eqref{u} gives 
\[
\begin{split}
g_{\al\be}(0,u(0,r),0)&=
g_{\al\be}(0,0,0)+g_{\al\be,\al'}(0,0,0)u^{\al'}(0,r)
+\tfrac12 g_{\al\be,\al'\be'}(0,0,0)u^{\al'}u^{\be'}+o(r^4)\\
&=g_{\al\be}(0,0,0)-2L_{\al\be\al'}\left(\tfrac{1}{2k}H^{\al'}r^2
+u_{(4)}^{\al'}r^4\right)\\
&\qquad +\tfrac{1}{4k^2}\left(R_{\al'\al\be\be'}
+L_{\al\ga\be'}L_\be{}^\ga{}_{\al'}\right)H^{\al'}H^{\be'}r^4+o(r^4)\\  
&=g_{\al\be}(0,0,0)-\tfrac{1}{k}L_{\al\be\al'}H^{\al'}r^2\\
&\qquad +\left(-2L_{\al\be\al'}u_{(4)}^{\al'}+\tfrac{1}{4k^2}   
\left(R_{\al'\al\be\be'}
+L_{\al\ga\be'}L_\be{}^\ga{}_{\al'}\right)H^{\al'}H^{\be'}\right)r^4+o(r^4). 
\end{split}
\]
For use in the second term in \eqref{gexpand}, we have 
$$
P_{\al\be}(0,u(0,r),0)=P_{\al\be}(0,0,0) +P_{\al\be,\al'}u^{\al'}+o(r^2)
=P_{\al\be}(0,0,0) +\tfrac{1}{2k}P_{\al\be,\al'}H^{\al'}r^2+o(r^2).
$$
Substituting these into \eqref{gexpand} gives
\[
\begin{split}
g_{\al\be}&(0,u(0,r),r)=g_{\al\be}(0,0,0)
+\left(-\tfrac{1}{k}L_{\al\be\al'}H^{\al'}-P_{\al\be}(0,0,0)\right)r^2\\  
&+\left(-2L_{\al\be\al'}u_{(4)}^{\al'}+\tfrac{1}{4k^2} 
\left(R_{\al'\al\be\be'}
+L_{\al\ga\be'}L_\be{}^\ga{}_{\al'}\right)H^{\al'}H^{\be'}-\tfrac{1}{2k}P_{\al\be,\al'}H^{\al'} 
+\wt{B}_{\al\be}\right)r^4+o(r^4).
\end{split}
\]

Now substitute all these into \eqref{hbar}.  Henceforth, all $g_{ij}$ and 
$P_{ij}$ are understood to be evaluated at $p$.  One obtains
\begin{equation}\label{hb}
\begin{split}
\hb_{\al\be}=&g_{\al\be}+D_{\al\be}r^2+Q_{\al\be}r^4+o(r^4)\\
\hb_{00}=&1+Er^2 +F r^4+o(r^4)
\end{split}
\end{equation}
with
\begin{equation}\label{hbcoeffs}
\begin{split}
D_{\al\be}=&-\tfrac{1}{k}L_{\al\be}^{\al'}H_{\al'}-P_{\al\be}\\
Q_{\al\be}=&-2L_{\al\be\al'}u_{(4)}^{\al'}
+\tfrac{1}{4k^2} R_{\al'\al\be\be'}H^{\al'}H^{\be'}
+\tfrac{1}{4k^2} L_{\al\ga}^{\al'}L_{\be}{}^\ga{}_{\be'}H_{\al'}H^{\be'}\\
&-\tfrac{1}{2k}P_{\al\be,\al'}H^{\al'}
+\wt{B}_{\al\be}
-\tfrac{1}{k}P_{\al'(\al} H^{\al'}{}_{,\be)}
+\tfrac{1}{4k^2}g_{\al'\be'}H^{\al'}{}_{,\al}H^{\be'}{}_{,\be}\\
E=&\quad\tfrac{1}{k^2}|H|^2\\
F=&-\tfrac{1}{k^2}P_{\al'\be'}H^{\al'}H^{\be'}
+\tfrac{8}{k}H_{\al'}u_{(4)}^{\al'}.
\end{split}
\end{equation}
However, again by Lemma~\ref{derivsvanish} and using \eqref{christoffel},
at $p$ we have $H^{\al'}{}_{,\al}=\nabla_\al H^{\al'}$ and 
$P_{\al\be,\al'}={}^M\na_{\al'}P_{\al\be} -L_{\al\ga\al'}P^\ga{}_{\be}
-L_{\be\ga\al'}P^\ga{}_{\al}$.  So 
$P_{\al\be,\al'}H^{\al'}={}^M\na_{\al'}P_{\al\be}H^{\al'}
-2L_{\ga(\al}^{\al'}P_{\be)}^{\vphantom{\al'}}{}^\ga H_{\al'}$.  Thus the
formula for $Q_{\al\be}$ above becomes  
\begin{equation}\label{Q}
\begin{split}
Q_{\al\be}=&-2L_{\al\be\al'}u_{(4)}^{\al'}
+\tfrac{1}{4k^2} R_{\al'\al\be\be'}H^{\al'}H^{\be'}
+\tfrac{1}{4k^2} L_{\al\ga}^{\al'}L_{\be}{}^\ga{}_{\be'}H_{\al'}H^{\be'}
-\tfrac{1}{2k}{}^M\na_{\al'}P_{\al\be}H^{\al'}\\
&+\tfrac{1}{k}L_{\ga(\al}^{\al'}P_{\be)}^{\vphantom{\al'}}{}^\ga H_{\al'}
+\wt{B}_{\al\be}
-\tfrac{1}{k}P_{\al'(\al} \na_{\be)} H^{\al'}
+\tfrac{1}{4k^2}g_{\al'\be'}\na_\al H^{\al'}\na_{\be}H^{\be'}.
\end{split}
\end{equation}

We need to calculate $\det{\hb_{\al\be}}$ to use in \eqref{dethb}.  Taylor
expanding the determinant function shows that for  
$\hb_{\al\be}$ of the form \eqref{hb}, we have 
$$
\det{\hb_{\al\be}}
=\det{g_{\al\be}}\left[1+D_\al{}^\al r^2
+\left(Q_\al{}^\al
-\tfrac12 D_{\al\be}D^{\al\be} 
+\tfrac12 (D_\al{}^\al)^2\right)r^4+o(r^4)\right].  
$$
Multiplying by $\hb_{00}$ and recalling \eqref{dethb}, we get 
\begin{equation}\label{det}
\begin{split}
\frac{\det{\hb}}{\det{g_{\al\be}}}
&=\frac{\det{\hb_{\al\be}}}{\det{g_{\al\be}}}\cdot \hb_{00}+O(r^6)\\
&=1+\left(D_\al{}^\al+E\right)r^2 +\left(Q_\al{}^\al-\tfrac12
D_{\al\be}D^{\al\be} +\tfrac12 (D_\al{}^\al)^2+F+ED_\al{}^\al\right)r^4
+o(r^4). 
\end{split}
\end{equation}
Finally, using $\sqrt{1+x}=1+\frac12 x -\frac18 x^2 +o(x^2)$ gives 
\[
\begin{split}
\sqrt{\frac{\det{\hb}}{\det{g_{\al\be}}}}
= 1 &+\tfrac12 \left(D_\al{}^\al+E\right)r^2 \\
&+\tfrac12 \left(Q_\al{}^\al-\tfrac12 D_{\al\be}D^{\al\be} 
+\tfrac14 (D_\al{}^\al)^2+F+\tfrac12 ED_\al{}^\al-\tfrac14 E^2\right)r^4
+o(r^4).
\end{split}
\]
Recalling \eqref{areaexpand} and that $\hb_{\al\be}=g_{\al\be}$ when 
$r=0$, we conclude  
\[
\begin{split}
a^{(2)}&= \tfrac12 \left(D_\al{}^\al+E\right)\\
a^{(4)}&= \tfrac12 \left(Q_\al{}^\al-\tfrac12 D_{\al\be}D^{\al\be} 
+\tfrac14 (D_\al{}^\al)^2+F+\tfrac12 ED_\al{}^\al-\tfrac14 E^2\right).  
\end{split}
\]
The formula for $a^{(2)}$ in Proposition~\ref{a's} follows upon
substituting \eqref{hbcoeffs} for $D_{\al\be}$ and $E$.  To obtain the
formula for $a^{(4)}$, one 
substitutes \eqref{hbcoeffs} for $D_{\al\be}$, $E$, and $F$, \eqref{Q} for 
$Q_{\al\be}$, \eqref{bach} for $\wt{B}_{\al\be}$ in \eqref{Q}, writes 
$R_{\al'\al\be\be'}$ in \eqref{Q} in terms of the Weyl and Schouten tensors
via \eqref{curv}, and collects terms.    
\stopthm

\begin{corollary}\label{Eformcor}
If $k=2$, then 
$$
\cE=-\frac18 \int_\Si \big(|H|^2 +4P^\al{}_\al\big)\,\,da_\Si
$$
If $k=4$, then 
\[
\begin{split}
\cE=\frac{1}{128}\int_\Si \Bigg(& |\nabla H|^2
-L_{\al\be}^{\al'}L^{\al\be}_{\be'}H_{\al'}H^{\be'}
+\frac{7}{16} |H|^4\\
&-W^\al{}_{\al'\al\be'}H^{\al'}H^{\be'}-8P^{\al}{}_{\al'}\nabla_\al
H^{\al'}
-8C^\al{}_{\al\al'}H^{\al'}-8P^{\al\be}L^{\al'}_{\al\be}H_{\al'} 
+5P^\al{}_{\al}|H|^2 \\
&-16P^{\al\be}P_{\al\be}+16 P^{\al\al'}P_{\al\al'}
+16(P^\al{}_\al)^2 -\frac{16}{n-4}B^\al{}_\al\Bigg)\,\,da_\Si
\end{split}
\]
\end{corollary}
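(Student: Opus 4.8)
The plan is to obtain both formulas by specializing Proposition~\ref{a's} to $k=2$ and $k=4$, integrating over $\Si$ via $\cE=\int_\Si a^{(k)}\,da_\Si$ from \eqref{areacoeffs}, and---in the $k=4$ case---disposing of the leftover derivative terms by an integration by parts. The case $k=2$ is immediate: setting $k=2$ in the formula for $a^{(2)}$ gives $a^{(2)}=-\tfrac12\big(\tfrac14|H|^2+P^\al{}_\al\big)$, and integrating over the compact $\Si$ yields $\cE=-\tfrac18\int_\Si(|H|^2+4P^\al{}_\al)\,da_\Si$.

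For $k=4$ I would first set $k=4$ in the formula for $a^{(4)}$. Since $\tfrac{4-k}{k}=0$ the $U_{(4)}$ term drops out, the coefficients $\tfrac{k^2-2k-1}{k^2},\ 2k,\ 4k,\ 2k-3,\ k+4$ become $\tfrac7{16},\,8,\,16,\,5,\,8$, and the prefactors are $\tfrac1{8k^2}=\tfrac1{128}$ and $\tfrac18=\tfrac{16}{128}$. Write the result as $\tfrac1{128}\mathcal{A}$ and write the claimed integrand as $\tfrac1{128}\mathcal{B}$. The terms $|\nabla H|^2$, $L^{\al'}_{\al\be}L^{\al\be}_{\be'}H_{\al'}H^{\be'}$, $|H|^4$, $W^\al{}_{\al'\al\be'}H^{\al'}H^{\be'}$, $P^\al{}_\al|H|^2$, and all the quadratic-in-$P$ and Bach terms occur identically in $\mathcal{A}$ and $\mathcal{B}$; cancelling them leaves
\[
\mathcal{A}-\mathcal{B}=-8g^{\al\be}\,{}^M\nabla_{\al'}P_{\al\be}H^{\al'}-8P^\al{}_{\al'}\nabla_\al H^{\al'}-8P_{\al'\be'}H^{\al'}H^{\be'}+8C^\al{}_{\al\al'}H^{\al'}+8P^{\al\be}L^{\al'}_{\al\be}H_{\al'}.
\]
So it suffices to show $\int_\Si(\mathcal{A}-\mathcal{B})\,da_\Si=0$.

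The heart of the argument is to recognize $\mathcal{A}-\mathcal{B}$ as a divergence on $\Si$, for which two ingredients are needed. First, the definition $C_{ijk}={}^M\nabla_kP_{ij}-{}^M\nabla_jP_{ik}$ of the Cotton tensor gives $C^\al{}_{\al\al'}=g^{\al\be}\,{}^M\nabla_{\al'}P_{\al\be}-g^{\al\be}\,{}^M\nabla_\al P_{\be\al'}$, which I would use to replace $g^{\al\be}\,{}^M\nabla_{\al'}P_{\al\be}H^{\al'}$ by $C^\al{}_{\al\al'}H^{\al'}+g^{\al\be}\,{}^M\nabla_\al P_{\be\al'}H^{\al'}$; the two $g^{\al\be}\,{}^M\nabla_{\al'}P_{\al\be}H^{\al'}$ terms then cancel. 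Second, a Gauss-type identity relating the divergence of the restricted mixed Schouten tensor, computed with the connections induced on $T\Si$ and $N\Si$, to the restriction of the ambient covariant derivative:
\[
g^{\al\be}\,{}^M\nabla_\al P_{\be\be'}=\nabla_\al P^\al{}_{\be'}-P_{\al'\be'}H^{\al'}+P^{\al\be}L_{\al\be\be'}.
\]
I would derive this by the argument of Lemma~\ref{derivsvanish}: at a point $p$ in the adapted coordinates $(x,u)$ the induced Christoffel symbols vanish and only $\Ga_{\al\be\al'}=L_{\al\be\al'}$, $\Ga_{\al\al'\be}=-L_{\al\be\al'}$ survive, so $\nabla_\al P_{\be\be'}$ and ${}^M\nabla_\al P_{\be\be'}$ differ by exactly these two second-fundamental-form corrections; tracing with $g^{\al\be}$ gives the displayed formula (the companion of the identity for $P_{\al\be,\al'}$ used in the proof of Proposition~\ref{a's}). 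Substituting this into $-8g^{\al\be}\,{}^M\nabla_\al P_{\be\al'}H^{\al'}$, the two $P_{\al'\be'}H^{\al'}H^{\be'}$ terms cancel and the two $P^{\al\be}L_{\al\be\al'}H^{\al'}$ terms cancel, leaving $\mathcal{A}-\mathcal{B}=-8\nabla_\al\big(P^\al{}_{\al'}H^{\al'}\big)$, the divergence of the tangent vector field $P^\al{}_{\al'}H^{\al'}$ on $\Si$. Since $\Si$ is compact and boundaryless, this integrates to zero, so $\cE=\int_\Si a^{(4)}\,da_\Si=\tfrac1{128}\int_\Si\mathcal{B}\,da_\Si$, as claimed.

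The only real obstacle here is organizational: reading the many index contractions off Proposition~\ref{a's} correctly---keeping primed versus unprimed indices and raised versus lowered indices straight, e.g.\ in $L^{\al'}_{\al\be}L^{\al\be}_{\be'}H_{\al'}H^{\be'}$ and $P^{\al\be}L^{\al'}_{\al\be}H_{\al'}$---and tracking the signs of the two connection-difference corrections. The underlying geometry is routine.
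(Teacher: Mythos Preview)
Your proof is correct and follows essentially the same route as the paper. Both reduce to showing that the difference between the integrand coming from Proposition~\ref{a's} and the claimed integrand integrates to zero; both use the Cotton-tensor identity $C^\al{}_{\al\al'}=g^{\al\be}{}^M\nabla_{\al'}P_{\al\be}-g^{\al\be}{}^M\nabla_\al P_{\be\al'}$ together with the connection-difference formula $\nabla_\al P_{\be\al'}={}^M\nabla_\al P_{\be\al'}+L_{\al\be}^{\be'}P_{\be'\al'}-L_{\al\al'}^\ga P_{\be\ga}$ to close the computation. The only cosmetic difference is ordering: the paper integrates $P^\al{}_{\al'}\nabla_\al H^{\al'}$ by parts first and then applies the connection-difference identity, whereas you apply both identities pointwise and exhibit $\mathcal{A}-\mathcal{B}$ explicitly as the divergence $-8\nabla_\al(P^\al{}_{\al'}H^{\al'})$. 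One small slip of the pen: after your Cotton substitution it is the two $C^\al{}_{\al\al'}H^{\al'}$ terms that cancel, not two copies of $g^{\al\be}{}^M\nabla_{\al'}P_{\al\be}H^{\al'}$.
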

\begin{proof}
Recall that $\cE=\int_\Si a^{(k)} da_\Si$ for $\Si$ of dimension $k$.  
The result for $k=2$ follows immediately upon setting $k=2$ in
Proposition~\ref{a's} and integrating.  For $k=4$, integrating the formula
in Proposition~\ref{a's} and comparing with that in
Corollary~\ref{Eformcor} shows that the result
reduces to the following identity:
\begin{equation}\label{int}
\begin{split}
\int_\Si \Big(
g^{\al\be}{}^M\na_{\al'}P_{\al\be}H^{\al'}+P^\al{}_{\al'}\na_{\al}H^{\al'} 
&+P_{\al'\be'}H^{\al'}H^{\be'}\Big)da_\Si \\
&=
\int_\Si
\left(C^\al{}_{\al\al'}H^{\al'}+P^{\al\be}L_{\al\be}^{\al'}H_{\al'}\right) da_\Si.
\end{split}
\end{equation}
We intend to integrate the $\na_\al$ by parts in the second term on the
left-hand side to obtain $-\na_\al P^\al{}_{\al'} H^{\al'}$.   
This $\na_\al$ denotes the normal bundle connection, so we are obliged here
to interpret $P^\al{}_{\al'}$ as a section of $T\Si\otimes N^*\Si$ on 
$\Si$ and $\na_\al$ as the induced connection on this bundle.
Recalling \eqref{christoffel}, we have 
$$
\na_\al P_{\be\al'}={}^M\na_\al P_{\be\al'}
+\Ga_{\al\be}^{\be'}P_{\be'\al'} +\Ga_{\al\al'}^\ga P_{\be\ga}
={}^M\na_\al P_{\be\al'}
+L_{\al\be}^{\be'}P_{\be'\al'} -L_{\al\al'}^\ga P_{\be\ga}.
$$
Thus
$$
-\na_\al P^\al{}_{\al'} H^{\al'} = -g^{\al\be}{}^M\na_\al P_{\be\al'} H^{\al'}
-P_{\al'\be'}H^{\al'}H^{\be'}+L_{\al\be}^{\al'} P^{\al\be}H_{\al'}.
$$
So integrating by parts as described and substituting for    
$-\na_\al P^\al{}_{\al'} H^{\al'}$, one concludes that the
left-hand side of \eqref{int} equals
\[
\begin{split}
\int_\Si \Big(
g^{\al\be}{}^M\na_{\al'}P_{\al\be}H^{\al'}
-g^{\al\be}{}^M\na_\al P_{\be\al'} H^{\al'}
&+L_{\al\be}^{\al'} P^{\al\be}H_{\al'}\Big)
da_\Si \\
&=\int_\Si 
\left(C^\al{}_{\al\al'}H^{\al'}+P^{\al\be}L_{\al\be}^{\al'}H_{\al'}\right)
da_\Si
\end{split}
\]
as desired.
\end{proof}
\begin{remark}
The coefficients in Proposition~\ref{a's} and Corollary~\ref{Eformcor}
become simpler when written in terms of the alternate convention for the 
mean curvature:  $\Hb = \frac{1}{k}H$.  In particular, one has 
\[
\begin{split}
a^{(4)} =\frac{1}{8} & \Bigg( |\nabla \Hb|^2 -
L^{\al'}_{\al\be}L^{\al\be}_{\be'}\Hb_{\al'}\Hb^{\be'}
+(k^2-2k-1)|\Hb|^4\\
& -W^\al{}_{\al'\al\be'}\Hb^{\al'}\Hb^{\be'}  
-2 g^{\al\be}{}^M\nabla_{\al'}P_{\al\be} \Hb^{\al'}   
-4 P^\al{}_{\al'}\nabla_\al \Hb^{\al'}\\ 
&+(2k-3)P^\al{}_\al|\Hb|^2 -(k+4)P_{\al'\be'}\Hb^{\al'}\Hb^{\be'} \\
& -P^{\al\be}P_{\al\be}+ P^{\al\al'}P_{\al\al'}+(P^\al{}_\al)^2 
-\frac{1}{n-4}B^\al{}_\al\Bigg)\\
&+(4-k)\Hb_{\al'}U^{\al'}_{(4)} 
\end{split}
\]
and for $k=4$
\[
\begin{split}
\cE=\frac{1}{8}\int_\Si \Big(& |\nabla \Hb|^2
-L_{\al\be}^{\al'}L^{\al\be}_{\be'}\Hb_{\al'}\Hb^{\be'}
+7 |\Hb|^4\\
&-W^\al{}_{\al'\al\be'}\Hb^{\al'}\Hb^{\be'}-2P^{\al}{}_{\al'}\nabla_\al
\Hb^{\al'}
-2C^\al{}_{\al\al'}\Hb^{\al'}-2P^{\al\be}L^{\al'}_{\al\be}\Hb_{\al'} 
+5P^\al{}_{\al}|\Hb|^2 \\
&-P^{\al\be}P_{\al\be}+ P^{\al\al'}P_{\al\al'}
+(P^\al{}_\al)^2 -\frac{1}{n-4}B^\al{}_\al\Big)\,\,da_\Si.
\end{split}
\]
\end{remark}

Recall that equation~\eqref{U2} identifies $U_{(2)}$.  The next proposition
identifies $U_{(4)}$.  

\begin{proposition}\label{U4prop}
If $k>2$, then
\[
\begin{split}
U_{(4)}^{\al'}=\frac{1}{8k(k-2)}&
\Bigg((\Delta H)^{\al'}
+L^{\al'}_{\al\be}L^{\al\be}_{\be'}H^{\be'}-\frac{2}{k^2}|H|^2H^{\al'} \\
&+W^{\al\al'}{}_{\al\be'}{}H^{\be'}
-P^\al{}_\al H^{\al'}+(k-4)P^{\al'}{}_{\be'} H^{\be'} 
+2kP^{\al\be}L_{\al\be}^{\al'}\\
&+k g^{\al'\be'}g^{\al\be}({}^M\nabla_{\be'}P_{\al\be}-2{}^M\nabla_\al
P_{\be\be'})\Bigg)  
\end{split}
\]

If $k=2$, then
\[
\begin{split}
\cH^{\al'}=\frac14 & \Bigg((\Delta H)^{\al'}
+L^{\al'}_{\al\be}L^{\al\be}_{\be'}H^{\be'}-\frac12|H|^2H^{\al'} \\
&+W^{\al\al'}{}_{\al\be'}{}H^{\be'}
-P^\al{}_\al H^{\al'}-2P^{\al'}{}_{\be'} H^{\be'} 
+4P^{\al\be}L_{\al\be}^{\al'}\\
&+2 g^{\al'\be'}g^{\al\be}({}^M\nabla_{\be'}P_{\al\be}-2{}^M\nabla_\al
P_{\be\be'})\Bigg)  
\end{split}
\]
\end{proposition}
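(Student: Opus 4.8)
The plan is to read off both $U_{(4)}$ (for $k>2$) and the obstruction $\cH$ (for $k=2$) from a single quantity: the coefficient of $r^3$ in the expansion of $\cM\big(U_{(2)}r^2\big)_{\ga'}$, where $\cM$ is the operator \eqref{Mform} and $U_{(2)}=\tfrac{1}{2k}H$ by \eqref{U2}. Indeed, by \eqref{inductive} with $m=4$, replacing $u$ by $u+U_{(4)}r^4$ changes $\cM(u)_{\ga'}$ at order $r^3$ only through the term $4(4-k-2)\,g_{\ga'\al'}U_{(4)}^{\al'}r^3$. Hence for $k>2$ the requirement $\cM\big(U_{(2)}r^2+U_{(4)}r^4\big)=O(r^4)$ forces $4(k-2)\,g_{\ga'\al'}U_{(4)}^{\al'}$ to equal the $r^3$-coefficient $c_{\ga'}$ of $\cM\big(U_{(2)}r^2\big)_{\ga'}$, while for $k=2$ that same coefficient equals $g_{\ga'\al'}\cH^{\al'}$ by \eqref{cH}. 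So it suffices to compute $c_{\ga'}$ and to verify that $g^{\al'\ga'}c_{\ga'}$ equals $\tfrac{1}{2k}$ times the bracketed expression in the statement; the prefactors $\tfrac{1}{8k(k-2)}$ and $\tfrac14$ are then $\tfrac{1}{2k}$ divided by $4(k-2)$ and by $1$ respectively.

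To compute $c_{\ga'}$ I would, as in the proof of Proposition~\ref{a's}, work at a point $p\in\Si$ in the adapted geodesic normal coordinates of Lemma~\ref{derivsvanish}, substitute $u^{\al'}(x,r)=\tfrac{1}{2k}H^{\al'}(x)\,r^2$ into \eqref{Mform}, and use \eqref{hbar} together with the Poincar\'e expansion \eqref{poincare}--\eqref{bach}. One first records, to the needed orders in $r$, the expansions of $u^{\al'}{}_{,r}$ and $u^{\al'}{}_{,\al}$, of $\hb_{\al\be}$, $\hb_{\al0}$, $\hb_{00}$ and their inverses, and of $\cL=\log\det\hb$ and $\cL_{,r}$ --- much of this is already assembled for Proposition~\ref{a's}. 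An order count then shows that the fourth and fifth lines of \eqref{Mform} are $O(r^5)$ and may be dropped, that the third line contributes in full, that in the first line only the $\hb^{00}g_{\al'\ga'}u^{\al'}{}_{,r}$ piece survives, and that in the second line only $r\,\pa_\be\big[\hb^{\al\be}\big(g_{\al\ga'}+g_{\al'\ga'}u^{\al'}{}_{,\al}\big)\big]$ survives; collecting the $r^3$-coefficients of these pieces gives $c_{\ga'}$ as a sum of coordinate expressions.

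The main obstacle is recognising this sum as the stated invariant. This requires rewriting the first $x$-derivatives at $p$ of the building blocks --- $\pa_\be H^{\al'}$, $\pa_\be\pa_\al H^{\al'}$, $\pa_\be P_{\al\ga'}$, $\pa_\be g_{\al\ga',\de'}|_{u=0}$, and so on --- covariantly, via \eqref{christoffel}, the Hessian identity \eqref{hess}, and the coordinate formula for $R_{ijkl}$ specialised to the various index types. In particular the rough Laplacian $(\Delta H)^{\al'}=g^{\al\be}\nabla_\al\nabla_\be H^{\al'}$ emerges from the second tangential derivative of $H$ inside $\pa_\be[\hb^{\al\be}(\cdots)]$, while the Weyl term $W^{\al\al'}{}_{\al\be'}H^{\be'}$ and the quadratic $L\,L$ and $P$ terms come from $g_{\al\be,\ga'\de'}$ and $g_{\al\ga',\de'\be}$ at $p$ after inserting \eqref{curv} and \eqref{bach} and using \eqref{christoffel} to pass from $P_{\al\be,\ga'}$, $P_{\al\ga',\be}$ to ${}^M\nabla_{\ga'}P_{\al\be}$, ${}^M\nabla_\al P_{\be\ga'}$; this last step also produces the combination $g^{\al'\be'}g^{\al\be}\big({}^M\nabla_{\be'}P_{\al\be}-2\,{}^M\nabla_\al P_{\be\be'}\big)$. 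A point needing care is that mixed second derivatives such as $g_{\be\ga',\al\be'}$ are not listed in Lemma~\ref{derivsvanish} and must first be solved for from \eqref{gderivs}, \eqref{christderiv} and the curvature formula. All intermediate expressions depend on the coordinate choices at $p$, but since the target is tensorial these drop out; it is the bookkeeping, not any conceptual point, that is the crux.

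As consistency checks: for $k=2$, Theorem~\ref{variation} says $\cH$ must be minus the first variation of $\cE=-\tfrac18\int_\Si\big(|H|^2+4P^\al{}_\al\big)\,da_\Si$ from Corollary~\ref{Eformcor}, so in particular its leading term must be $\tfrac14\Delta H$, which matches the formula; and the general-$k$ expression for $U_{(4)}$ must be compatible with $a^{(4)}$ of Proposition~\ref{a's}, whose last line $\tfrac{4-k}{k}H_{\al'}U^{\al'}_{(4)}$ is exactly where this $U_{(4)}$ feeds back in.
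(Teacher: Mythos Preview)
Your plan is correct and is essentially the paper's own argument: the paper likewise works at $p$ in the adapted coordinates of Lemma~\ref{derivsvanish}, applies $\partial_r^3|_{r=0}$ to $\cM(u)$ (equivalently, extracts the $r^3$-coefficient), performs the order count you describe to discard the last two lines of \eqref{Mform} and the $\hb^{0\al}$-pieces, and then rewrites the result covariantly using \eqref{hess}, \eqref{christoffel}, \eqref{R}, \eqref{curv} and the identity \eqref{sub}. One point you anticipate is in fact a non-issue: the mixed second derivatives $g_{\al\ga',\be\be'}$ that enter via \eqref{hess} and via $\partial_\be\partial_r^2 g_{\al\ga'}$ cancel against each other in the final expression, so you never need to evaluate them explicitly.
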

\begin{proof}
Just as in the proof of Proposition~\ref{a's}, since these formulas are  
coordinate-invariant, it suffices to prove them at $p$ in our special
adapted coordinates $(x,u)$.  Now $U_{(4)}$ is determined by applying
$\pa_r^3|_{r=0}$ to the equation $\cM(u)=0$, with $\cM(u)$ given by 
\eqref{Mform}.  Recall that in \eqref{Mform}, we have $\hb^{0\al}=O(r^3)$, 
$g_{\al\ga'}=O(r^2)$ and $u^{\al'}=O(r^2)$.  Therefore  
\[
\begin{aligned}
\cM (u)_{\ga'} = &
\left( r \partial_{r} - (k+1) + \frac{1}{2} r\cL_{,r} \right)
\left( \hb^{00} g_{\al'\ga'}u^{\al'}{}_{,r} 
\right)\\
& +r\left( \partial_{\be} + \frac{1}{2} \cL_{,\be} \right)
\left [ 
\hb^{\al\be} \left ( g_{\al\ga'} + g_{\al'\ga'}u^{\al'}{}_{,\al} 
\right ) \right ] \\
& -\frac{1}{2} r \hb^{\al\be}\left( g_{\al\be,\ga'} +
2g_{\al\al',\ga'}u^{\al'}{}_{,\be} \right) 
 -\frac{1}{2} r \hb^{00}
g_{\al'\be',\ga'}u^{\al'}{}_{,r}u^{\be'}{}_{,r}  +O(r^5).
\end{aligned}
\]
Apply $\pa_r^3|_{r=0}$ to the equation $\cM(u)=0$.  Keeping in mind the
orders of vanishing and the parity of the various terms and the fact that
$\hb^{00}=1$ on $\Si$, one obtains at $r=0$: 
\begin{equation}\label{alongtheway}
\begin{aligned}
0=& (2-k)\pa_r^3 \left( \hb^{00} g_{\al'\ga'}u^{\al'}{}_{,r} \right)
+3\cL_{,rr}g_{\al'\ga'}u^{\al'}{}_{,rr}\\ 
& +3\left( \partial_{\be} + \frac{1}{2} \cL_{,\be} \right)
\left [ 
\hb^{\al\be} \left ( \pa_r^2 g_{\al\ga'} + g_{\al'\ga'}u^{\al'}{}_{,\al rr}  
\right ) \right ] \\
& -\frac{3}{2}  \hb^{\al\be}\left( \pa_r^2 g_{\al\be,\ga'}
 + 2g_{\al\al',\ga'}u^{\al'}{}_{,\be rr} \right) 
-\frac32 \left(\pa_r^2 \hb^{\al\be}\right)g_{\al\be,\ga'} 
 -3g_{\al'\be',\ga'}u^{\al'}{}_{,rr}u^{\be'}{}_{,rr}.
\end{aligned}
\end{equation}
Expanding the derivatives gives
$$
\pa_r^3 \left( \hb^{00} g_{\al'\ga'}u^{\al'}{}_{,r}\right)
=g_{\al'\ga'}\pa_r^4 u^{\al'}+
3\left(g_{\al'\ga'}\pa_r^2 \hb^{00}+\pa_r^2g_{\al'\ga'} \right)
u^{\al'}{}_{,rr} 
$$
at $r=0$.
Since $\hb_{\al\be}=g_{\al\be}$ on $\Sigma$, it follows from
Lemma~\ref{derivsvanish} that 
$\cL_{,\be}=g^{\al\ga}g_{\al\ga,\be}=0$ at $p$.  Lemma~\ref{derivsvanish}
also implies that $\pa_\be \hb^{\al\be}$, 
$g_{\al\al',\ga'}$ and $g_{\al'\be',\ga'}$ vanish at $p$.  
So evaluating \eqref{alongtheway} at $p$ and solving for
$\pa_r^4 u^{\al'}$ yield
\begin{align*}
(k-2)g_{\al'\ga'}\pa_r^4 u^{\al'}=&
3(2-k)\left(g_{\al'\ga'}\pa_r^2 \hb^{00}+\pa_r^2g_{\al'\ga'}
\right)u^{\al'}{}_{,rr} 
+3\cL_{,rr}g_{\al'\ga'}u^{\al'}{}_{,rr}\\ 
& +3
g^{\al\be} \partial_{\be} \left ( \pa_r^2 g_{\al\ga'} 
+ g_{\al'\ga'}u^{\al'}{}_{,\al rr}  \right ) \\
& -\frac{3}{2}  g^{\al\be} \pa_r^2 g_{\al\be,\ga'}
-\frac32 \left(\pa_r^2 \hb^{\al\be}\right)g_{\al\be,\ga'}.
\end{align*}
Expanding the derivative on the second line and reordering terms gives 
\begin{equation}\label{furtheralong}
\begin{aligned}
(k-2)g_{\al'\ga'}\pa_r^4 u^{\al'}=&
3g_{\al'\ga'}g^{\al\be}u^{\al'}{}_{,rr\al\be} 
+3g^{\al\be}  \pa_\be\pa_r^2 g_{\al\ga'}  \\
&+
3(2-k)g_{\al'\ga'}\left(\pa_r^2 \hb^{00}\right)u^{\al'}{}_{,rr}
+3(2-k)\left(\pa_r^2g_{\al'\ga'}\right)u^{\al'}{}_{,rr} \\ 
&+3\cL_{,rr}g_{\al'\ga'}u^{\al'}{}_{,rr}
 -\frac{3}{2}  g^{\al\be} \pa_r^2 g_{\al\be,\ga'}
-\frac32 \left(\pa_r^2 \hb^{\al\be}\right)g_{\al\be,\ga'}.
\end{aligned}
\end{equation}

We have already evaluated many of the ingredients on the right-hand side.
For instance, $g_{\al\be,\ga'}=-2L_{\al\be\ga'}$ by \eqref{whichone} and
$\pa_r^2u^{\al'}=\frac{1}{k}H^{\al'}$ by \eqref{U2}.  Also
$\pa_r^2\hb^{00}=-2E$ and $\pa_r^2\hb^{\al\be}=-2D^{\al\be}$
by \eqref{hb}.  Similarly, taking the log and differentiating in
\eqref{det} gives $\cL_{,rr}=2(D_\al{}^\al +E)$.  We have 
$\pa_r^2 g_{\al'\ga'}=-2P_{\al'\ga'}$ by \eqref{gprime}.  The terms that
still need to be evaluated are $g^{\al\be}u^{\al'}{}_{,rr\al\be}$,
$\pa_\be\pa_r^2 g_{\al\ga'}$ and $\pa_r^2 g_{\al\be,\ga'}$.  Using
\eqref{hess}, we have at $p$:
$$
g^{\al\be}u^{\al'}{}_{,rr\al\be}=\frac{1}{k}g^{\al\be}H^{\al'}{}_{,\al\be}
=\frac{1}{k}\Delta H^{\al'}
-\frac{1}{k}g^{\al'\ga'}g^{\al\be}g_{\al\ga',\be\be'}H^{\be'}.    
$$
For $\pa_\be\pa_r^2 g_{\al\ga'}$ and $\pa_r^2 g_{\al\be,\ga'}$, recall that
in \eqref{Mform}, all components of 
$g$ and its derivatives which appear are evaluated at $(x,u(x,r,r)$.   We
can evaluate $\pa_r^2 g_{\al\be,\ga'}$ by the same procedure we used in the
proof of Proposition~\ref{a's}.  First differentiate \eqref{poincare} to
obtain 
$$
g_{\al\be,\ga'}(x,u,r)=g_{\al\be,\ga'}(x,u,0)-P_{\al\be,\ga'}r^2 +o(r^2).
$$
Evaluate at $u=u(x,r)$:  
\[
\begin{split}
g_{\al\be,\ga'}(x,u(x,r),r)=&g_{\al\be,\ga'}(x,u(x,r),0)-P_{\al\be,\ga'}r^2
+o(r^2)\\
=&g_{\al\be,\ga'}(x,0,0)+g_{\al\be,\ga'\be'}(x,0,0)u^{\be'}(x,r)-P_{\al\be,\ga'}r^2 +o(r^2) \\
=&g_{\al\be,\ga'}(x,0,0)+\frac{1}{2k}g_{\al\be,\ga'\be'}H^{\be'}r^2-P_{\al\be,\ga'}r^2+o(r^2).  
\end{split}
\]
Thus 
$$
\pa_r^2
g_{\al\be,\ga'}=\frac{1}{k}g_{\al\be,\ga'\be'}H^{\be'}-2P_{\al\be,\ga'}.
$$
For $\pa_\be\pa_r^2 g_{\al\ga'}$, begin as above by differentiating
\eqref{poincare}: 
$$
g_{\al\ga',\be}(x,u,r)=g_{\al\ga',\be}(x,u,0)-P_{\al\ga',\be}r^2 +o(r^2).
$$
So the chain rule gives
\[
\begin{split}
\pa_\be\big( g_{\al\ga'}&(x,u(x,r),r)\big)=g_{\al\ga',\be}(x,u(x,r),r)+ 
g_{\al\ga',\be'}(x,u(x,r),r)u^{\be'}{}_{,\be}(x,r)\\
=&g_{\al\ga',\be}(x,u(x,r),0)-P_{\al\ga',\be}r^2
+\frac{1}{2k}g_{\al\ga',\be'}(x,0,0)H^{\be'}{}_{,\be}r^2 +o(r^2)\\
=&g_{\al\ga',\be}(x,0,0)+g_{\al\ga',\be\be'}u^{\be'}(x,r)
-P_{\al\ga',\be}r^2
+\frac{1}{2k}g_{\al\ga',\be'}(x,0,0)H^{\be'}{}_{,\be}r^2 +o(r^2)\\
=&g_{\al\ga',\be}(x,0,0)+\frac{1}{2k}g_{\al\ga',\be\be'}H^{\be'}r^2
-P_{\al\ga',\be}r^2
+\frac{1}{2k}g_{\al\ga',\be'}(x,0,0)H^{\be'}{}_{,\be}r^2 +o(r^2).
\end{split}
\]
At $x=0$, corresponding to the point $p$, we have $g_{\al\ga',\be'}=0$.
Thus at $p$ we obtain
$$
\pa_\be\pa_r^2
g_{\al\ga'}=\frac{1}{k}g_{\al\ga',\be\be'}H^{\be'}-2P_{\al\ga',\be}.
$$

Now substitute all of these into \eqref{furtheralong} and raise the free
index.  The two terms involving $g_{\al\ga',\be\be'}$ cancel, and one
obtains
\begin{equation}\label{lastone}
\begin{aligned}
(k-2)\pa_r^4 u^{\al'}=&
\frac{3}{k}\Delta
H^{\al'}
+3g^{\al'\ga'}g^{\al\be}\big(P_{\al\be,\ga'} -2P_{\al\ga',\be} \big)
-\frac{6(2-k)}{k}E H^{\al'}
-\frac{6(2-k)}{k}P^{\al'}{}_{\be'}H^{\be'} \\ 
&+\frac{6}{k}\big(D^\al{}_{\al}+E\big)H^{\al'}
 -\frac{3}{2k} g^{\al'\ga'}g^{\al\be}g_{\al\be,\ga'\be'}H^{\be'}
-6 D^{\al\be}L_{\al\be}^{\al'}. 
\end{aligned}
\end{equation}
Expanding the covariant derivatives in terms of partial derivatives and
Christoffel symbols shows that at $p$:
\begin{equation}\label{sub}
g^{\al\be}\big(P_{\al\be,\ga'} -2P_{\al\ga',\be}\big)
=g^{\al\be}\big({}^M\nabla_{\ga'}P_{\al\be}-2{}^M\nabla_\be
P_{\al\ga'}\big) -2P_{\ga'\be'}H^{\be'}.
\end{equation}
The formula for $U_{(4)}$ in Proposition~\ref{U4prop} is obtained as
follows.  In \eqref{lastone}, substitute \eqref{sub} for
$g^{\al\be}\big(P_{\al\be,\ga'} -2P_{\al\ga',\be}\big)$, substitute
\eqref{hbcoeffs} for $D_{\al\be}$ and $E$, substitute \eqref{R} for
$g_{\al\be,\ga'\be'}$, write the resulting $R_{\ga'\al\be\be'}$ in terms of
the Weyl and Schouten tensors via \eqref{curv}, collect terms, and finally
note that $U_{(4)}=\frac{1}{24}\pa_r^4U|_{r=0}$.    

Equation \eqref{lastone} confirms that $\pa_r^3 \cM(u)|_{r=0}$ is
independent of $\pa_r^4 u^{\al'}|_{r=0}$ when $k=2$.  In this case, the above
calculation shows that $g^{\al'\ga'}\pa_r^3\cM(u)_{\ga'}|_{r=0}$ equals the 
right-hand side of \eqref{lastone}.  However, by \eqref{cH} we have
$$
g^{\al'\ga'}\pa_r^3\cM(u)_{\ga'}|_{r=0}=\pa_r^3(r^3\cH^{\al'})|_{r=0} =
6\cH^{\al'}.  
$$
This gives the formula for $\cH$ in Proposition~\ref{U4prop}.  
\end{proof} 

We have also calculated the variation for  
$k=4$ when the background $M$ is $\R^n$ with the Euclidean metric.  In 
\cite{Gu1}, \cite{Gu2}, 
Guven developed a very nice formalism for identifying the variation  
of functionals of hypersurfaces in Euclidean space, using Lagrange      
multipliers to encode the data of the embedding.  As he suggested, it is 
straightforward to extend his formalism to submanifolds of higher
codimension.  We refer to his papers for the details of the 
method and formulate the following consequences.  

Let $S(g_{\al\be},g_{\al'\be'},L_{\al\be\al'})$ be a smooth function of 
$g_{\al\be}\in S^2_+\R^k{}^*$, $g_{\al'\be'}\in S^2_+\R^{n-k}{}^*$, and 
$L_{\al\be\al'}\in S^2\R^k{}^*\otimes \R^{n-k}{}^*$.  Here $S^2_+\R^l{}^*$ 
denotes the cone of positive definite symmetric bilinear forms on $\R^l$.  
It is assumed that $S$ is invariant under the natural action of
$GL(k,\R)\times GL(n-k,\R)$ on
$S^2_+\R^k{}^*\oplus S^2_+\R^{n-k}{}^*\oplus (S^2\R^k{}^*\otimes
\R^{n-k}{}^*)$.  Let $\Sigma$ be a compact $k$-manifold and let 
\begin{equation}\label{cF}
\cF = \int_\Sigma S(g_{\al\be},g_{\al'\be'},L_{\al\be\al'})\,da_\Si
\end{equation}
be a functional on the space of immersions of $\Si$ into $\R^n$, where at
each point 
$g_{\al\be}$ is taken to be the induced metric, $g_{\al'\be'}$ the metric
on the normal 
space, and $L_{\al\be\al'}$ the second fundamental form with all indices
lowered, all written in some choice of local frames for $T\Si$ and for 
$N\Si$.     
By the assumed invariance of $S$, the integrand is independent of the
choice of frames and is a globally defined density on $\Si$ depending
on the immersion $f$. 

As in Theorem~\ref{variation}, let $F:\Si\times [0,\de)\rightarrow \R^n$ be
a variation of $\Si$ with infinitesimal variation $\dot{F}\in \Ga(f^*TM)$.
Guven's method shows that 
$$
\dot{\cF} = \int_\Sigma \langle \dot{F},\cK\rangle_g\,da_\Si,
$$
where $\cK$ is the section of $N\Si$ given by
\begin{equation}\label{guven}
\cK^{\al'}=\na_\al\na_\be\Big(\frac{\pa S}{\pa L_{\al\be\al'}}\Big)
+L_{\al\ga}^{\al'}L^\ga_{\be\be'}\frac{\pa S}{\pa L_{\al\be\be'}}
-SH^{\al'}.
\end{equation}
For instance, for $S=1$, $\cF$ is the area functional and $\cK=-H$ is the 
negative of the mean curvature.  For another example, the integrand for the 
Willmore energy is 
$$
S= |H|^2= g^{\al'\be'}g^{\al\be}g^{\ga\de}L_{\al\be\al'}L_{\ga\de\be'}.
$$
So $\frac{\pa S}{\pa L_{\al\be\al'}}=2g^{\al\be}H^{\al'}$, 
and \eqref{guven} gives the usual formula for its variation:
$$
\cK^{\al'}=2\Delta H^{\al'}+2L_{\al\be}^{\al'}L^{\al\be}_{\be'}H^{\be'}
- |H|^2H^{\al'}.
$$

We apply Guven's formalism to the energy functional $\cE$ for $k=4$.  
If $M=\R^n$, then all background curvature vanishes, so the formula in 
Corollary~\ref{Eformcor} for the $k=4$ energy becomes \eqref{Eeuclidean}. 
Consider separately the variation of each of the
three terms in the integrand.  For $S=|H|^4$, we have 
$\frac{\pa S}{\pa L_{\al\be\al'}}=4g^{\al\be}|H|^2 H^{\al'}$, so
$$
\cK^{\al'}=4\Delta\big(|H|^2
H^{\al'}\big)+4L_{\al\be}^{\al'}L^{\al\be}_{\be'}|H|^2H^{\be'}
-|H|^4H^{\al'}.   
$$
For $S=L_{\al\be}^{\al'}L^{\al\be}_{\be'}H_{\al'}H^{\be'}$, we have
$\frac{\pa S}{\pa L_{\al\be\al'}}
=2g^{\al\be}L_{\ga\de}^{\al'}L^{\ga\de}_{\be'}H^{\be'}
+2L^{\al\be\be'}H_{\be'}H^{\al'}$,  
so
\[
\begin{split}
\cK^{\al'}&=2\Delta\Big(L_{\al\be}^{\al'}L^{\al\be}_{\be'}H^{\be'}\Big)
+2\na_\al\na_\be\Big(L^{\al\be\be'}H_{\be'}H^{\al'}\Big)\\
&+2L_{\ga\de}^{\al'}L^{\ga\de}_{\be'}L_{\al\be\ga'}L^{\al\be\be'}H^{\ga'}
+2L_{\al\ga}^{\al'}L^\ga_{\be\be'}L^{\al\be\ga'}H^{\be'}H_{\ga'}
-L_{\al\be}^{\ga'}L^{\al\be}_{\be'}H_{\ga'}H^{\be'}H^{\al'}.
\end{split}
\]
In addition to functionals of the form \eqref{cF}, Guven's formalism 
applies to functionals involving covariant derivatives of
$L_{\al\be}^{\al'}$.  For $S=|\na H|^2$, it gives (see 
\cite{Gu2}): 
$$
\cK^{\al'}
=-2\Delta^2H^{\al'}+2L^{\al\be\al'}\na_\al H^{\be'}\na_\be H_{\be'}
-2L^{\al\be\al'}L_{\al\be\be'}\Delta H^{\be'}-|\na H|^2H^{\al'}.
$$
Recalling that Theorem~\ref{variation} identifies $\cH$ as the negative of
the variation of $\cE$ and combining the above ingredients 
yields the following.
\begin{proposition}\label{Hk=4}
When $k=4$ and $M=\R^n$ with the Euclidean metric, we have
\[
\begin{split}
128\, \cH^{\al'}&=
2\Delta^2H^{\al'}-2L^{\al\be\al'}\na_\al H^{\be'}\na_\be H_{\be'}
+2L^{\al\be\al'}L_{\al\be\be'}\Delta H^{\be'}+|\na H|^2H^{\al'}\\
&+2\Delta\Big(L_{\al\be}^{\al'}L^{\al\be}_{\be'}H^{\be'}\Big)
+2\na_\al\na_\be\Big(L^{\al\be\be'}H_{\be'}H^{\al'}\Big)\\
&+2L_{\ga\de}^{\al'}L^{\ga\de}_{\be'}L_{\al\be\ga'}L^{\al\be\be'}H^{\ga'}
+2L_{\al\ga}^{\al'}L^\ga_{\be\be'}L^{\al\be\ga'}H^{\be'}H_{\ga'}
-L_{\al\be}^{\ga'}L^{\al\be}_{\be'}H_{\ga'}H^{\be'}H^{\al'}\\
&-\frac74 \Delta\big(|H|^2H^{\al'}\big)
-\frac74 L_{\al\be}^{\al'}L^{\al\be}_{\be'}|H|^2H^{\be'}
+\frac{7}{16}|H|^4H^{\al'}.
\end{split}
\]
\end{proposition}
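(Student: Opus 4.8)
The plan is to assemble ingredients already in hand. By Corollary~\ref{Eformcor}, when $M=\R^n$ carries the Euclidean metric all of $W$, $P$, $C$ and $B$ vanish, so the energy reduces to $\cE=\frac{1}{128}\int_\Si\big(|\na H|^2-L_{\al\be}^{\al'}L^{\al\be}_{\be'}H_{\al'}H^{\be'}+\tfrac{7}{16}|H|^4\big)\,da_\Si$, which is \eqref{Eeuclidean}. Since the first variation is linear in the integrand, it suffices to variationally differentiate each of the three building-block functionals separately. Those three variations were computed just above by means of Guven's formula \eqref{guven}: writing $\cK_1$, $\cK_2$, $\cK_3\in\Ga(N\Si)$ for the sections attached by that formula to the integrands $|\na H|^2$, $L_{\al\be}^{\al'}L^{\al\be}_{\be'}H_{\al'}H^{\be'}$ and $|H|^4$, we have $\cK_1^{\al'}=-2\Delta^2H^{\al'}+2L^{\al\be\al'}\na_\al H^{\be'}\na_\be H_{\be'}-2L^{\al\be\al'}L_{\al\be\be'}\Delta H^{\be'}-|\na H|^2H^{\al'}$, the long expression displayed above for $\cK_2$, and $\cK_3^{\al'}=4\Delta(|H|^2H^{\al'})+4L_{\al\be}^{\al'}L^{\al\be}_{\be'}|H|^2H^{\be'}-|H|^4H^{\al'}$.

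The only point requiring care is the sign convention. Guven's formula delivers $\cK$ with $\dot{\cF}=+\int_\Si\langle\dot{F},\cK\rangle_g\,da_\Si$, whereas Theorem~\ref{variation} states $\dot{\cE}=-\int_\Si\langle\dot{F},\cH\rangle_g\,da_\Si$. Hence $\cH$ is the negative of the Guven variation of $\cE$, and by linearity $128\,\cH^{\al'}=-\cK_1^{\al'}+\cK_2^{\al'}-\tfrac{7}{16}\cK_3^{\al'}$.

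Substituting the three formulas above into this identity and multiplying through by the scalars $-1$, $+1$ and $-\tfrac{7}{16}$ reproduces the four lines of the claimed expression verbatim: the first line is $-\cK_1^{\al'}$, the second and third lines together are $\cK_2^{\al'}$, and the fourth line is $-\tfrac{7}{16}\cK_3^{\al'}$ (note $-\tfrac{7}{16}\cdot 4=-\tfrac74$). No further simplification is performed, since the $\Delta$, $\Delta^2$ and $\na_\al\na_\be$ operators are deliberately left unexpanded; the result then follows immediately once the bookkeeping is done. The main obstacle here is not conceptual but clerical -- keeping track of the sign and the overall factor $\tfrac{1}{128}$ without dropping anything, exactly the kind of slip noted earlier in connection with (61) of \cite{Gu2}.
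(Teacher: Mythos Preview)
Your proposal is correct and follows essentially the same approach as the paper: reduce $\cE$ to its Euclidean form \eqref{Eeuclidean}, apply Guven's formula \eqref{guven} to each of the three integrands separately, and then combine using Theorem~\ref{variation} to flip the sign. Your explicit bookkeeping $128\,\cH^{\al'}=-\cK_1^{\al'}+\cK_2^{\al'}-\tfrac{7}{16}\cK_3^{\al'}$ and the line-by-line identification with the displayed formula are exactly what the paper does implicitly.
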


\noindent
We checked this formula by evaluating it on the anchor ring embeddings
$T^{2,2}_{R,r}$ and $T^{3,1}_{R,r}$ discussed in 
\S\ref{anchor}.   It vanishes exactly for the values of $R$ and $r$
corresponding to the 
$\cE$-critical anchor ring embeddings described there.     

We close this section by noting that modulo linear terms in background
curvature and quadratic terms in derivatives of $g$, we have 
\[
\begin{split}
(2l)!\,U_{(2l)}^{\al'}&=\frac{1\cdot 3\cdot 5\cdots
  (2l-1)}{k(k-2)\cdots(k+2-2l)}\,\Delta^{l-1}H^{\al'} ,\qquad 
1\leq l\leq k/2\\
k!\,\cH^{\al'} &= \frac{1\cdot 3\cdot 5\cdots
  (k-1)}{k(k-2)\cdots 2}\,\Delta^{k/2}H^{\al'},\qquad k\geq 2
\end{split}
\]
These follow by induction keeping track only of the linear terms in the  
calculations indicated above.  Modulo terms involving background  
curvature and terms of higher homogeneity degree in the second fundamental
form and its derivatives, the energy has the form 
\[
\cE=c_k\int_\Si \langle H,\Delta^{(k-2)/2}H\rangle\,da_\Si,\qquad k\geq 2,
\qquad c_k\neq 0.   
\]

\section{Dimension Four}\label{4d} 

In this section we take $k=4$ throughout (so $n\geq 5$), and $M$ will be 
either $\R^n$ 
with the Euclidean metric or $S^n$ with the round metric of sectional 
curvature 1.  $\Sigma$ is a compact 4-dimensional immersed submanifold of
$M$.    

If $M=\R^n$, then all background curvature vanishes, so the formula in 
Corollary~\ref{Eformcor} for the $k=4$ energy becomes \eqref{Eeuclidean}.  
If $M=S^n$, then the Weyl, Cotton and Bach tensors  
all vanish, and the Schouten tensor is given by $P = \frac12 g$.  In this
case the formula in Corollary~\ref{Eformcor} reduces to \eqref{Esphere}. 
Set $\cEb = 128\, \cE$.  

Proposition~\ref{minimalcritical} shows that if $\Si$ is a 4-dimensional
immersed submanifold of either $\R^n$ or $S^n$ and $\Si$ is minimal, then
$\Si$ is critical for $\cE$.  As noted in the introduction, this also
follows immediately from \eqref{Eeuclidean} and \eqref{Esphere}, since all
terms are quadratic in $H$ except for the constant term in \eqref{Esphere},
which corresponds to a multiple of the area of $\Si$.  Of course there are
no compact minimal submanifolds of $\R^n$.  But the statement holds also 
for noncompact minimal submanifolds in the sense that the Euler-Lagrange
equation for $\cE$ holds, corresponding to compactly supported variations. 
Note that in the case that $\Si\subset S^n$ is minimal, we have
$\cEb(\Si)=48 \operatorname{Area}(\Si)$.

\subsection{Products of Spheres}\label{prod}
Proposition~\ref{minimalcritical} provides many examples of $\cE$-critical 
submanifolds.  Just as for the classical $k=2$ Willmore energy, minimal 
submanifolds of $S^n$ or their images in $\R^n$ under stereographic
projection are $\cE$-critical.  So a totally geodesic
$S^4\subset S^n$, or any round $S^4\subset \R^n$, is $\cE$-critical.  
Likewise, the standard examples of minimal embeddings of 4-dimensional
products of spheres in $S^n$ are $\cE$-critical.  These are:
\begin{equation}\label{minimalones}
\begin{gathered}
S^2\big(\tfrac{1}{\sqrt{2}}\big)\times
S^2\big(\tfrac{1}{\sqrt{2}}\big)\subset S^5\\ 
S^1\big(\tfrac{1}{2}\big)\times
S^3\big(\tfrac{\sqrt{3}}{2}\big)\subset S^5\\ 
S^1\big(\tfrac{1}{2}\big)\times S^1\big(\tfrac{1}{2}\big)\times 
S^2\big(\tfrac{1}{\sqrt{2}}\big)\subset S^6 \\
S^1\big(\tfrac{1}{2}\big)\times S^1\big(\tfrac{1}{2}\big)\times
S^1\big(\tfrac{1}{2}\big)\times S^1\big(\tfrac{1}{2}\big)\subset S^7
\end{gathered}
\qquad\qquad
\begin{aligned}
\cEb&=192\pi^2  \\
\cEb&=36\sqrt{3}\pi^3\\
\cEb&=96\pi^3\\
\cEb&=48\pi^4
\end{aligned}
\end{equation}

For each of the four topologies appearing in \eqref{minimalones}, there is 
a family of embeddings generalizing \eqref{minimalones} obtained by 
varying the radii of the factor spheres.  Namely, we have 
\begin{equation}\label{families}
\begin{gathered}
S^2(r_1)\times S^2(r_2) \subset S^5  \\
S^1(r_1)\times S^3(r_2)\subset S^5 \\
S^1(r_1)\times S^1(r_2)\times S^2(r_3)\subset S^6 \\
S^1(r_1)\times S^1(r_2)\times S^1(r_3)\times S^1(r_4)\subset S^7, 
\end{gathered}
\end{equation}
where in each case $\sum r_k^2 = 1$.  In this section we calculate
explicitly the energy of these embeddings as a function of the $r_k$.  We
deduce two consequences.  One consequence is that in the three 
families other than $S^2\times S^2\subset S^5$, $\cE$ is already unbounded
above and below when restricted to the family.  This will prove 
Proposition~\ref{no} in these cases.  The other consequence is that we
identify a non-minimal critical point of $\cE$ in each of the three
families other than $S^2\times S^2$.  Specifically, we prove
\begin{proposition}
In addition to \eqref{minimalones}, the following are critical for 
$\cE$: 
\begin{equation}\label{nonminimal}
\begin{gathered}
S^1\big(\sqrt{\tfrac38}\big)\times S^3\big(\sqrt{\tfrac58}\big)\subset S^5\\
S^1\big(\sqrt{\tfrac{5}{24}}\big)\times
S^1\big(\sqrt{\tfrac{9}{24}}\big)\times
S^2\big(\sqrt{\tfrac{10}{24}}\big)\subset S^6 \\
S^1\big(\sqrt{\tfrac{5}{24}}\big)\times
  S^1\big(\sqrt{\tfrac{5}{24}}\big)\times 
S^1\big(\sqrt{\tfrac{5}{24}}\big)\times 
S^1\big(\sqrt{\tfrac{9}{24}}\big)\subset S^7
\end{gathered}
\qquad
\begin{aligned}
\cEb&=16\sqrt{15}\pi^3  \\[.05in]
\cEb&=\tfrac{128\sqrt{5}}{3}\pi^3\\[.05in]
\cEb&=\tfrac{64\sqrt{5}}{3}\pi^4
\end{aligned}
\end{equation}
\end{proposition}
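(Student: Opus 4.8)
The plan is to evaluate $\cE$ explicitly on the four families \eqref{families} as a function of the radii, and then to note that, by symmetry, a member of such a family is critical for $\cE$ among all immersions of $\Si$ precisely when it is a critical point of this radial energy function; the configurations \eqref{minimalones} and \eqref{nonminimal} are then exhibited as the critical points. First I would record the submanifold geometry. Writing $S^n\subset\R^{n+1}=\R^{n_1+1}\times\dots\times\R^{n_j+1}$ and $\Si_{\vec r}=S^{n_1}(r_1)\times\dots\times S^{n_j}(r_j)$ with $\sum_i r_i^2=1$, one has $j=2,2,3,4$ in the four cases and $\sum_i n_i=4=n+1-j$ throughout. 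A short computation of the second fundamental form of $\Si_{\vec r}\subset S^n$ shows that at $p=(p_1,\dots,p_j)$ one has $L(X,Y)=\langle X,Y\rangle\,\xi_i$ whenever $X,Y$ are tangent to the $i$-th factor (and $L$ vanishes on vectors from different factors), where $\xi_i:=p-r_i^{-2}p_i$ lies in $N_p\Si_{\vec r}$. Consequently $H=\sum_i n_i\xi_i=-\sum_i c_i p_i$ with $c_i:=n_ir_i^{-2}-4$, and $|H|^2=\sum_i c_i^2r_i^2$, $|L^tH|^2=\sum_i n_ic_i^2$. Since each $\xi_i$, hence $H$, is parallel for the normal connection, $|\na H|^2=0$, and because $\Si_{\vec r}$ is homogeneous \eqref{Esphere} collapses to the explicit formula
$$
\cEb(\Si_{\vec r})=\Big(\tfrac{7}{16}|H|^4-|L^tH|^2+6|H|^2+48\Big)\,\operatorname{Vol}(\Si_{\vec r}),\qquad
\operatorname{Vol}(\Si_{\vec r})=\prod_i r_i^{n_i}\operatorname{Vol}(S^{n_i}),
$$
a rational function of the $r_i^2$ times a monomial, defined on $\{\sum_i r_i^2=1\}$.

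Next I would reduce criticality for $\cE$ to criticality of this function. The group $G=SO(n_1+1)\times\dots\times SO(n_j+1)$ acts on $S^n$ by isometries preserving $\Si_{\vec r}$, acts transitively on $\Si_{\vec r}$, and its isotropy at $p$ fixes each $p_i$, hence fixes $N_p\Si_{\vec r}$ pointwise. As $\cE$ is an isometry invariant, the obstruction field $\cH$ of $\Si_{\vec r}$ is $G$-invariant, so it vanishes on $\Si_{\vec r}$ if and only if it vanishes at $p$; moreover the variations of $\Si_{\vec r}$ obtained by moving the radii within $\{\sum_i r_i^2=1\}$ have infinitesimal generators $\dot F$ that, evaluated at $p$, run over all of $N_p\Si_{\vec r}$. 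Combining these two facts with Theorem~\ref{variation}, which gives $\dot\cE=-\int_\Si\langle\dot F,\cH\rangle\,da_\Si$, one concludes that $\Si_{\vec r}$ is critical for $\cE$ if and only if $\cH\equiv 0$, if and only if $\vec r$ is a critical point of $\cEb(\Si_{\vec r})$ subject to $\sum_i r_i^2=1$ — the instance of the principle of symmetric criticality relevant here. (As a cross-check, by the conformal invariance of $\cH$ in Proposition~\ref{invL} one may equivalently verify that $\cH$ vanishes on the stereographic images of the $\Si_{\vec r}$ in $\R^n$, using the explicit Euclidean formula in Proposition~\ref{Hk=4}.)

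Finally I would carry out the critical-point analysis family by family: introduce $x_i=r_i^2$ with $\sum_i x_i=1$, write $\cEb(\Si_{\vec r})$ as a function of the $x_i$, and solve the Lagrange equations that all $\partial_{x_i}$ of this function be equal. The solutions with all $c_i=0$, equivalently $H=0$, are precisely \eqref{minimalones}, recovering the minimal examples and their energies; the remaining solutions are \eqref{nonminimal}, and substituting the stated radii into the displayed formula yields the listed values of $\cEb$. For the $S^2\times S^2$ family the reduced problem has a single degree of freedom with a reflection symmetry $r_1\leftrightarrow r_2$, and one checks directly that its only interior critical point is the minimal $r_1=r_2=1/\sqrt2$, which is why that family produces no entry in \eqref{nonminimal}. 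The one real obstacle is this last step: conceptually it is routine, but the Lagrange systems for the codimension-two and codimension-three families are several polynomial equations in two or three variables whose full solution set must be determined and matched against the stated list — a computation best handed to a computer algebra system, which also serves as an independent check of the displayed energy formula.
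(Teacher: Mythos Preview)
Your proposal is correct and follows essentially the same route as the paper: compute $\cE$ on the radial families \eqref{families} as a function of the radii, invoke the Principle of Symmetric Criticality to reduce criticality for $\cE$ among all immersions to criticality of this radial function, and then locate the critical radii. The paper carries this out by explicit coordinate parametrizations and orthonormal frames for each of the four families separately (using {\it Mathematica} for the codimension-two and codimension-three cases), whereas your uniform description $L(X,Y)=\langle X,Y\rangle\,\xi_i$ with $\xi_i=p-r_i^{-2}p_i$ packages all four cases at once and makes $\na H=0$ transparent; the resulting energy formulas agree. Your justification of symmetric criticality via the $G$-invariance of $\cH$ together with Theorem~\ref{variation} is a clean rephrasing of the Palais/Hsiang argument the paper cites.

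One small overstatement: the ``real obstacle'' you flag at the end is not actually present. The proposition only asserts that the listed configurations \emph{are} critical, not that they exhaust the critical set (the paper attributes the full classification to Zhang). So you need only substitute the stated radii into your Lagrange system and check that the equations hold --- a direct verification, not a polynomial-system solve. The paper itself only exhibits these as critical points of its explicit one-, two-, and three-variable energy formulas, without claiming completeness for the multivariable cases.
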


\noindent
Zhang (\cite{Z}) asserts that \eqref{minimalones} together with  
\eqref{nonminimal} constitute all $\cE$-critical embeddings in the families 
\eqref{families}.  

Consider first $S^2(r_1) \times S^2(r_2)\subset S^5$.  Parametrize
$S^2(r_1)\subset \R^3$ using spherical coordinates:
$$
y_1=r_1(\sin\phi_{1}\cos\theta_{1},\sin\phi_{1}\sin\theta_{1},\cos\phi_{1})
$$ 
and likewise $S^2(r_2)\subset \R^3$:
$$
y_2=r_2(\sin\phi_2\cos\theta_2,\sin\phi_2\sin\theta_2,\cos\phi_2)
$$
where $0\leq \theta_1,\theta_2< 2\pi$, $0\leq \phi_1,\phi_2 \leq \pi$.  Then
$x=(y_1,y_2)$ is our embedding $S^2(r_1)\times S^2(r_2)\subset
S^5(1)\subset \R^6$.   

The following is an orthonormal basis for the tangent space to $S^2(r_1)\times S^2(r_2)$:
\begin{align*}
e_{1} =& \frac{x_{\phi_{1}}}{\vert x_{\phi_{1}}\vert} = \frac{1}{r_{1}}x_{\phi_{1}}\\
=& (\cos\phi_{1}\cos\theta_{1},\cos\phi_{1}\sin\theta_{1},-\sin\phi_{1},0,0,0)\\
e_{2} =& \frac{x_{\theta_{1}}}{\vert x_{\theta_{1}}\vert} = \frac{1}{r_{1}\sin\phi_{1}}x_{\theta_{1}}\\
=& (-\sin\theta_{1},\cos\theta_{1},0,0,0,0)\\
e_{3} =& \frac{x_{\phi_{2}}}{\vert x_{\phi_{2}}\vert} = \frac{1}{r_{2}}x_{\phi_{2}}\\
=& (0,0,0,\cos\phi_{2}\cos\theta_{2},\cos\phi_{2}\sin\theta_{2},-\sin\phi_{2})\\
e_{4} =& \frac{x_{\theta_{2}}}{\vert x_{\theta_{2}}\vert} = \frac{1}{r_{2}\sin\phi_{2}}x_{\theta_{2}}\\
=& (0,0,0,-\sin\theta_{2},\cos\theta_{2},0).
\end{align*}
Set 
\begin{equation}\label{normal}
\nu = \big(-\frac{r_{2}}{r_{1}}y_1, \frac{r_{1}}{r_{2}}y_2\big).
\end{equation}
It is easily 
checked that $e_{1},e_{2},e_{3},e_{4},\nu$ is an orthonormal basis for the tangent
space to $S^5$ along $S^{2}(r_{1})\times S^{2}(r_2)$. 
In the subsequent discussion in which a unit normal has been chosen for a
particular hypersurface embedding, we identify $L$ and $H$ with their
scalar counterparts determined by the chosen normal.  
The second fundamental form in this basis is thus given by    
\begin{equation}\label{2ff}
L_{\al\be}=\langle \na_{e_\al}e_\be,\nu\rangle. 
\end{equation}
Now
$$
\na_{e_\al}e_\be=\nabla_{\frac{\partial_{\al} x}{\vert \partial_{\al}x
    \vert}}e_{\be} = \frac{1}{\vert\partial_{\al} x\vert}\partial_{\al}
e_{\be}, 
$$
where $\partial_{1} = \partial_{\phi_{1}}, \partial_{2} =
\partial_{\theta_{1}}, \partial_{3} = \partial_{\phi_{2}}, \partial_{4} =
\partial_{\theta_{2}}$. From the expressions given above for $e_{\al}$, it
is easy to see that $\nabla_{e_{\al}}e_{\be} = 0$ except for the following
cases:  
\begin{align*}
\nabla_{e_{1}}e_{1} =& \frac{1}{r_{1}}(-\sin\phi_{1}\cos\theta_{1},
-\sin\phi_{1}\sin\theta_{1},-\cos\phi_{1},0,0,0)\\ 
\nabla_{e_{2}}e_{1} =&
\frac{1}{r_{1}\sin\phi_{1}}(-\cos\phi_{1}\sin\theta_{1},
\cos\phi_{1}\cos\theta_{1},0,0,0,0)\\ 
\nabla_{e_{2}}e_{2} =&
\frac{1}{r_{1}\sin\phi_{1}}(-\cos\theta_{1},-\sin\theta_{1},0,0,0,0)\\ 
\nabla_{e_{3}}e_{3} =&
\frac{1}{r_{2}}(0,0,0,-\sin\phi_{2}\cos\theta_{2},
-\sin\phi_{2}\sin\theta_{2},-\cos\phi_{2})\\ 
\nabla_{e_{4}}e_{3} =&
\frac{1}{r_{2}\sin\phi_{2}}(0,0,0,-\cos\phi_{2}\sin\theta_{2},
\cos\phi_{2}\cos\theta_{2},0)\\ 
\nabla_{e_{4}}e_{4} =& \frac{1}{r_{2}\sin\phi_{2}}(0,0,0,-\cos\theta_{2},-\sin\theta_{2},0).
\end{align*}
Therefore, in this basis, 
\begin{equation}\label{LS2}
L_{\al\be}=\begin{pmatrix} 
\frac{r_{2}}{r_{1}} & 0 & 0 & 0\\
0 & \frac{r_{2}}{r_{1}} & 0 & 0\\
0 & 0 & -\frac{r_{1}}{r_{2}} & 0\\
0 & 0 & 0 & -\frac{r_{1}}{r_{2}}
\end{pmatrix}.
\end{equation}
Thus 
$$
H=2\Big(\frac{r_{2}}{r_{1}}-\frac{r_{1}}{r_{2}}\Big),\qquad 
|L|^2=2\Big(\frac{r_1^2}{r_2^2}+\frac{r_2^2}{r_1^2}\Big).
$$
Substituting into \eqref{Esphere} and simplifying using $r_1^2+r_2^2=1$
gives 
\begin{equation}\label{tform}
\begin{split}
\cEb&=-16\pi^2\, \frac{r_1^4+r_2^4-14r_1^2r_2^2}{r_1^2r_2^2}\\
&=-16\pi^2\Big(t^2+t^{-2}-14\Big)
\end{split}
\end{equation}
with $\frac{r_1}{r_2}=t$.  The function $t^2+t^{-2}$ 
of $t\in (0,\infty)$ has a unique critical point at $t=1$, a global
minimum.  Thus $\cEb$ is 
unbounded below on this family, with a maximum of $192\pi^2$ achieved at
the minimal embedding.  We remark that since the formula for $\cEb$ above
is homogeneous of degree $0$ in $(r_1,r_2)$, it is valid by conformal
invariance for the embedding 
$S^2(r_1)\times S^2(r_2)\subset S^5(\sqrt{r_1^2+r_2^2})$ for all $r_1$,
$r_2>0$.  

The calculations for the other families in \eqref{families} are similar.
We briefly outline the computations in each case.   

For $S^1\times S^3$, parametrize $S^1(r_1)\subset \R^2$ as  
$$
y_1=r_{1}(\cos\theta_1,\sin\theta_1)
$$ 
and $S^3(r_2)\subset \R^4$ as 
$$
y_2=r_{2}(\sin\phi_{1}\sin\phi_{2}\sin\theta_{2},\sin\phi_{1}\sin\phi_{2}\cos\theta_{2},
\sin\phi_{1}\cos\phi_{2},\cos\phi_{1}).
$$
Then $x=(y_1,y_2)$ is our embedding $S^1(r_1)\times S^3(r_2)\subset
S^5(1)\subset \R^6$.  The vectors 
$$
e_{1} = \frac{x_{\theta_1}}{\vert x_{\theta_1}\vert},\quad
e_{2} = \frac{x_{\phi_{1}}}{\vert x_{\phi_{1}}\vert},\quad
e_{3} = \frac{x_{\phi_{2}}}{\vert x_{\phi_{2}}\vert},\quad
e_{4} = \frac{x_{\theta_{2}}}{\vert x_{\theta_{2}}\vert},\quad 
\nu = \big(-\frac{r_{2}}{r_{1}}y_1, \frac{r_{1}}{r_{2}}y_2\big) 
$$
form an orthonormal basis for the tangent
space to $S^5$ along $S^{1}(r_{1})\times S^{3}(r_2)$. 
The second fundamental form \eqref{2ff} in this basis now takes
the form
\begin{equation*}
L_{\al\be}=\begin{pmatrix} 
\frac{r_{2}}{r_{1}} & 0 & 0 & 0\\
0 & -\frac{r_{1}}{r_{2}} & 0 & 0\\
0 & 0 & -\frac{r_{1}}{r_{2}} & 0\\
0 & 0 & 0 & -\frac{r_{1}}{r_{2}}
\end{pmatrix}.
\end{equation*}

\noindent
So 
$$
H=\frac{r_{2}}{r_{1}}-3\frac{r_{1}}{r_{2}},\qquad 
|L|^2=\frac{r_2^2}{r_1^2}+3\frac{r_1^2}{r_2^2}.
$$
This time substituting into \eqref{Esphere} and simplifying using
$r_1^2+r_2^2=1$ gives  
\[
\begin{split}
\cEb&=\frac{9\pi^3}{4}\,
\frac{15r_1^4r_2^2+14r_1^2r_2^4-r_2^6}{r_1^3r_2^3} \\
&=\frac{9\pi^3}{4}\,\frac{15t^4+14t^2-1}{t^3}
\end{split}
\]
with $t=\frac{r_1}{r_2}$.   
This function of $t$ goes to $+\infty$ as $t\rightarrow \infty$ and goes to
$-\infty$ as $t\rightarrow 0$.  So it is unbounded above and below.  It has
two critical points, a local maximum at $t=\frac{1}{\sqrt{3}}$,
corresponding to the minimal embedding, and a local minimum at
$t=\sqrt{\frac{3}{5}}$.  The Principle of 
Symmetric Criticality (\cite{P}), or alternately Hsiang's argument
\cite{H} concerning critical orbits of compact groups of isometries, which
applies equally well to $\cE$ as to the area functional, implies   
that $S^1\big(\sqrt{\frac38}\big)\times S^3\big(\sqrt{\frac58}\big)\subset
S^5$ is critical for $\cE$.  This is the first example in
\eqref{nonminimal}.

For $S^1\times S^1\times S^2$, parametrize $S^1(r_1)\subset \R^2$ and 
$S^1(r_2)\subset \R^2$ by
$$
y_1=r_{1}(\cos\theta_1,\sin\theta_1),\qquad
y_2=r_{2}(\cos\theta_2,\sin\theta_2) 
$$ 
and $S^2(r_3)\subset \R^3$ by
$$
y_3=r_3(\sin\phi\cos\theta_{3},\sin\phi\sin\theta_{3},\cos\phi).
$$ 
Then $x=(y_1,y_2,y_3)$ is our embedding $S^1(r_1)\times S^1(r_2)\times 
S^2(r_3)\subset S^6(1)\subset \R^7$.  
The vectors 
$$
e_{1} = \frac{x_{\theta_1}}{\vert x_{\theta_1}\vert},\quad
e_{2} = \frac{x_{\theta_2}}{\vert x_{\theta_2}\vert},\quad
e_{3} = \frac{x_{\theta_3}}{\vert x_{\theta_3}\vert},\quad
e_{4} = \frac{x_{\phi}}{\vert x_{\phi}\vert}
$$
form an orthonormal basis for the tangent space to $S^1(r_1)\times 
S^1(r_2)\times  S^2(r_3)$.  The vectors
\begin{align*}
\nu_{1} =& \Big(-\frac{r_{2}}{r_{1}\sqrt{r_{1}^{2}+r_{2}^{2}}}y_{1},
\frac{r_{1}}{r_{2}\sqrt{r_{1}^{2}+r_{2}^{2}}}y_{2},0\Big)\\ 
\nu_{2} =& \Big(\frac{r_{3}}{\sqrt{r_{1}^{2}+r_{2}^{2}}}y_{1},
\frac{r_{3}}{\sqrt{r_{1}^{2}+r_{2}^{2}}}y_{2},-\frac{\sqrt{r_{1}^{2}+r_{2}^{2}}}{r_{3}}y_3\Big)\\ 
\end{align*}
form an orthonormal basis for the normal space.  The second fundamental
forms $L_{\al\be}^{\al'}=\langle \na_{e_\al}e_\be,\nu_{\al'}\rangle$, $\al'=1,2$, are given
by 
\begin{equation*}
L^1_{\al\be}=\begin{pmatrix}
\frac{r_{2}}{r_{1}\sqrt{r_1^2+r_2^2}} & 0 & 0 & 0\\
0 & -\frac{r_{1}}{r_{2}\sqrt{r_1^2+r_2^2}} & 0 & 0\\
0 & 0 & 0 & 0\\
0 & 0 & 0 & 0
\end{pmatrix},\,\,\,\,
L^2_{\al\be}=\begin{pmatrix}
-\frac{r_{3}}{\sqrt{r_1^2+r_2^2}} & 0 & 0 & 0\\
0 & -\frac{r_{3}}{\sqrt{r_1^2+r_2^2}} & 0 & 0\\
0 & 0 & \frac{\sqrt{r_1^2+r_2^2}}{r_{3}} & 0\\
0 & 0 & 0 & \frac{\sqrt{r_1^2+r_2^2}}{r_{3}}
\end{pmatrix}
\end{equation*}
and the mean curvatures by
$$
H^1=\frac{r_2^2-r_1^2}{r_1r_2\sqrt{r_1^2+r_2^2}},\qquad
H^2=\frac{2(r_1^2+r_2^2-r_3^2)}{r_3\sqrt{r_1^2+r_2^2}}.
$$
We used {\it Mathematica} and $r_1^2+r_2^2+r_3^2=1$ to calculate that    
\[
\begin{split}
\cEb&=-\frac{\pi^3}{r_1^3r_2^3r_3^3}
\,\Big[16r_1^4r_2^4r_3 
-56(r_1^4r_2^2+r_2^4r_1^2)r_3^3+\big(9(r_1^4+r_2^4)-14r_1^2r_2^2\big)r_3^5\Big]\\
&=-\frac{\pi^3}{t_1^3t_2^3}\,\Big[16t_1^4t_2^4 
-56(t_1^4t_2^2+t_2^4t_1^2)+9(t_1^4+t_2^4)-14t_1^2t_2^2\Big]
\end{split}
\]
with $t_1=\frac{r_1}{r_3}$, $t_2=\frac{r_2}{r_3}$.  
It is easily seen that this function of $(t_1,t_2)\in (0,\infty)\times 
(0,\infty)$ is unbounded above and below (for instance, this is already the
case when restricted to $t_2=1$) and has critical points at 
$(t_1,t_2)=\big(\frac{1}{\sqrt{2}},\frac{1}{\sqrt{2}}\big)$ and 
$(t_1,t_2)=\big(\sqrt{\frac{5}{10}},\sqrt{\frac{9}{10}}\big)$.  (By  
symmetry, another critical point is obtained from the latter by
interchanging $t_1$ and $t_2$.)  The first critical point corresponds to
the minimal embedding $r_1=r_2=\frac12$, $r_3=\frac{1}{\sqrt{2}}$.  The second
corresponds to the non-minimal $\cE$-critical embedding
$S^1\big(\sqrt{\frac{5}{24}}\big)\times
  S^1\big(\sqrt{\frac{9}{24}}\big)\times 
S^2\big(\sqrt{\frac{10}{24}}\big)\subset S^6$.   This
is the second example in \eqref{nonminimal}.  

For $(S^1)^4$, parametrize $S^1(r_\al)\subset \R^2$, $1\leq \al\leq 4$,  
by
$$
y_\al=r_{\al}(\cos\theta_\al,\sin\theta_\al).
$$ 
Then $x=(y_1,y_2,y_3,y_4)$ is our embedding $S^1(r_1)\times S^1(r_2)\times   
S^1(r_3)\times S^1(r_4)\subset S^7(1)\subset \R^8$.  
The vectors 
$$
e_{\al} = \frac{x_{\theta_\al}}{r_\al},\qquad 1\leq \al\leq 4,
$$
form an orthonormal basis for the tangent space to $S^1(r_1)\times 
S^1(r_2)\times  S^1(r_3)\times S^1(r_4)$.  The vectors
\begin{align*}
\nu_{1} =& \Big(-\frac{r_{2}}{r_{1}}y_{1},\frac{r_{1}}{r_{2}}y_{2},
-\frac{r_{4}}{r_{3}}y_{3},\frac{r_{3}}{r_{4}}y_{4}\Big)\\ 
\nu_{2} =& \Big(-\frac{r_{3}}{r_{1}}y_{1},\frac{r_{4}}{r_{2}}y_{2},
\frac{r_{1}}{r_{3}}y_{3},-\frac{r_{2}}{r_{4}}y_{4}\Big)\\
\nu_{3} =& \Big(-\frac{r_{4}}{r_{1}}y_{1},-\frac{r_{3}}{r_{2}}y_{2},
\frac{r_{2}}{r_{3}}y_{3},\frac{r_{1}}{r_{4}}y_{4}\Big) 
\end{align*}
form an orthonormal basis for the normal space.  The second fundamental
forms $L_{\al\be}^{\al'}=\langle \na_{e_\al}e_\be,\nu_{\al'}\rangle$ are
given by  
\begin{equation*}
L^1_{\al\be}=\begin{pmatrix}
\frac{r_{2}}{r_{1}} & 0 & 0 & 0\\
0 & -\frac{r_{1}}{r_{2}} & 0 & 0\\
0 & 0 & \frac{r_{4}}{r_{3}} & 0\\
0 & 0 & 0 & -\frac{r_{3}}{r_{4}}
\end{pmatrix},\,
L^2_{\al\be}=\begin{pmatrix}
\frac{r_{3}}{r_{1}} & 0 & 0 & 0\\
0 & -\frac{r_{4}}{r_{2}} & 0 & 0\\
0 & 0 & -\frac{r_{1}}{r_{3}} & 0\\
0 & 0 & 0 & \frac{r_{2}}{r_{4}}
\end{pmatrix},\,
L^3_{\al\be}=\begin{pmatrix}
\frac{r_{4}}{r_{1}} & 0 & 0 & 0\\
0 & \frac{r_{3}}{r_{2}} & 0 & 0\\
0 & 0 & -\frac{r_{2}}{r_{3}} & 0\\
0 & 0 & 0 & -\frac{r_{1}}{r_{4}}
\end{pmatrix}
\end{equation*}
and the mean curvatures by
$$
H^1=\frac{r_2^2-r_1^2}{r_1r_2}+\frac{r_4^2-r_3^2}{r_3r_4},\qquad
H^2=\frac{r_3^2-r_1^2}{r_1r_3}+\frac{r_2^2-r_4^2}{r_2r_4},\qquad
H^3=\frac{r_4^2-r_1^2}{r_1r_4}+\frac{r_3^2-r_2^2}{r_2r_3}.
$$
We used {\it Mathematica} and $\sum_{\al=1}^4r_\al^2=1$ to calculate that   
\[
\begin{split}
\cEb&=-\frac{\pi^4}{r_1^3r_2^3r_3^3r_4^3}\,\Big[36\Sym(r_1^4r_2^4r_3^4) 
-84\Sym(r_1^4r_2^4r_3^2r_4^2)\Big]\\
&=-\frac{\pi^4}{t_1^3t_2^3t_3^3}
\Big[9t_1^4t_2^4t_3^4+27\Sym(t_1^4t_2^4)
-42\Sym(t_1^4t_2^4t_3^2)-42\Sym(t_1^4t_2^2t_3^2)\Big], 
\end{split}
\]
where $t_k=\frac{r_k}{r_4}$, $1\leq k\leq 3$.  Here we write
\begin{align*}
\Sym(r_1^ar_2^br_3^cr_4^d)&=\frac{1}{4!}\sum_{\sigma\in
  S_4}r_{\si(1)}^ar_{\si(2)}^br_{\si(3)}^cr_{\si(4)}^d \\
\Sym(t_1^at_2^bt_3^c)&=\frac{1}{3!}\sum_{\sigma\in
  S_3}t_{\si(1)}^at_{\si(2)}^bt_{\si(3)}^c
\end{align*}
for the symmetrization of monomials.  
It is easily seen that $\cEb$, viewed as a function of $(t_1,t_2,t_3)\in
(0,\infty)^3$, is 
unbounded above and below (for instance, this is already the case when
restricted to $t_2=t_3=1$) and has critical points at 
$(t_1,t_2,t_3)=(1,1,1)$ and 
$(t_1,t_2,t_3)=\big(\sqrt{\frac59},\sqrt{\frac59},\sqrt{\frac59}\big)$.
The first critical point corresponds to the minimal embedding 
$r_1=r_2=r_3=r_4=\frac12$.  The second corresponds to the non-minimal
$\cE$-critical embedding 
$S^1\big(\sqrt{\tfrac{5}{24}}\big)\times
  S^1\big(\sqrt{\tfrac{5}{24}}\big)\times 
S^1\big(\sqrt{\tfrac{5}{24}}\big)\times 
S^1\big(\sqrt{\tfrac{9}{24}}\big)\subset S^7$.  This is the third
example in \eqref{nonminimal}. 

We close our discussion of the energy of the embeddings \eqref{families}
with an observation that we find truly remarkable.  Namely, the energy and  
critical points for the first three of the families in \eqref{families}, as
well as for equatorial $S^4\subset S^5$, can all be derived from the energy 
and critical points for the 4-torus family $S^1(r_1)\times S^1(r_2)\times 
S^1(r_3)\times S^1(r_4)\subset S^7$ by specializing the radii.  Precisely,
the following relations hold:  
\begin{align*}
\mathcal{E}(S^{1}(r_{1})\times S^{1}(r_{2})\times S^{1}(r_{3})\times
S^{1}(r_{3}))  
=&
\frac{\pi}{2}\,\mathcal{E}(S^{1}(r_{1})
\times S^{1}(r_{2})\times S^{2}(\sqrt{2}r_{3}))\\ 
\mathcal{E}(S^{1}(r_{1})\times S^{1}(r_{2})
\times S^{1}(r_{2})\times S^{1}(r_{2}))
=&
\frac{4\pi}{3\sqrt{3}}\,\mathcal{E}(S^{1}(r_{1})
\times S^{3}(\sqrt{3}r_{2})) \\   
\mathcal{E}(S^{1}(r_{1})\times S^{1}(r_{1})
\times S^{1}(r_{2})\times S^{1}(r_{2})) 
=&
\frac{\pi^{2}}{4}\,\mathcal{E}(S^{2}(\sqrt{2}r_{1})
\times S^{2}(\sqrt{2}r_{2}))\\  
\mathcal{E}(S^{1}(r_{1})\times S^{1}(r_{1})
\times S^{1}(r_{1})\times S^{1}(r_{1})) 
=&
\frac{3\pi^{2}}{8}\,\mathcal{E}(S^{4}(2r_{1})).  
\end{align*}
Moreover, the constants appearing in these relations are the 
corresponding ratios of the areas of the factor spheres.  Namely,
\begin{align*}
\frac{\pi}{2}&=
\frac{A(S^1(r_1))A(S^1(r_2))A(S^1(r_3))^2}{A(S^1(r_1))A(S^1(r_2))A(S^2(\sqrt{2}r_3))}\\
\frac{4\pi}{3\sqrt{3}}&=
\frac{A(S^1(r_1))A(S^1(r_2))^3}{A(S^1(r_1))A(S^3(\sqrt{3}r_2))}\\
\frac{\pi^2}{4}&=
\frac{A(S^{1}(r_{1}))^2A(S^{1}(r_{2}))^2}
{A(S^{2}(\sqrt{2}r_{1}))A(S^{2}(\sqrt{2}r_{2}))}\\ 
\frac{3\pi^2}{8}&=
\frac{A(S^{1}(r_{1}))^4}{A(S^4(2r_1))}.
\end{align*}
It is clear that these relations enable the deduction of the critical
points of any of the families from those for $S^{1}(r_{1})\times
S^{1}(r_{2})\times S^{1}(r_{3})\times S^{1}(r_{4})$.  Also, one may deduce
direct relations between the energy of any one of the families and any
other family with fewer factors.  That is, the energy of the family 
$S^{1}(r_{1})\times S^{1}(r_{2})\times S^{2}(r_{3})$ determines the energy
of the $S^{1}\times S^{3}$, $S^{2}\times S^{2}$, and $S^{4}$ families, and
the energy of either family $S^{1}\times S^{3}$ or $S^{2}\times S^{2}$
determines that of the $S^{4}$ family, by similar relations.  

We also note that the same sort of relation holds for the usual
2-dimensional Willmore energy, with the same rule for the constant.
Namely, for the 2-dimensional energy $\cE$ given in
Corollary~\ref{Eformcor}, we have  
$$
\mathcal{E}(S^{1}(r_{1})\times S^{1}(r_{1}))
=
\frac{\pi}{2}\,\mathcal{E}(S^{2}(\sqrt{2}r_{1}))
$$
with $\frac{\pi}{2} = \frac{A(S^1(r_1))^2}{A(S^2(\sqrt{2}r_1))}$.  
(No such relation seems to hold for the Willmore energy in the form 
$\int_{\Si}|\mathring{L}|^2$ owing to the extra term involving
the Euler characteristic.)  
We have no understanding of why these relations should be true other than
verifying them from the formulas.  It would be interesting to provide a   
geometric explanation.

\subsection{Anchor Rings}\label{anchor}
We first indicate how the embeddings $S^j\times S^k\subset S^{j+k+1}$
considered in \S\ref{prod} can be composed with stereographic projection to
obtain embeddings $S^j\times S^k\subset \R^{j+k+1}$ which are higher
dimensional versions of anchor rings.
We then construct a family of embeddings of $S^2\times S^2$ in $\R^5$ for
which $\cE\rightarrow \infty$ by dilating such an anchor ring in only some
of the variables.  This combined with the results in \S\ref{prod} enables
us to prove Proposition~\ref{no}.  

Fix $R>0$ and $0<r<R$.  Define the anchor ring embedding 
$S^j\times S^k\rightarrow \R^{j+k+1}$ by
\begin{equation}\label{anchorparam}
S^j\times S^k\ni (y,z)\rightarrow \big((R+rw)y,rv\big),
\end{equation}
where we have written $z=(v,w)\in S^k\subset \R^{k+1}$ with $v\in \R^k$,
$w\in \R$.  
The image $T^{j,k}_{R,r}$ is the tube of radius $r$ about $S^j(R)\subset 
\R^{j+1}\times \{0\}$.  Note that $j$ and $k$ are not treated
symmetrically:  for $j\neq k$ the two embeddings $T^{j,k}_{R,r}$ and
$T^{k,j}_{R,r}$ of $S^j\times S^k$ into $\R^{j+k+1}$ are distinct (even 
disregarding the interchange of the factors).    

Next let $r_1^2+r_2^2=1$ and recall the product embedding $S^j(r_1)\times 
S^k(r_2)\subset S^{j+k+1}\subset \R^{j+1}\times \R^{k+1}$.  Consider 
its image under stereographic projection, where the 
base point for the stereographic projection is a point of $S^{j+k+1}$ lying
in $\{0\}\times \R^{k+1}$, which we can take to be $(0,\ldots,0,1)$ by a
rotation of $\R^{k+1}$.  Thus the stereographic projection
$\pi:S^{j+k+1}\rightarrow \R^{j+k+1}$ is  
$$
\pi(x',x_{j+k+2})=\frac{x'}{1-x_{j+k+2}},\qquad\qquad
x'=(x_1,\ldots,x_{j+k+1}). 
$$
It is an easy verification which we leave to the reader that 
$\pi\big(S^j(r_1)\times S^k(r_2)\big)=T^{j,k}_{R,r}$ with $R=1/r_1$ and 
$r=r_2/r_1$.  Since $\pi$ is conformal, 
$\cE(S^j(r_1)\times S^k(r_2))=\cE(T^{j,k}_{R,r})$.  Thus the $\cE$-critical 
embeddings of $S^2\times S^2$ and $S^1\times S^3$ in $S^5$ discussed above
give $\cE$-critical ``anchor ring'' tubes in $\R^5$.   
For $S^2\times S^2$, we obtain the single critical anchor ring tube 
$T^{2,2}_{\sqrt{2},1}$, which is the stereographic image of the 
minimal $S^2\big(\tfrac{1}{\sqrt{2}}\big)\times 
S^2\big(\tfrac{1}{\sqrt{2}}\big)\subset S^5$.
For $S^1\times S^3$, 
we obtain the two critical anchor ring tubes $T^{1,3}_{2,\sqrt{3}}$ and 
$T^{3,1}_{2/\sqrt{3},1/\sqrt{3}}$ corresponding to the minimal embedding,
and $T^{1,3}_{\sqrt{8/3},\sqrt{5/3}}$ and 
$T^{3,1}_{\sqrt{8/5},\sqrt{3/5}}$ corresponding to the non-minimal
embedding.  Note that $T^{j,k}_{R,r}$ is a dilate of $T^{j,k}_{cR,cr}$ for 
any $c>0$, so we conclude from the last three embeddings that 
$T^{3,1}_{2,1}$, $T^{1,3}_{\sqrt{8},\sqrt{5}}$, and 
$T^{3,1}_{\sqrt{8},\sqrt{3}}$ are $\cE$-critical.  

We have written down generalizations of the above embeddings to
``generalized anchor 
rings'' which arise as the images under stereographic projection of 
$S^{j_1}(r_1)\times \cdots S^{j_l}(r_l)\subset S^{j_1+\ldots+j_l
  +l-1}\subset \R^{j_1+\ldots +j_l+l}$ for $\sum_{i=1}^lr_i^2=1$.
Specializing to the $\cE$-critical products $S^1(r_1)\times S^1(r_2)\times
S^2(r_3)\subset S^6$ and $S^1(r_1)\times S^1(r_2)\times S^1(r_3)\times
S^1(r_4)\subset S^7$
discussed above gives $\cE$-critical generalized anchor ring embeddings in
$\R^6$ and $\R^7$.  

We next exhibit a family of embeddings of $S^2\times S^2$ in $\R^5$ with
energy unbounded above.  For $a>0$, let $\delta_a:\R^5\rightarrow 
\R^5$ be given by $\delta_a(y,v)=(y,av)$ for $y\in \R^3$, $v\in \R^2$.   

\begin{proposition}\label{a}
$$
\cE\big(\delta_a(T^{2,2}_{\sqrt{2},1})\big)=\frac{2\pi^2}{35}a^4 +o(a^4)  
$$
as $a\rightarrow \infty$.
\end{proposition}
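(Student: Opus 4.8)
The plan is to compute $\cEb=128\,\cE$ directly for the hypersurface $\Si_a:=\delta_a\big(T^{2,2}_{\sqrt2,1}\big)\subset\R^5$ and read off the leading term as $a\to\infty$. Since the background is flat, \eqref{Eeuclidean} applies; and because $\Si_a$ is a hypersurface, after fixing a unit normal it reduces to $\cEb=\int_{\Si_a}\big(|dH|^2-|L|^2H^2+\tfrac7{16}H^4\big)\,da$, where $L$ and $H$ are now the scalar second fundamental form and mean curvature. Composing \eqref{anchorparam} (with $j=k=2$, $R=\sqrt2$, $r=1$) with $\delta_a$ gives the parametrization $X_a(\psi,\chi,\phi,\theta)=\big((\sqrt2+\cos\phi)\,y,\,a\sin\phi\cos\theta,\,a\sin\phi\sin\theta\big)$, with $y=y(\psi,\chi)\in S^2\subset\R^3$ in spherical coordinates and $(\phi,\theta)$ spherical coordinates on the cross-sectional $S^2$. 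This embedding is invariant under the $SO(3)\times SO(2)$ acting on the $y$-sphere and on the $\theta$-circle, so $H$, $|L|^2$ and $|dH|^2$ depend only on $\phi$ (and $a$); integrating out $\psi,\chi,\theta$ produces a factor $8\pi^2$, reducing the problem to a single integral over $\phi\in[0,\pi]$.

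I would then carry out the elementary but bookkeeping-heavy computation of the induced metric, unit normal and second fundamental form: in these coordinates the metric is diagonal, the unit normal is the normalization of $\big(\cos\phi\,y,\,a^{-1}\sin\phi\cos\theta,\,a^{-1}\sin\phi\sin\theta\big)$, and the second fundamental form is diagonal as well. Setting $\rho=\rho(\phi,a):=\sqrt{\cos^2\phi+a^{-2}\sin^2\phi}$, the principal curvatures come out to be $\kappa_\psi=\kappa_\chi=-\cos\phi/\big(\rho(\sqrt2+\cos\phi)\big)$, $\kappa_\phi=-1/(a^2\rho^3)$, $\kappa_\theta=-1/(a^2\rho)$, and the volume density integrates over $\psi,\chi,\theta$ to $8\pi^2 a^2(\sqrt2+\cos\phi)^2\rho\sin\phi\,d\phi$. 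Hence $\cEb=8\pi^2a^2\int_0^\pi(\sqrt2+\cos\phi)^2\rho\sin\phi\,\big(|dH|^2-|L|^2H^2+\tfrac7{16}H^4\big)\,d\phi$ with $H=2\kappa_\psi+\kappa_\phi+\kappa_\theta$, $|L|^2=2\kappa_\psi^2+\kappa_\phi^2+\kappa_\theta^2$ and $|dH|^2=(\partial_\phi H)^2/(a^2\rho^2)$.

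The key structural observation is that $\rho$ degenerates at $\phi=\pi/2$, producing a boundary layer of width $\sim a^{-1}$ there in which $\kappa_\phi\sim a$; away from this layer the whole integrand is only $O(a^2)$, so the $a^4$ growth comes entirely from the layer. I would substitute $\phi=\pi/2+t/a$, so that $\rho\to\sigma/a$ with $\sigma:=\sqrt{1+t^2}$ and $\kappa_\phi\to-a/\sigma^3$ (while $\kappa_\psi,\kappa_\theta$ remain bounded), and check that $a^{-4}|dH|^2\to 9t^2/\sigma^{12}$, $a^{-4}|L|^2H^2\to 1/\sigma^{12}$, $a^{-4}H^4\to 1/\sigma^{12}$, while the reduced density tends to $2\sigma\,dt$. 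Consequently $a^{-4}$ times the integrand converges pointwise (a.e.\ in $t$) to $2\sigma\big(9t^2-1+\tfrac7{16}\big)/\sigma^{12}=18\,(t^2-\tfrac1{16})/(1+t^2)^{11/2}$. The main technical point — and the step I expect to be the real obstacle — is to justify interchanging limit and integral: one must dominate $a^{-4}$ times the rescaled integrand by a fixed integrable function of $t$, uniformly in $a$, which requires tracking all terms carefully across scales, from $t\sim 1$ out to the bulk regime $|t|\sim a$, where the $O(a^2)$ estimate shows the rescaled contribution is $O(a^{-2})$ and hence negligible.

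Granting this, dominated convergence gives $a^{-4}\cEb\to 8\pi^2\cdot 18\int_{-\infty}^{\infty}(t^2-\tfrac1{16})(1+t^2)^{-11/2}\,dt$. Using $\int_{\R}(1+t^2)^{-m}\,dt=\sqrt\pi\,\Gamma(m-\tfrac12)/\Gamma(m)$ one computes $\int_{\R}(1+t^2)^{-9/2}dt=\tfrac{32}{35}$ and $\int_{\R}(1+t^2)^{-11/2}dt=\tfrac{256}{315}$, whence $\int_{\R}t^2(1+t^2)^{-11/2}dt=\tfrac{32}{315}$ and $\int_{\R}(t^2-\tfrac1{16})(1+t^2)^{-11/2}dt=\tfrac{16}{315}$. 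Therefore $a^{-4}\cEb\to 8\pi^2\cdot18\cdot\tfrac{16}{315}=\tfrac{256}{35}\pi^2$, i.e.\ $\cEb=\tfrac{256}{35}\pi^2 a^4+o(a^4)$; dividing by $128$ yields $\cE\big(\delta_a(T^{2,2}_{\sqrt2,1})\big)=\tfrac{2\pi^2}{35}a^4+o(a^4)$. As an independent check, after the substitution $u=\cos\phi$ the $\phi$-integral is elementary in closed form in $a$, and its asymptotic expansion — readily verified with a computer algebra system — agrees.
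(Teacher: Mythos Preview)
Your proposal is correct and follows essentially the same route as the paper: reduce by symmetry to a one-variable integral over $\phi$, then extract the $a^4$ term via the boundary-layer substitution $t=a(\phi-\pi/2)$ (equivalently the paper's $t=as$ with $s=\cos\phi$) and dominated convergence, arriving at the same limiting integral and the same value $256\pi^2/35$ for $\lim a^{-4}\cEb$. The one organizational difference is that the paper first rescales conformally to $T_{1,1/\sqrt2}$ and uses \emph{Mathematica} to write the entire integrand explicitly as a polynomial $p(a,s)$ divided by $(1+s/\sqrt2)^2(1+(a^2-1)s^2)^{11/2}$; this reduces the domination step you correctly flag as the obstacle to a clean lemma on the monomial integrals $\cI_{i,j}(a)=\int a^js^i/[(1+s/\sqrt2)^2(1+(a^2-1)s^2)^{11/2}]\,ds$, with the two surviving terms ($9a^8$ and $-144s^2a^{10}$) read off from $p$ by degree-counting, whereas your direct boundary-layer approach must establish the domination on the assembled nonpolynomial integrand by hand.
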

\begin{proof}
Conformal invariance implies
$\cE\big(\delta_a(T_{\sqrt{2},1})\big)=\cE\big(\delta_a(T_{1,1/\sqrt{2}})\big)$. 
(We suppress the ${}^{2,2}$ on $T^{2,2}_{R,r}$ throughout this proof.)   
Consider $\delta_a(T_{1,r})$ for $0<r<1$; we will 
set $r=1/\sqrt{2}$ later.  

Parametrize $\delta_a(T_{1,r})$ by introducing 
spherical coordinates for $y$, $z$ in \eqref{anchorparam}:
$$
x=\big((1+r\cos\phi_2)y, arv\big)
$$
with
\begin{align*}
y&=(\sin\phi_1\cos\theta_1,\sin\phi_1\sin\theta_1,\cos\phi_1)\\
v&=\sin\phi_2(\cos\theta_2,\sin\theta_2),
\end{align*}
where $0\leq \theta_1,\theta_2< 2\pi$, $0\leq \phi_1,\phi_2 \leq \pi$.   
The tangent vectors $e_1=x_{\phi_1}$, $e_2=x_{\theta_1}$, $e_3=x_{\phi_2}$,
$e_4=x_{\theta_2}$ are orthogonal with 
\begin{equation}\label{xderivs}
\begin{split}
|x_{\phi_1}|^2&=(1+r\cos\phi_2)^2 \\
|x_{\theta_1}|^2&=(1+r\cos\phi_2)^2\sin^2\phi_1\\
|x_{\phi_2}|^2&=r^2(a^2\cos^2\phi_2+\sin^2\phi_2) \\ 
|x_{\theta_2}|^2&=a^2r^2\sin^2\phi_2.
\end{split}
\end{equation}
A unit normal is
$$
\nu=\frac{1}{\ell(\phi_2)}\big(a(\cos\phi_2)y,v\big)
$$
where $\ell(\phi_2)=\sqrt{a^2\cos^2\phi_2+\sin^2\phi_2}
=\sqrt{1+(a^2-1)\cos^2\phi_2}$.  
The second fundamental form \eqref{2ff} in this basis is 
\begin{equation*}
L_{\al\be}=-\frac{a}{\ell(\phi_2)} 
\begin{pmatrix} 
\cos\phi_2(1+r\cos\phi_2) & 0 & 0 & 0\\
0 & \cos\phi_2(1+r\cos\phi_2)\sin^2\phi_1 & 0 & 0\\  
0 & 0 & r & 0\\
0 & 0 & 0 & r\sin^2\phi_2
\end{pmatrix}.
\end{equation*}
Recalling \eqref{xderivs}, contraction gives:
\[
\begin{split}
H&=-\frac{a}{\ell(\phi_2)}\left[\frac{2\cos\phi_2}{1+r\cos\phi_2} 
+\frac{1}{r\ell(\phi_2)^2}+\frac{1}{ra^2}\right]\\
|L|^2&=\frac{a^2}{\ell(\phi_2)^2}\left[\frac{2\cos^2\phi_2}{(1+r\cos\phi_2)^2}  
+\frac{1}{r^2\ell(\phi_2)^4}+\frac{1}{r^2a^4}\right].
\end{split}
\]
Since $H$ depends only on $\phi_2$, it follows that 
$$
|\nabla H|^2 = r^{-2}\ell(\phi_2)^{-2}\big(\pa_{\phi_2}H\big)^2.
$$
This information is sufficient to calculate the integrand 
$|\nabla H|^2 -|L|^2H^2 +\frac{7}{16} H^4$ of
$\cEb\big(\delta_a(T_{1,r})\big)$, and clearly it depends only on
$\phi_2$. Now \eqref{xderivs} also gives  
$$
da=ar^2(1+r\cos\phi_2)^2\ell(\phi_2)\sin\phi_2\sin\phi_1d\phi_1d\theta_1d\phi_2d\theta_2.  
$$
It follows that we can write
$$
\Big(|\nabla H|^2 -|L|^2H^2 +\frac{7}{16} H^4\Big)da 
=I(\cos\phi_2)\sin\phi_1\sin\phi_2d\phi_1d\theta_1d\phi_2d\theta_2
$$
where $I(\cos\phi_2)$ is a function only of $\cos\phi_2$ (depending on $a$
and $r$).   
Upon making the substitution $s=\cos\phi_2$, it follows that
\begin{equation}\label{int2}
\cEb\big(\delta_a(T_{1,r})\big)=8\pi^2\int_{-1}^1 I(s)\,ds.
\end{equation}
We used {\it Mathematica} to calculate $I(s)$ for $r=1/\sqrt{2}$.  The
result is 
\begin{equation}\label{I}
I(s)=-\frac{1}{8a^3}\frac{p(a,s)}
{\left(1+\frac{s}{\sqrt{2}}\right)^2\big(1+(a^2-1)s^2\big)^{11/2}},  
\end{equation}
where $p(a,s)$ is the polynomial in $a$, $s$ given by:
\begin{equation}\label{longone}
\begin{split}
p(a,s)&=
4 s^{12} a^{16}\\
&+\Big(-24 s^{12} -24 \sqrt{2} s^{11} -24  s^{10} -24 \sqrt{2} s^9\Big)
a^{14}\\ 
&+\Big(54 s^{12}+92 \sqrt{2} s^{11} -16 s^{10} -128 \sqrt{2} s^9 -218 s^8
-172 \sqrt{2} s^7 -132 s^6\Big) a^{12}\\
&+\Big(-50 s^{12} 
-106 \sqrt{2} s^{11} 
+226 s^{10} +502 \sqrt{2} s^9 +246 s^8 -162 \sqrt{2} s^7 -234 s^6
-286 \sqrt{2} s^5\\
&\qquad
 -380 s^4 -188 \sqrt{2} s^3 -144 s^2\Big)
a^{10}\\ 
&+\Big(\frac{9 s^{12} }{4}-7 \sqrt{2} s^{11}
-354 s^{10} 
-440 \sqrt{2} s^9 +\frac{541 s^8 }{2}+632 \sqrt{2} s^7 
+25 s^6 -350 \sqrt{2} s^5 \\
&\qquad
-\frac{847 s^4 }{4}+95 \sqrt{2} s^3+327 s^2 +118 \sqrt{2} s+9\Big) a^8\\  
&+\Big(27 s^{12} +88 \sqrt{2} s^{11} +208 s^{10} -34 \sqrt{2} s^9
-478 s^8-42 \sqrt{2} s^7 
+816 s^6 +398 \sqrt{2} s^5 \\
&\qquad
 -241 s^4-302 \sqrt{2} s^3 -304 s^2 -108 \sqrt{2} s-28\Big) a^6\\ 
&+\Big(-\frac{25 s^{12} }{2}-38 \sqrt{2} s^{11} -16 s^{10} +146 \sqrt{2}
 s^9 +165 s^8 -322 \sqrt{2} s^7  -570 s^6 +254 \sqrt{2} s^5 \\
&\qquad
+\frac{1523s^4}{2}+64\sqrt{2} s^3 -318 s^2 -104 \sqrt{2} s -10\Big)a^4\\  
&+\Big(-3 s^{12} -14 \sqrt{2} s^{11} -42 s^{10} -4 \sqrt{2} s^9 +100 s^8
+84 \sqrt{2} s^7 -22 s^6 -88 \sqrt{2} s^5 \\ 
&\qquad
 -69 s^4 +10 \sqrt{2} s^3 +32 s^2 +12 \sqrt{2} s +4\Big) a^2\\ 
&+\Big(\frac{9 s^{12}}{4}+9 \sqrt{2}s^{11}
+18 s^{10}-18 \sqrt{2} s^9-\frac{171s^8}{2}-18 \sqrt{2} s^7
+117 s^6+72 \sqrt{2} s^5\\
&\qquad
 -\frac{207 s^4}{4}-63 \sqrt{2} s^3-9 s^2 +18 \sqrt{2} s+9\Big).
\end{split}
\end{equation}
Observe that $p(a,s)$ has degree $16$ in $a$ and
any monomial $s^ia^j$ occurring in $p$ has $j-i\leq 8$.   Moreover, the
only terms with $j-i=8$ are $9a^8-144s^2a^{10}$.  

In order to
derive the asymptotics of $\cEb\big(\delta_a(T_{1,1/\sqrt{2}})\big)$  
as $a\rightarrow \infty$, we have the following lemma.
Set 
$$
\cI_{i,j}(a)=\int_{-1}^{1}\frac{a^{j}s^{i}\,ds}{\left(1+\frac{s}{\sqrt{2}}\right)^2  
    \big(1+(a^2-1)s^2\big)^{11/2}}
$$
\begin{lemma}\label{integralasymptotics} 
$$
\lim_{a\to\infty}\cI_{i,j}(a) = \begin{cases}  
\int_{-\infty}^{\infty}\frac{t^{i}\,dt}{\left(t^2+1\right)^{11/2}} \qquad \qquad
j - i = 1, i < 10\\ 0 \qquad \qquad \qquad \qquad\quad\,\, j - i < 1, j < 11 
\end{cases}
$$
\end{lemma}
\begin{proof}
Make the substitution $t = as$ to rewrite 
\begin{equation*}
\cI_{i,j}(a)=a^{j-i-1}\int_{-a}^{a}{\frac{t^{i}\,dt}
{\left(1+\frac{t}{a\sqrt{2}}\right)^2  
    \big(1+t^2-(t/a)^2\big)^{11/2}}}.  
\end{equation*}
If $j-i=1$ and $i < 10$, dominated convergence shows that
$\lim_{a\rightarrow \infty}\cI_{i,j}(a)=
\int_{-\infty}^{\infty}{\frac{t^{i}\,dt}{\left(t^2+1\right)^{11/2}}}$.  
If $j-i<1$ and $i<10$, the same argument shows that the limit is 
0.   If $j - i < 1$, $j < 11$, and $i\geq 10$, choose $\epsilon$ with 
$0<\epsilon < 11-j$. Then
$$
\Bigg|\int_{-a}^{a}{\frac{a^{j-i-1}t^{i}\,dt}{\left(1+\frac{t}{a\sqrt{2}}\right)^2
      \big(1+t^2-(t/a)^2\big)^{11/2}}}\Bigg|
\leq \int_{-a}^{a}\frac{a^{j-i-1}|t|^{10-\ep}|t|^{i+\ep-10}\,dt}
{\left(1+\frac{t}{a\sqrt{2}}\right)^2
  \big(1+t^2-(t/a)^2\big)^{11/2}}. 
$$
Since $|t| \leq a$ and $i+\ep\geq 10$, we have 
$$
a^{j-i-1}|t|^{10-\ep}|t|^{i+\ep-10}\leq
a^{j-i-1}|t|^{10-\ep}a^{i+\ep-10}=a^{j+\ep-11}|t|^{10-\ep}.
$$
Thus dominated convergence again shows that the limit is 0.   
\end{proof}

According to \eqref{int2}, \eqref{I}, \eqref{longone}, 
$a^{-4}\cEb\big(\delta_a(T_{1,1/\sqrt{2}})\big)$ is a 
linear combination of integrals of the form $\cI_{i,j}(a)$ with $j\leq 9$
and $j-i\leq 1$.  Lemma~\ref{integralasymptotics} shows that the limit
vanishes for all terms with $j-i<1$.  As indicated above, this is all but
two terms.  Thus 
$$
\lim_{a\rightarrow \infty}a^{-4}\cEb\big(\delta_a(T_{1,1/\sqrt{2}})\big) 
=-\pi^2\lim_{a\rightarrow
  \infty}\big(9\,\cI_{0,1}(a)-144\,\cI_{2,3}(a)\big)
=\frac{256\pi^2}{35}.  
$$
The result follows upon recalling that $\cEb=128\cE$.  
\end{proof}

\noindent
{\it Proof of Proposition~\ref{no}}.  
It was shown in \S\ref{prod} that in the three cases other 
than $S^2\times S^2\subset S^5$, $\cE$ is already unbounded above and below   
when restricted to the families considered there.  It was also shown in
\S\ref{prod} that $\cE$ is unbounded below when restricted to the
corresponding family for $S^2\times S^2\subset S^5$.  Proposition~\ref{a}
shows that $\cE$ is unbounded above over embeddings 
$S^2\times S^2\subset S^5$.
\stopthm

\subsection{Second Variation in $S^5$.}\label{2var}
In this section we calculate the second variation of $\cE$ at a minimal
immersed hypersurface $\Si$ in $S^5$.  We specialize the general formula to
$\Si=S^4$ and $\Si=S^2(1/\sqrt{2})\times S^2(1/\sqrt{2})$.  

Let $f:\Si\rightarrow S^5$ be a minimal immersion of $\Si^4$ in $S^5$, and 
$F:\Si\times (-\ep,\ep)$ be a variation of $f$, i.e. $F_0=f$.  Let 
$V=\pa_t F_t|_{t=0}$ denote the variational vector field.  We assume
throughout that $V$ is normal.   
Recall the first and second variations of area:  
\begin{align*}
\pa_tA\big(F_t(\Si)\big)|_{t=0}&=-\int_\Si\langle H,V\rangle\,da_\Si\\
\nabla_{\pa_t}H|_{t=0}&=-JV\\
\pa^2_tA\big(F_t(\Si)\big)|_{t=0}&=\int_\Si\langle JV,V\rangle\,da_\Si.
\end{align*}
Here $\nabla_{\pa_t}$ refers to the pullback connection on the pullback of
the normal bundle and $J=-\Delta -4-|L|^2$ is the 
Jacobi operator, where $\Delta$ denotes the normal bundle Laplacian.     
\begin{proposition}\label{2varE}
If $\Si$ is a minimal immersed hypersurface in $S^5$, then  
$$
\pa^2_t\cEb\big(F_t(\Si)\big)|_{t=0}=\int_\Si\langle \cJ V,V\rangle\,da_\Si
$$ 
where 
\begin{equation}\label{J}
\cJ=2J(J+4)(J+6).
\end{equation}
\end{proposition}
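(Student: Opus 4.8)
The plan is to start from the explicit formula for the energy and exploit that at a minimal hypersurface the mean curvature vanishes identically. For a hypersurface $\Si^4\subset S^5$ with a chosen unit normal $\nu$, formula \eqref{Esphere}, together with the scalar identification $|L^tH|^2 = |L|^2H^2$ noted in \S\ref{2var}, gives
\[
\cEb(\Si) = \int_\Si\Big(|\na H|^2 - |L|^2H^2 + \tfrac{7}{16}H^4 + 6H^2 + 48\Big)\,da_\Si .
\]
Applying this to the varying immersion $F_t$ and writing $\cEb(F_t(\Si)) = \int_\Si \rho_t\,da_\Si$, where $\rho_t$ is the integrand at $F_t(p)$ times the area Jacobian $\varphi_t$ of $F_t$, the key observation is that $\Si=\Si_0$ is minimal, so $H\equiv 0$ and hence $\na H\equiv 0$ on $\Si_0$; thus every term of the integrand except the constant $48$ is at least quadratic in the pair $(H,\na H)$, which vanishes at $t=0$.

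Now differentiate twice at $t=0$. Writing the integrand as $48 + A_t$ with $A_t$ collecting the remaining terms, we have $A_0\equiv 0$ and $\pa_t A_t|_{t=0}=0$, so $\pa_t^2(\rho_t)|_{t=0} = \pa_t^2 A_t|_{t=0} + 48\,\pa_t^2\varphi_t|_{t=0}$ (the cross term and the $A_0\,\pa_t^2\varphi_t$ term drop). The constant piece contributes $48\,\pa_t^2 A(F_t(\Si))|_{t=0} = 48\int_\Si\langle JV,V\rangle\,da_\Si$ by the second variation of area formula, valid since $\Si$ is minimal. For each remaining term, which has the shape $\int_{\Si_t}B_t(H_t,H_t)\,da_{\Si_t}$ with $B_t$ built from the induced metric, the normal metric and the normal connection, the Leibniz rule gives $\pa_t^2|_{t=0}\int_{\Si_t}B_t(H_t,H_t)\,da_{\Si_t} = 2\int_\Si B_0(\dot H,\dot H)\,da_\Si$, since all $t$-derivatives landing on $B_t$, on the connection, or on $da_{\Si_t}$ are annihilated by the factor $H_0=0$; similarly the quartic term $\tfrac{7}{16}H^4$ contributes nothing. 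Here $\dot H := \pa_t H_t|_{t=0}$, and from $\na_{\pa_t}H|_{t=0} = -JV$ together with $H_0=0$ one reads off $\dot H = -JV$ under the scalar identification of normal sections via $\nu$. Hence
\[
\pa_t^2\,\cEb(F_t(\Si))\big|_{t=0} = 2\int_\Si\Big(|\na(JV)|^2 - |L|^2(JV)^2 + 6(JV)^2\Big)\,da_\Si + 48\int_\Si\langle JV,V\rangle\,da_\Si .
\]

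Finally I would simplify into the product form of \eqref{J}. Rewrite $\int_\Si|\na(JV)|^2\,da_\Si = \int_\Si\langle(-\Delta)(JV),JV\rangle\,da_\Si = \int_\Si\langle(J+4+|L|^2)(JV),JV\rangle\,da_\Si$ using $-\Delta = J+4+|L|^2$; the $|L|^2$ contribution here cancels the $-|L|^2(JV)^2$ term, leaving
\[
2\int_\Si\langle(J+4)(JV),JV\rangle\,da_\Si + 12\int_\Si\langle JV,JV\rangle\,da_\Si + 48\int_\Si\langle JV,V\rangle\,da_\Si .
\]
Since $J$ is self-adjoint and commutes with polynomials in $J$, this equals $\int_\Si\big\langle\big(2J^2(J+4)+12J^2+48J\big)V,\,V\big\rangle\,da_\Si$, and then
\[
2J^2(J+4)+12J^2+48J = 2J\big(J^2+10J+24\big) = 2J(J+4)(J+6) = \cJ ,
\]
which is the claim. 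I expect no serious conceptual obstacle here; the only point needing care is the justification that, because $H$ vanishes identically on the minimal $\Si_0$, the second-order variations of the metric, normal connection, second fundamental form and area element are all irrelevant to the terms quadratic in $H$, so that only $\dot H = -JV$ enters — together with keeping the scalar identifications and signs straight.
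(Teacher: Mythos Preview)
Your proof is correct and follows essentially the same route as the paper's: both isolate the constant term $48$ to invoke the second variation of area, observe that the remaining terms are at least quadratic in $H$ so only $\dot H=-JV$ survives in the second derivative, and then use $-\Delta=J+4+|L|^2$ and self-adjointness of $J$ to factor the resulting operator as $2J(J+4)(J+6)$. Your writeup is slightly more explicit about the Leibniz-rule bookkeeping, but the argument is the same.
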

\begin{proof}
Recall that $\cE$ is given by \eqref{Esphere} and $\cEb=128\,\cE$.  
Since $H|_{t=0}=0$, the $|H|^4$ term does not contribute to the second
variation.  Since 
$|\nabla H|^2-|L|^2|H|^2 +6|H|^2$ vanishes to second order at
$t=0$, we have 
$$
\pa^2_t\cEb\big(F_t(\Si)\big)|_{t=0}
=\int_\Si\pa_t^2|_{t=0}\Big(|\nabla H|^2-|L|^2|H|^2 +6|H|^2 \Big)\,da_\Si
+48\int_\Si\langle J V,V\rangle\,da_\Si.
$$
Again using that $H|_{t=0}=0$, we have 
\[
\begin{split}
\pa_t^2|_{t=0}\Big(|\nabla H|^2-|L|^2|H|^2 +6|H|^2 \Big)
&=2|\nabla \nabla_{\pa_t}H|^2 -2|L|^2|\nabla_{\pa_t}H|^2
+12|\nabla_{\pa_t}H|^2\\
&=2\Big(|\nabla JV|^2-|L|^2|JV|^2+6|JV|^2\Big).
\end{split}
\]
So
\[
\begin{split}
\pa^2_t\cEb\big(F_t(\Si)\big)|_{t=0}&=
2\int_\Si \langle -\Delta JV -|L|^2JV +6JV +24V,JV\rangle\,da_\Si\\
&=2\int_\Si \langle (J+4) JV +6JV +24V,JV\rangle\,da_\Si\\
&=2\int_\Si \langle J(J+4)(J+6)V,V\rangle\,da_\Si.
\end{split}
\]
\end{proof}

Proposition~\ref{2varE} shows that the second variation of $\cEb$ is
determined by the spectral decomposition of the self-adjoint Jacobi
operator $J$.  We 
identify this for $\Si=S^4$ and $\Si=S^2(1/\sqrt{2})\times
S^2(1/\sqrt{2})$. 
The subsequent argument follows closely that of \cite{W}, where the second
variation of the classical Willmore energy at a minimal surface is 
identified.  We refer to \cite{W} for elaboration and proofs of some of the
statements which follow.    

Let $\cK$ denote the 15-dimensional space of Killing fields of $S^5$ and
let $\cK_\Si\subset \Gamma(N\Si)$ denote the space of normal projections 
of restrictions to $\Si$ of elements of $\cK$.  The kernel of 
the restriction-projection map $\cK\rightarrow \cK_\Si$ is the
space of Killing fields whose restriction to $\Si$ is everywhere tangent to 
$\Si$.  Its dimension equals the dimension of the space of isometries of  
$S^5$ which map $\Si$ to itself.  For $\Si=S^4$ this dimension is $10$,
while for $\Si=S^2(1/\sqrt{2})\times S^2(1/\sqrt{2})$ it is $6$.  So 
$\dim \cK_{S^4}=5$ and $\dim \cK_{S^2(1/\sqrt{2})\times 
  S^2(1/\sqrt{2})}=9$.  For any $\Si$, $\cK_{\Si}\subset \ker J$.  

Let $\cC$ denote the 6-dimensional space of tangential projections of
restrictions to $S^5$ of constant vector fields on $\R^6$, and let
$\cC_\Si\subset \Gamma(N\Si)$ 
denote the space of normal projections of restrictions to $\Si$ of elements 
of $\cC$.  Every element of $\cC$ is a conformal Killing field of $S^5$ and
the space of conformal Killing fields of $S^5$ equals $\cK\oplus \cC$.  
The dimension of $\cC_\Si$ is $1$ if $\Si$ is a totally geodesic
$S^4\subset S^5$, and is $6$ otherwise.  For any $\Si$, $\cC_\Si\subset
\ker(J+4)$.  

The space of conformal directions to $\Si$ is $\cK_\Si\oplus \cC_\Si\subset
\Gamma(N\Si)$.  These are in the kernel of $\cJ$ by conformal invariance;
this is consistent with \eqref{J} and the facts that $\cK_{\Si}\subset \ker
J$, $\cC_\Si\subset \ker(J+4)$.  

\bigskip
\noindent
{\it Proof of Proposition~\ref{2varS4}.}  
The normal bundle to $S^4\subset S^5$ has a parallel nonvanishing section.
So its space of sections can be identified with $C^\infty(S^4)$ and
the normal bundle Laplacian with the scalar Laplacian.  The eigenvalues of
$-\Delta$ are $j(j+3)= 0,4,10, \cdots$ with 
multiplicities $\binom{j+4}{4}-\binom{j+2}{4}=1,5,14,\cdots$.  So the 
eigenvalues of $J=-\Delta -4$ are $-4,0,6, \cdots$ with the same
multiplicities.   
By comparing dimensions we see that $\ker J=\cK_{S^4}$ and $\ker
(J+4)=\cC_{S^4}$.  So the kernel of $\cJ$ is exactly the conformal
directions.  Since all other eigenvalues of $J$ are positive, we 
conclude that $\cJ$ is positive transverse to the conformal directions.  
\stopthm

\begin{proposition}\label{2varS2}
The second variation of $\cE$ at $S^2(1/\sqrt{2})\times
S^2(1/\sqrt{2})\subset S^5$ has one  
negative eigendirection, the direction of the family $S^2(r_1)\times
S^2(r_2)$ considered in \S\ref{prod}.  It is positive in all  
eigendirections transverse to this direction and to the 
tangent space to the orbit of the conformal group. 
\end{proposition}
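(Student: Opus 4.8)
The plan is to follow the proof of Proposition~\ref{2varS4} and reduce everything to the spectrum of the Jacobi operator. By Proposition~\ref{2varE}, the second variation of $\cEb$ at the minimal hypersurface $\Si = S^2(1/\sqrt2)\times S^2(1/\sqrt2)$ is the quadratic form $V\mapsto \int_\Si\langle\cJ V,V\rangle\,da_\Si$ with $\cJ = 2J(J+4)(J+6)$, so the signature of the second variation is that of $\cJ$, which is a function of $J = -\Delta -4-|L|^2$. From \eqref{LS2} one reads off $|L|^2 = 2(r_1^2/r_2^2 + r_2^2/r_1^2) = 4$ at $r_1 = r_2 = 1/\sqrt2$, so here $J = -\Delta - 8$. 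Since the normal bundle of $\Si$ in $S^5$ is trivialized by the global unit normal $\nu$ of \eqref{normal}, sections of $N\Si$ are identified with $C^\infty(\Si)$ and the normal bundle Laplacian with the scalar Laplacian of the product metric.

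First I would write down this spectrum. The eigenvalues of $-\Delta$ on $S^2(1/\sqrt2)$ are $2\ell(\ell+1)$, $\ell\geq 0$, with multiplicity $2\ell+1$, so on the product they are $2\ell(\ell+1)+2m(m+1)$: the value $0$ with multiplicity $1$, the value $4$ with multiplicity $6$, the value $8$ with multiplicity $9$, and everything else at least $12$. Thus $J = -\Delta - 8$ has eigenvalues $-8$ (multiplicity $1$), $-4$ (multiplicity $6$), $0$ (multiplicity $9$), and the rest $\geq 4$. Now $\cJ = 2J(J+4)(J+6)$ and $\lambda\mapsto\lambda(\lambda+4)(\lambda+6)$ is negative exactly on $(-\infty,-6)\cup(-4,0)$; since the only $J$-eigenvalue below $-6$ is $-8$ (with multiplicity one) and no $J$-eigenvalue lies in $(-4,0)$, the operator $\cJ$ has exactly one negative eigendirection, with eigenvalue $2(-8)(-4)(-2) = -128$; its kernel is the sum of the $J$-eigenspaces for $\lambda = -4$ and $\lambda = 0$, of dimension $6+9 = 15$; and $\cJ$ is positive on the orthogonal complement (for $\lambda\geq 4$ one has $2\lambda(\lambda+4)(\lambda+6)>0$).

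It then remains to match these spaces with geometry. By the dimension facts recalled before Proposition~\ref{2varS4}, $\dim\cK_\Si = 9$ and $\dim\cC_\Si = 6$; since $\cK_\Si\subseteq\ker J$ and $\cC_\Si\subseteq\ker(J+4)$ and the target eigenspaces have dimensions $9$ and $6$, these inclusions are equalities, and as they are eigenspaces for distinct eigenvalues the sum $\cK_\Si\oplus\cC_\Si$ is direct of dimension $15 = \dim\ker\cJ$; hence $\ker\cJ$ is exactly the tangent space to the conformal orbit, consistent with conformal invariance. For the negative direction, it is spanned by the constant function, i.e.\ by $\nu$ under the trivialization; writing the family of \S\ref{prod} as $x = (\cos\alpha\,\om_1,\sin\alpha\,\om_2)$ with $\om_i\in S^2(1)$ and $r_1 = \cos\alpha$, $r_2 = \sin\alpha$, one computes $\pa_\alpha x = (-\tfrac{r_2}{r_1}y_1,\tfrac{r_1}{r_2}y_2) = \nu$, so the normal variational field of that family is precisely the negative eigendirection. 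Combining these identifications with the spectral picture gives the proposition.

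The computations are all elementary; the one point needing care is the kernel count, where one must use the stated values $\dim\cK_\Si = 9$ and $\dim\cC_\Si = 6$ to conclude that the obvious inclusions into $\ker J$ and $\ker(J+4)$ are equalities, so that $\cJ$ annihilates nothing beyond the conformal directions.
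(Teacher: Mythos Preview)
Your proof is correct and follows essentially the same approach as the paper: reduce to the spectrum of $J=-\Delta-8$ via Proposition~\ref{2varE}, compute the product spectrum on $S^2(1/\sqrt2)\times S^2(1/\sqrt2)$, and match the $J$-eigenspaces for $\lambda=0,-4,-8$ with $\cK_\Si$, $\cC_\Si$, and the constant direction $\nu$ by dimension count. Your explicit sign analysis of $\lambda(\lambda+4)(\lambda+6)$ and the parametrization $x=(\cos\alpha\,\omega_1,\sin\alpha\,\omega_2)$ to identify $\partial_\alpha x=\nu$ are slightly more detailed than the paper's argument, but the content is the same.
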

\begin{proof}
The normal bundle to $\Si=S^2(1/\sqrt{2})\times S^2(1/\sqrt{2})\subset S^5$ 
has a parallel nonvanishing section given by \eqref{normal} with
$r_1=r_2$.  So its Laplacian can be identified with two copies of the
scalar Laplacian of $S^2(1/\sqrt{2})$.  The eigenvalues of
$-\Delta_{S^2(1/\sqrt{2})}$ are $2j(j+1)=0,4,12, \cdots$ with  
multiplicities $\binom{j+2}{2}-\binom{j}{2}=2j+1=1,3,5,\cdots$.  So the
eigenvalues of $-\Delta_\Si$ are
$0,4,8,12,\cdots$ with multiplicities $1,6,9,10,\cdots$.  We have $|L|^2=4$
from \eqref{LS2}, so $J=-\Delta-8$.  Hence the eigenvalues of $J$ are
$-8,-4,0,4,\cdots$ with multiplicities $1,6,9,10,\cdots$.  Comparing
dimensions shows that $\ker J =\cK_\Si$ and $\ker(J+4)=\cC_\Si$.  Thus
$\cJ$ has exactly one negative direction, its kernel consists precisely of
the conformal directions, and there is a complementary space to these on
which $\cJ$ is positive.  The $-8$ eigenspace of $J$ is spanned
by constant multiples of $\nu$ from \eqref{normal}.  This is the
variation field of the family $S^2(r_1)\times S^2(r_2)$ analyzed in
\S\ref{prod}.  We noted there that the family had a local maximum at
$r_1=r_2=1/\sqrt{2}$ and it is easily seen from \eqref{tform} that the
second derivative at this maximum is negative.  So for 
$S^2(1/\sqrt{2})\times S^2(1/\sqrt{2})$, this is the only
eigendirection in which $\cE$ decreases.   
\end{proof}

\subsection{Other Energies}\label{otherenergies}
Other conformally invariant energies of $\Si^4\subset
M^n$ can be constructed by adding to $\cE$ conformally invariant
expressions.  The trace-free 
part $\mathring{L}$ scales upon conformal transformation of the metric, so 
$\int_\Si p(\Lo)\,da_\Si$ is conformally invariant for any quartic scalar 
contraction $p(\Lo)$ of the trace-free second fundamental form.  The
following proposition shows that upon 
adding appropriate multiples of $|\Lo|^4$, one obtains non-negative  
energies.  Recall that $\cEb=128\cE$ with $\cE$ given by 
\eqref{Eeuclidean} when $M$ is a Euclidean space.
\begin{proposition}
Suppose $(M,g)$ is $\R^n$ with the Euclidean metric.  If $\beta\geq
\frac43$, then the energy 
\[
\cEb+\beta \int_\Si |\Lo|^4da_\Si
\]
is non-negative.  
\end{proposition}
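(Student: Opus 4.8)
The plan is to show the integrand of $\cEb+\beta\int_\Si|\Lo|^4\,da_\Si$ is nonnegative at every point of $\Si$; since $|\na H|^2\ge0$, it then suffices to prove the pointwise inequality
\[
-|L^tH|^2+\tfrac{7}{16}|H|^4+\beta|\Lo|^4\ge0 .
\]

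First I would split off the trace of the second fundamental form, writing $L_{\al\be}^{\al'}=\Lo_{\al\be}^{\al'}+\tfrac14 g_{\al\be}H^{\al'}$, where the factor $\tfrac14$ reflects $k=\dim\Si=4$. Substituting this into $L_{\al\be}^{\al'}L^{\al\be}_{\be'}$, the cross terms vanish because $\Lo$ is trace-free, and $g^{\al\be}g_{\al\be}=4$ collapses the remaining term, giving
\[
L_{\al\be}^{\al'}L^{\al\be}_{\be'}=\Lo_{\al\be}^{\al'}\Lo^{\al\be}_{\be'}+\tfrac14 H^{\al'}H_{\be'} .
\]
Contracting with $H_{\al'}H^{\be'}$ yields $|L^tH|^2=|\Lo^tH|^2+\tfrac14|H|^4$, where $|\Lo^tH|^2:=\Lo_{\al\be\al'}\Lo^{\al\be}_{\be'}H^{\al'}H^{\be'}\ge0$, and hence $-|L^tH|^2+\tfrac{7}{16}|H|^4=-|\Lo^tH|^2+\tfrac{3}{16}|H|^4$.

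Next I would bound $|\Lo^tH|^2\le|\Lo|^2|H|^2$: the symmetric bilinear form $(\xi,\eta)\mapsto\Lo_{\al\be\al'}\Lo^{\al\be}_{\be'}\xi^{\al'}\eta^{\be'}$ on $N\Si$ is positive semidefinite with trace $|\Lo|^2$, so its value on $(H,H)$ is at most $|\Lo|^2|H|^2$ (Cauchy--Schwarz in the normal index). Putting the two steps together, the integrand is $\ge\beta|\Lo|^4-|\Lo|^2|H|^2+\tfrac{3}{16}|H|^4$. With $x=|\Lo|^2\ge0$ and $y=|H|^2\ge0$ this is $\beta x^2-xy+\tfrac{3}{16}y^2$, a quadratic in $x$ with positive leading coefficient and discriminant $y^2\bigl(1-\tfrac34\beta\bigr)$; this is $\le0$ precisely when $\beta\ge\tfrac43$, in which case the expression is nonnegative for all $x,y\ge0$. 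Integrating over $\Si$ completes the proof.

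I do not expect a real obstacle here: the argument is entirely elementary, the only places requiring care being the trace-decomposition bookkeeping (where $k=4$ is used) and the direction of the Cauchy--Schwarz estimate. It is worth noting that $\beta=\tfrac43$ is the exact threshold produced by this argument: there $\beta x^2-xy+\tfrac{3}{16}y^2=\tfrac43\bigl(x-\tfrac38 y\bigr)^2$, so the pointwise bound can degenerate only where $\na H=0$, $4|\Lo|^2=3|H|^2$, and $|\Lo^tH|^2=|\Lo|^2|H|^2$.
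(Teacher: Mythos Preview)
Your proof is correct and follows essentially the same route as the paper: decompose $L=\Lo+\tfrac14 Hg$ to rewrite the integrand as $|\nabla H|^2-|\Lo^tH|^2+\tfrac{3}{16}|H|^4$, then reduce to the nonnegativity of the quadratic $\beta x^2-xy+\tfrac{3}{16}y^2$ for $\beta\ge\tfrac43$. You are more explicit than the paper in justifying the intermediate bound $|\Lo^tH|^2\le|\Lo|^2|H|^2$, which the paper uses tacitly.
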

\begin{proof}
Decomposing 
$L_{\al\be}^{\al'}=\Lo_{\al\be}^{\al'}+\frac14 H^{\al'}g_{\al\be}$ 
gives  
\begin{equation}\label{ELo}
\cEb=\int_\Si \Big( |\nabla H|^2 
-|\Lo^t H|^2 +\frac{3}{16}|H|^4 \Big) 
\,da_\Si.
\end{equation}
Now $\beta |\Lo|^4 -|\Lo^t H|^2 +\frac{3}{16}|H|^4 \geq 0$
since the quadratic form  $\beta x^2 -xy +\frac{3}{16}y^2$ is nonnegative 
for $\beta \geq \frac43$.
\end{proof}

For the subsequent discussion, we denote by $\Wb$ and $\Pb$ the Weyl and 
Schouten tensors of the induced metric on $\Si$.  If $\dim\Si =4$, then
$\int_\Si |\Wb|^2da_\Si$ is conformally invariant.  The Chern-Gauss-Bonnet 
formula in dimension $4$ states
\begin{equation}\label{CGB}
32\pi^2\chi(\Si)=
\int_\Si \big[|\Wb|^2+16\sigma_2(\Pb)\big]\,da_\Si,
\end{equation}
where 
$\sigma_2(\Pb) = \frac12\big((\tr \Pb)^2-|\Pb|^2\big)$ is the second 
elementary symmetric function of the eigenvalues of the Schouten tensor.  
In particular, $\int_\Si \sigma_2(\Pb)\,da_\Si$ is also conformally
invariant.  So further  
conformally invariant energies can be obtained by adding a linear
combination of the integrals of $|\Wb|^2$ and $\sigma_2(\Pb)$.  
The Gauss equation can be used to express $\Wb$ and $\Pb$ in 
terms of $L$ and curvature of $g$.  

In \cite{V}, Vyatkin has used tractor calculus to derive a conformally
invariant energy for $4$-dimensional hypersurfaces in conformally
flat 5-manifolds.  By Theorem 5.2.4 and Lemma 5.2.7 of \cite{V}, his energy
takes the form $\cV=\int_\Si V\,da_\Si$, where    
\[
V = -\tfrac23 \Lo^{\al\be}\na_\al\na^\ga \Lo_{\ga\be}  
-4\Pb_\al{}^\ga \Lo^{\al\be}\Lo_{\ga\be}
+2\Pb_\al{}^\al |\Lo|^2
-\tfrac29 \na^\be\Lo_\be{}^\al \na^\ga \Lo_{\al\ga}.   
\]
Vyatkin's energy can be related to $\cE$ in the case that $M$ is
$5$-dimensional Euclidean space as follows.  The contracted
Codazzi-Mainardi equation gives $\na^\al \Lo_{\al\be}=\frac34 \na_\be H$.   
Integrating by parts one of the derivatives in the first term and then 
combining the first and last terms shows that  
\begin{equation}\label{Vform}
\cV = \int_\Si \Big( \tfrac14 |\nabla H|^2 -4\Pb_\al{}^\ga
\Lo^{\al\be}\Lo_{\ga\be} +2\Pb_\al{}^\al |\Lo|^2\Big)\,da_\Si.
\end{equation}
The intrinsic Schouten tensor $\Pb$ can be expressed as a linear
combination of quadratic terms in $L$ via the Gauss equation.  Doing so,
substituting and simplifying gives
\begin{equation}\label{Vterms}
-4\Pb_\al{}^\ga \Lo^{\al\be}\Lo_{\ga\be} +2\Pb_\al{}^\al |\Lo|^2
=2\tr \Lo^4 -\tfrac23 |\Lo|^4-H\tr\Lo^3 +\tfrac18 H^2|\Lo|^2.
\end{equation}
Similar calculations express $|\Wb|^2$ and $\sigma_2(\Pb)$ in terms of $L$:  
\begin{gather}\label{sigma2W}
\begin{gathered}
|\Wb|^2=\tfrac73 |\Lo|^4-4\tr \Lo^4\\
4\sigma_2(\Pb)=\tr\Lo^4-\tfrac13 |\Lo|^4 -H\tr\Lo^3
+\tfrac38 H^2 |\Lo|^2-\tfrac{3}{64} H^4.
\end{gathered}
\end{gather}
Combining \eqref{ELo}, \eqref{CGB}, \eqref{Vform}, \eqref{Vterms}, and
\eqref{sigma2W} gives 
\[
\cV= \tfrac14 \cEb +8\pi^2\chi(\Si)+ \int_\Si \Big(2\tr\Lo^4-\tfrac{11}{12} 
|\Lo|^4\Big) \,da_\Si.
\]

\end{document}